\newtheorem{theorem}{Theorem}[section]
\newtheorem{corollary}{Corollary}[section]
\newtheorem{lemma}{Lemma}[section]
\theoremstyle{definition}
\newtheorem{definition}{Definition}[section]
\newtheorem{remark}{Remark}[section]
\def\IME{\emph{In\-su\-ran\-ce: Ma\-the\-ma\-tics and Eco\-no\-mics\/}}
\def\SAJ{\emph{Scan\-di\-na\-vian Ac\-tua\-rial Journal\/}}
\def\LMJ{\emph{Lith. Math. J.\/}}
\def\TPA{\emph{Theory Probab. Appl.}}
\def\AAP{\emph{Ad\-van\-ces in Ap\-pli\-ed Pro\-ba\-bi\-li\-ty\/}}
\def\JASA{\emph{Journal of the American Statistical Association\/}}
\numberwithin{equation}{section}
\def\eqOK{=}
\newcommand{\A}{\mathcal{A}}
\newcommand{\B}{\mathcal{B}}
\newcommand{\Mi}{\EuScript{M}}
\newcommand{\BesselI}[1]{I_{#1}}
\newcommand{\BesselK}[1]{K_{#1}}
\newcommand{\muIG}{\mu}
\newcommand{\lambdaIG}{\lambda}
\newcommand{\Int}[2]{{\mathcal{#2}}_{#1}}
\def\mmu{\hat{\mu}}
\newcommand{\OneVar}{\mathcal{X}}
\newcommand{\TwoVar}{\mathcal{V}}
\newcommand{\MainApprox}[1]{\mathcal{A}_{#1}}
\newcommand{\expaN}[1]{\mathcal{E}_{#1}}
\newcommand{\CoR}[1]{\mathcal{B}_{#1}}
\newcommand{\Elem}[2]{\mathcal{I}^{[#2]}_{#1}}
\newcommand{\KonstF}{C_{\mathcal{F}}\,}
\newcommand{\KonstS}{C_{\mathcal{S}}\,}
\newcommand{\MainRemTerm}[1]{\mathcal{R}_{#1}}
\def\cS{c^{*}}
\newcommand{\probR}[1]{\boldsymbol{\psi}_{#1}}
\def\paramT{\lambda}
\def\paramY{\mu}
\newcommand{\timeR}{\Upsilon}
\renewcommand{\P}{\mathsf{P}}
\newcommand{\p}{\mathsf{p}}
\newcommand{\R}{\mathsf{R}}
\newcommand{\D}{\mathsf{D}}
\newcommand{\E}{\mathsf{E}}
\newcommand{\EnOne}{N_{\epsilon}}
\newcommand{\Y}[1]{Y_{#1}}
\newcommand{\T}[1]{T_{#1}}
\newcommand{\UGauss}[2]{\varPhi_{\left({#1},{#2}\right)}}
\newcommand{\Ugauss}[2]{\varphi_{\left({#1},{#2}\right)}}
\newcommand{\homN}[1]{N_{#1}}
\newcommand{\homV}[1]{V_{#1}}
\newcommand{\homR}[1]{R_{#1}}
\begin{document}
\author[Vsevolod K. Malinovskii]{\Large Vsevolod K. Malinovskii}

\keywords{Time of first level crossing, Renewal processes, Generalized inverse
Gaussian distributions.}

\address{Central Economics and Mathematics Institute (CEMI) of Russian Academy of Science,
117418, Nakhimovskiy prosp., 47, Moscow, Russia}

\email{malinov@orc.ru, malinov@mi.ras.ru}

\urladdr{http://www.actlab.ru}

\title[GENERALIZED INVERSE GAUSSIAN DISTRIBUTIONS AND FIRST LEVEL CROSSING]{GENERALIZED
INVERSE GAUSSIAN DISTRIBUTIONS AND THE TIME OF FIRST LEVEL CROSSING}

\maketitle

\begin{abstract}
We propose a new approximation for the distribution of the time of the first
crossing of a high level $u$ by random process $\homV{s}-cs$, where $\homV{s}$,
$s>0$, is compound renewal process and $c>0$. It significantly outperforms the
existing approximations, particularly in the region around the critical point
$c=\cS$ which separates processes with positive and negative drifts. This
approximation is tightly related to generalized inverse Gaussian distributions.
\end{abstract}

\section{Introduction}\label{sdrghrtjkf}

Inverse Gaussian distribution (see \citeNP{[Jorgensen 1982]}, \citeNP{[Chhikara
1989]}, \citeNP{[Seshadri 1999]}) has probability density function (p.d.f.)
\begin{equation}\label{wqrdtgrehr}
f\big(x;\muIG,\lambdaIG,-\tfrac{1}{2}\big)=
\frac{\lambdaIG^{1/2}}{\sqrt{2\pi}}\,x^{-3/2}\exp\Big\{-\frac{\lambdaIG(x-\muIG)^2}{2\muIG^2
x}\Big\},
\end{equation}
where $x$, $\lambdaIG$, and $\muIG$ are positive\footnote{Parameter
$\lambdaIG>0$ is called shape parameter, and $\muIG>0$ is called mean
parameter.}. It is ``inverse'' in that sense that while Gaussian distribution
describes a Brownian motion's position at a fixed time, the inverse Gaussian
distribution describes the distribution of the time a Brownian motion with
positive drift takes to reach a fixed positive level.

Inverse Gaussian distribution has attracted a lot of researchers' interest.
\citeNP{[Seshadri 1997]} (see also \citeNP{[Morlat 1956]}) attributes its
invention to \citeNP{[Halpen 1941]}. Furthermore, \citeNP{[Chaudry 2002]}, with
reference to \citeNP{[Jorgensen 1982]}, attribute the invention of generalized
inverse Gaussian distribution to \citeNP{[Good 1953]}.

In the study of this distribution, paramount is finding explicit expression
\begin{multline*}
F\big(x;\muIG,\lambdaIG,-\tfrac{1}{2}\big)=\int_{0}^{x}f\big(z;\muIG,\lambdaIG,-\tfrac{1}{2}\big)dz
\\
\eqOK\UGauss{0}{1}\bigg(\sqrt{\frac{\lambdaIG}{x}}\bigg(\frac{x}{\muIG}-1\bigg)\bigg)
+\exp\bigg\{\frac{2\lambdaIG}{\muIG}\bigg\}\,\UGauss{0}{1}
\bigg(-\sqrt{\frac{\lambdaIG}{x}}\bigg(\frac{x}{\muIG}+1\bigg)\bigg)
\end{multline*}
for cumulative distribution function (c.d.f.) corresponding to p.d.f.
\eqref{wqrdtgrehr}; by $\UGauss{0}{1}(x)$ we denote c.d.f. of a standard normal
distribution\footnote{In Section 2.5 of the book \citeNP{[Chhikara 1989]}, the
authors say that \citeNP{[Shuster 1968]} expressed the cumulative distribution
function of the inverse Gaussian distribution in terms of a standard normal
distribution function, and that his proof is fairly complex and tedious. They
mention also \citeNP{[Zigangirov 1962]}. They give their own, rather
artificial, proof published in \citeNP{[Chhikara 1974]}.}. It seems that,
without pronouncing its present name, inverse Gaussian distribution was just
studied in \citeNP{[Binet 1841]}: this work is devoted to calculation of the
integrals like $\int_{0}^{x}f\big(z;\muIG,\lambdaIG,-\tfrac{1}{2}\big)dz$. The
same priority remark, as it seems, is applicable to a series of works devoted
to generalized incomplete Gamma function (see, e.g., \citeNP{[Chaudry 2001]},
\citeNP{[Chaudry 2002]}).

In this paper, instead of Brownian motion, we are focused on the random process
$\homV{s}-cs$, where $\homN{s}=\max\left\{n>0:\sum_{i=1}^{n}\T{i}\leqslant
s\right\}$, or $0$, if $\T{1}>s$, is renewal, and
$\homV{s}=\sum_{i=1}^{\homN{s}}\Y{i}$ or $0$, if $\homN{s}=0$ (or $\T{1}>s$),
is compound renewal processes. In risk theory, $\homV{s}$ and $\homN{s}$ are
called aggregate claim payout and claim arrival processes respectively. This
setting is important in various other fields of applied probability (see, e.g.,
\S~22 in \citeNP{[Takacs 1967]} for random walks with random displacements).

Put $\homR{s}=u+cs-\homV{s}$, $s\geqslant 0$. In risk theory, it is called risk
reserve process. We will show that inverse Gaussian and generalized inverse
Gaussian distributions play a paramount role in approximating
$\P\{\timeR\leqslant t\}$, where $\timeR=\inf\left\{s>0:\homR{s}<0\right\}$, or
$+\infty$, as $\homR{s}\geqslant 0$ for all $s>0$. It is the time of first
crossing of level $u$ by the process $\homV{s}-cs$. In risk theory, $\timeR$ is
called time of the first ruin, and $\P\{\timeR\leqslant
t\}=\probR{t}(u,c)=\P\left\{\inf_{0<s\leqslant t}\homR{s}<0\right\}$ is called
probability of ruin within time $t$.

Using associated random walks\footnote{In the random walk or risk theoretic
context, to pass to the associated random walk is considered basic technique
originating with Cram{\'e}er (see, e.g., Feller (1971), ch.~ XII, \S~4).} and
ladder technique, the approximations of $\probR{t}(u,c)$, as $u\to\infty$, were
investigated in \citeNP{[von Bahr 1974]} and in \citeNP{[Malinovskii 1994]}. In
\citeNP{[Malinovskii 2000]}, it was shown that this technique has limited
applicability for $c$ approaching $\cS$, as $u\to\infty$. The reasons for it,
deeply connected with the essence of this technique, were discussed in
\citeNP{[Malinovskii Kosova 2014]}. The present paper is a development of
\citeNP{[Malinovskii 2017]}, where more detailed discussion of the novelty of
our method is held, and more references are given.

\section{Approximation for distribution of the time of first level crossing}\label{wdrtfhmn}

Further in this paper, by $f_{\T{1}}(t)$, $f_{T}(t)$ and $f_{Y}(t)$ we denote
p.d.f. of the distribution of first time interval $\T{1}$, i.e., time between
starting time zero and time of the first event, of subsequent time intervals
$\T{i}\overset{d}{=}T$, $i=2,3,\dots$, and of jump sizes
$\Y{i}\overset{d}{=}Y$, $i=1,2,\dots$.
Being within renewal model, all these random variables are assumed mutually
independent.

Denote by $\P\{v<\timeR\leqslant t\mid\T{1}=v\}$ the distribution of $\timeR$
conditioned by $\T{1}=v$. It is easily seen that for $0<v<t$
\begin{equation}\label{ewrktulertye}
\P\{\timeR\leqslant
t\}=\int_{0}^{t}\P\{u+cv-\Y{1}<0\}f_{\T{1}}(v)dv+\int_{0}^{t}\P\{v<\timeR\leqslant
t\mid\T{1}=v\}f_{\T{1}}(v)dv.
\end{equation}
Put $M={\E{T}}/{\E{Y}}$, $D^2=((\E{T})^2\D{Y}+(\E{Y})^2\D{T})/(\E{Y})^3$, write
$\Ugauss{m}{s^2}$ for p.d.f. of a normal distribution with mean $m$ and
variance $s^2$, and introduce
\begin{equation}\label{wsdrthyjkr}
\expaN{t}(u,c,v)=\Int{t}{M}(u,c,v)+\KonstF\Int{t}{F}(u,c,v)+\KonstS\Int{t}{S}(u,c,v),
\end{equation}
where
\begin{equation}\label{wqerthrrthj}
\begin{aligned}
\Int{t}{M}(u,c,v)&=\int_{0}^{\frac{c(t-v)}{u+cv}}\frac{1}{1+x}
\,\Ugauss{cM(1+x)}{\frac{c^2D^2(1+x)}{u+cv}}(x)dx,
\\
\Int{t}{F}(u,c,v)&=\int_{0}^{\frac{c(t-v)}{u+cv}}\frac{x-Mc(1+x)}{(1+x)^2}
\Ugauss{cM(1+x)}{\frac{c^2D^2(1+x)}{u+cv}}(x)dx,
\\
\Int{t}{S}(u,c,v)&=\frac{u+cv}{c^2D^2}\int_{0}^{\frac{c(t-v)}{u+cv}}\frac{(x-Mc(1+x))^3}{(1+x)^3}
\Ugauss{cM(1+x)}{\frac{c^2D^2(1+x)}{u+cv}}(x)dx
\end{aligned}
\end{equation}
and\footnote{Here $\D{Y}=\E(Y-\E{Y})^2$, $\D{T}=\E(T-\E{T})^2$.}
\begin{equation*}
\begin{aligned}
\KonstF&\eqOK\frac{\E(T-\E{T})^3}{2cD^2\D{T}}\bigg(\dfrac{(\E{T})^2\D{Y}}{D^2(\E{Y})^3}-1\bigg)-\frac{\E{T}\E(Y-\E{Y})^3}{2cD^2\E{Y}\D{Y}}
\bigg(\dfrac{\D{T}}{D^2\E{Y}}-1\bigg)+\frac{\E{T}}{2cD^2},
\\[8pt]
\KonstS&\eqOK\frac{\E(T-\E{T})^3}{6cD^4\E{Y}}
-\dfrac{(\E{T})^3\E(Y-\E{Y})^3}{6cD^4(\E{Y})^4}
+\frac{\E{T}\D{Y}}{2cD^2(\E{Y})^2}.
\end{aligned}
\end{equation*}

\begin{figure}[t]
\includegraphics[scale=0.8]{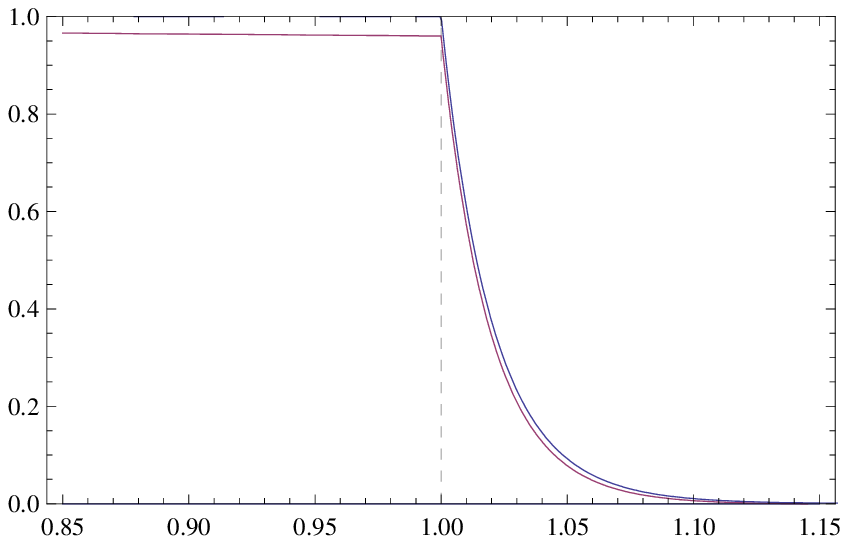}
\includegraphics[scale=0.8]{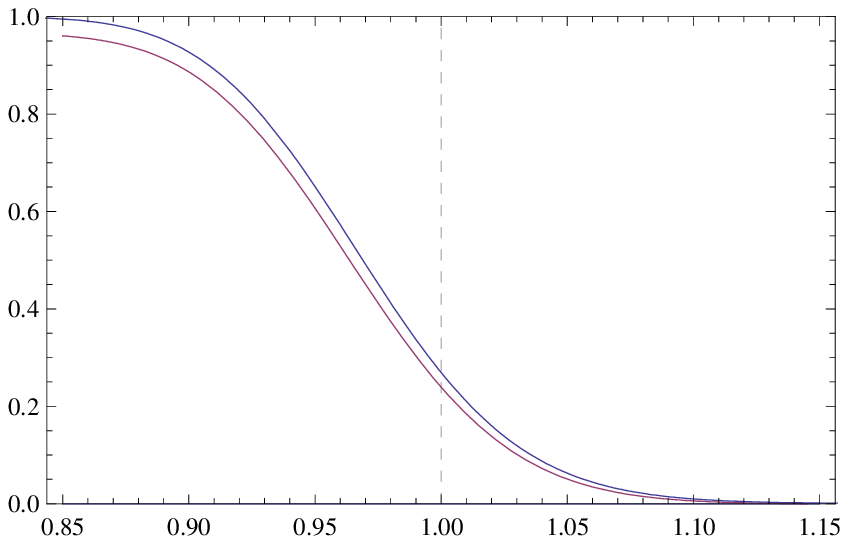}
\caption{\small Graphs ($X$-axis is $c$) of the functions
$\P\{v<\timeR\leqslant t\mid \T{1}=v\}$ (blue) given in Theorem~\ref{dthyjrt}
and $\expaN{t}(u,c,v)$ (red) defined in equation \eqref{wsdrthyjkr}, for
exponential $T$ with parameter $\paramT$ and exponential $Y$ with parameter
$\paramY$, as $\paramT=\paramY=1$, $v=0$, $u=50$, $t=\infty$ (above), $t=1000$
(below).}\label{ertgerhrh}
\end{figure}

\begin{theorem}\label{srdthjrf}
In the above model, let p.d.f. $f_{T}(y)$ and $f_{Y}(y)$ be bounded from above
by a finite constant, $D^2>0$, $\E({T}^4)<\infty$, $\E({Y}^4)<\infty$. Then for
$c>0$, for fixed $0<v<t$ we have
\begin{equation}\label{werhrjet}
\sup_{t>v}\Big|\,\P\{v<\timeR\leqslant t\mid\T{1}=v\}-\expaN{t}(u,c,v)\Big|
=\underline{O}\bigg(\frac{\ln(u+cv)}{(u+cv)^2}\bigg),
\end{equation}
as $u+cv\to\infty$.
\end{theorem}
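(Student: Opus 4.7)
The plan is to decompose $\P\{v<\timeR\leqslant t\mid\T{1}=v\}$ by the index of the jump at which ruin occurs, apply a bivariate local limit theorem with two Edgeworth corrections to the joint density of partial sums of interarrivals and claims, and recast the resulting sum as the integral form of $\expaN{t}(u,c,v)$.

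Since $\homR{s}$ is strictly increasing between jumps, ruin at a time greater than $v$ must happen at one of the epochs $\tau_{n}=v+\T{2}+\cdots+\T{n}$ with $n\geqslant 2$, so conditionally on $\T{1}=v$ one can write
\[
\P\{v<\timeR\leqslant t\mid\T{1}=v\}=\sum_{n\geqslant 2}\P\{\tau_{n}\leqslant t,\,\Y{1}+\cdots+\Y{n}>u+c\tau_{n},\,\homR{\tau_{k}}\geqslant 0\text{ for }2\leqslant k<n\mid\T{1}=v\}.
\]
A Cram\'er-type ladder-height estimate, of the kind used in \citeNP{[von Bahr 1974]} and \citeNP{[Malinovskii 1994]}, shows that removing the ``no earlier crossing'' restriction perturbs this sum by at most $\underline{O}((u+cv)^{-2})$ uniformly in $t$. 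Each unrestricted term then depends only on the bivariate vector $(\T{2}+\cdots+\T{n},\Y{1}+\cdots+\Y{n})$, whose joint density admits a bivariate local Edgeworth expansion: boundedness of $f_{T}$, $f_{Y}$ together with $\E(T^{4})<\infty$ and $\E(Y^{4})<\infty$ give the joint Gaussian density plus two polynomial corrections assembled from the third cumulants of $T$ and $Y$, with a uniform remainder of order $n^{-3/2}$.

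Next, I convert the sum over $n$ into an integral via the substitution $x=c(\tau_{n}-v)/(u+cv)$. The prefactor $1/(1+x)$ appearing in $\Int{t}{M}$ is then the discrete jump rate $n/(u+c\tau_{n})$ after rescaling, while the $x$-dependent mean $cM(1+x)$ and variance $c^{2}D^{2}(1+x)/(u+cv)$ encode the cumulative drift and variance of the bivariate Gaussian evaluated at the rescaled ruin time. Collecting the combinatorial coefficients that accompany the third-cumulant corrections, one recognises the weights $\KonstF$ and $\KonstS$, so the three pieces of the Edgeworth expansion align with $\Int{t}{M}$, $\KonstF\Int{t}{F}$ and $\KonstS\Int{t}{S}$ of \eqref{wqerthrrthj}; the factor $(u+cv)/(c^{2}D^{2})$ in $\Int{t}{S}$ comes from normalising the cubic Hermite term back to the unscaled coordinate.

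The main obstacle, I expect, is to propagate the error uniformly in $t$ and, more delicately, uniformly in $c$ across the critical value $\cS$. The logarithm in the bound arises from the Euler--Maclaurin correction that appears when converting the sum into an integral over a range of length $\sim(u+cv)/c$, combined with a harmonic-type tail of the Edgeworth remainder close to the Gaussian mode $x\approx Mc/(1-Mc)$. The classical ladder-height arguments degenerate near $c=\cS$, so the approach here sidesteps that obstruction by phrasing the expansion entirely in the local time coordinate $x$, which remains well-behaved uniformly in $c>0$ at the price of the additional logarithmic factor in the final bound $\underline{O}(\ln(u+cv)/(u+cv)^{2})$.
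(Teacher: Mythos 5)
There is a genuine gap, and it sits exactly at the step your whole argument rests on: the removal of the ``no earlier crossing'' restriction. Dropping that restriction replaces $\P\{v<\timeR\leqslant t\mid\T{1}=v\}$ by the expected number of claim epochs at which the reserve is negative, and this differs from the first-crossing probability at leading order, not by $\underline{O}((u+cv)^{-2})$: once the path goes below zero it is typically below zero at several subsequent epochs (for $c<\cS$ at infinitely many), so the unrestricted sum overcounts by a factor that does not vanish as $u+cv\to\infty$ and that blows up as $c\downarrow\cS$. Moreover, the tool you invoke to control this — a Cram\'er-type ladder-height estimate in the spirit of von Bahr (1974) and Malinovskii (1994) — is precisely the technique whose non-uniformity near $c=\cS$ motivates the paper; your closing remark that the method ``sidesteps'' the ladder obstruction contradicts your own first step. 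The paper avoids the issue entirely by starting from the exact Kendall-type identity
\begin{equation*}
\P\{v<\timeR\leqslant t\mid\T{1}=v\}=\int_{v}^{t}\frac{u+cv}{u+cz}\,\sum_{n=1}^{\infty}
\P\big\{M(u+cz)=n\big\}f_{T}^{*n}(z-v)\,dz,\qquad M(s)=\inf\Big\{k\geqslant1:\sum_{i=1}^{k}Y_i>s\Big\}-1,
\end{equation*}
in which the weight $(u+cv)/(u+cz)$ accounts for the first-passage condition exactly, for every $c>0$; only marginal (unrestricted) quantities then remain, and the 2D local Edgeworth expansion (Dubinskaite's theorem with non-uniform remainder), the change of variables $x=cy/(u+cv)$, and the sum-to-integral approximation are applied to this exact representation. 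Without this identity (or a genuine substitute), your decomposition cannot reach the stated error rate.

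A secondary but real omission: even granting the unrestricted reduction, tracking only the third-cumulant Edgeworth corrections cannot reproduce $\KonstF$. In the paper's proof the representation $\P\{M(u+cz)=n\}=\int_{0}^{u+cz}f_Y^{*n}(u+cz-z')\P\{Y>z'\}dz'$ introduces the defect $z'$, and its Taylor expansion (Lemmas~\ref{dfgfmgmh}--\ref{sdrtfhfgjmf}) contributes $\E(Y^2)$-terms which, via $\E(Y^{2})=\D{Y}+(\E{Y})^{2}$, produce the summands $\frac{\E{T}\D{Y}}{2cD^2(\E{Y})^2}+\frac{\E{T}}{2cD^2}$ of $\KonstF$; these are not third-cumulant quantities, so your scheme as described would assemble the wrong first-order coefficient even before the error analysis. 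The attribution of the logarithmic factor to Euler--Maclaurin corrections is also speculative; in the paper it arises from the estimation of the residual terms carried over from Malinovskii (2017).
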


Denote by\footnote{See e.g. \citeNP{[Abramowitz Stegun 1972]}, or
\citeNP{[Watson 1945]}, or Chapter XVII, Section 17.7 in \citeNP{[Whittaker
Watson 1963]}.} $\BesselI{1}(z)$ the modified Bessel function of the first kind
of order $1$.

\begin{theorem}\label{dthyjrt}
Assuming that $T$ and $Y$ are exponential with parameters $\paramT>0$ and
$\paramY>0$ respectively, for $0<v<t$ we have
\begin{multline*}
\P\{v<\timeR\leqslant t\mid \T{1}=v\}=\sqrt{\paramY\paramT
c}\,(v+u/c)e^{-\paramY u}e^{-\paramY cv}
\\
\times\int_{0}^{t-v}\frac{\BesselI{1}(2\sqrt{\paramY\paramT
c(y+v+u/c)y})}{\sqrt{(y+v+u/c)y}} e^{-(\paramY c+\paramT)y}dy.
\end{multline*}
\end{theorem}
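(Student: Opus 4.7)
The plan is to condition on the pair $(T_{1},Y_{1})$. Given $T_{1}=v$ and $Y_{1}=y_{1}$, the event $\{v<\timeR\le t\}$ requires $y_{1}\le W:=u+cv$ (else ruin occurs at time $v$), and then by the strong Markov property the post-$v$ process is a standard Cram\'er--Lundberg process with exponential inter-arrivals at rate $\paramT$, exponential claims at rate $\paramY$, premium $c$, and initial capital $W-y_{1}$. Denoting its probability of ruin within time $\tau$ by $\psi(z,\tau)$, I obtain
$$\P\{v<\timeR\le t\mid T_{1}=v\}=\int_{0}^{W}\paramY\,e^{-\paramY y_{1}}\,\psi(W-y_{1},t-v)\,dy_{1}.$$

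Next I would decompose $\psi(z,\tau)=\sum_{n\ge 1}\psi_{n}(z,\tau)$ by the index $n$ of the ruinous claim. For each $n$, I condition on the $n$-th jump time $\sigma_{n}=y$ (Erlang$(n,\paramT)$ density) and the total claim amount $S_{n}=\xi$ (Erlang$(n,\paramY)$ density), with the ruin event forcing $\xi>z+cy$. Given these, the preceding jump times $(\sigma_{1},\dots,\sigma_{n-1})$ and partial sums $(S_{1},\dots,S_{n-1})$ are independent uniform order statistics on $(0,y)$ and $(0,\xi)$ respectively, and the no-earlier-ruin constraint is $S_{j}\le z+c\sigma_{j}$ for $j<n$. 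A bivariate ballot-type identity yields
$$\P\{S_{j}\le z+c\sigma_{j}\ \forall j<n\mid(\sigma_{n},S_{n})=(y,\xi)\}=\frac{(nz+cy)(z+cy)^{n-2}}{n\,\xi^{n-1}}$$
(interpreted as $1$ for $n=1$). Performing the elementary $\xi$-integral over $(z+cy,\infty)$ gives $\psi_{n}(z,\tau)$ in closed form.

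Substituting into the first display and changing variable $u=W-y_{1}$, I would exploit the polynomial identity
$$\int_{0}^{W}(nu+cy)(u+cy)^{n-2}\,du=W(W+cy)^{n-1},$$
which follows from $nu+cy=n(u+cy)-(n-1)cy$, to collapse the $y_{1}$-integration. Summing the resulting series in $n$ and applying
$$\sum_{k\ge 0}\frac{(\paramT\paramY y(W+cy))^{k}}{k!\,(k+1)!}=\frac{\BesselI{1}(2\sqrt{\paramT\paramY y(W+cy)})}{\sqrt{\paramT\paramY y(W+cy)}},$$
together with $W=u+cv$ and $v+u/c=W/c$, I recover exactly the closed form stated in Theorem~\ref{dthyjrt}.

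The main technical obstacle is the bivariate ballot identity for the conditional probability displayed in the second paragraph: it couples two independent uniform-order-statistic samples through the linear boundary $z+c\sigma_{j}$, and its clean closed form depends crucially on the exponential distributions of both $T$ and $Y$. I would prove it either by induction on $n$ (the cases $n\le 3$ are short direct integrations, and the polynomial structure $(nz+cy)(z+cy)^{n-2}$ propagates cleanly through the induction step) or, more conceptually, by recognising the walk $\tilde S_{j}=S_{j}-z-c\sigma_{j}$ (starting at $-z$, with i.i.d.\ increments $Y_{j}-cT_{j}$) as an ordinary random walk to which a continuous Tak\'acs--Kendall cycle lemma applies. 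All subsequent steps are routine algebra and Bessel-series recognition.
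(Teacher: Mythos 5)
Your route is correct and does land exactly on the stated formula: I checked your bivariate ballot identity directly for $n=2,3$ and in the degenerate cases $z=0$ and $c\to 0$, and, granting it, your $\xi$-integral, the polynomial identity $\int_0^W(nu'+cy)(u'+cy)^{n-2}\,du'=W(W+cy)^{n-1}$ and the series summation reproduce the right-hand side of Theorem~\ref{dthyjrt} after the substitutions $W=u+cv$, $v+u/c=W/c$. It is, however, a genuinely different route from the paper's. The paper does no ballot combinatorics at this point: it starts from the general renewal-model representation \eqref{qwretgjhmg} (quoted from Malinovskii (2017), where the factor $(u+cv)/(u+cz)$ already carries the ballot-type correction, for arbitrary $T$ and $Y$), and exponentiality enters only through the two closed forms $f_{T}^{*n}(z-v)=\paramT(\paramT(z-v))^{n-1}e^{-\paramT(z-v)}/(n-1)!$ and $\P\{M(u+cz)=n\}=(\paramY(u+cz))^{n}e^{-\paramY(u+cz)}/n!$; substituting these into \eqref{qwretgjhmg}, recognizing the series \eqref{APPEND=AFormula=10} for $\BesselI{1}$, and setting $y=z-v$ finishes the proof in a few lines. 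You instead rebuild the ballot-type ingredient from scratch in the exponential case, via conditioning on $(\T{1},\Y{1})$, the ruinous-claim-index decomposition, uniform order statistics for both the arrival epochs and the claim partial sums, and the conditional probability $(nz+cy)(z+cy)^{n-2}/(n\,\xi^{n-1})$. What you gain is self-containedness (no appeal to the external key formula); the price is that this two-sample identity is the crux and is only sketched. It is true and of Tak\'acs type (Tak\'acs (1967) is already in the bibliography), but note that the plain cycle lemma does not suffice, since the boundary $z+c\sigma_{j}$ does not pass through the origin when $z>0$ --- the $z$-dependence of $(nz+cy)(z+cy)^{n-2}$ shows the answer is not a bare $1/n$ factor --- so you would need the generalized ballot theorem or your proposed induction. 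What the paper's approach buys is that the same master formula \eqref{qwretgjhmg} underlies both Theorem~\ref{srdthjrf} and Theorem~\ref{dthyjrt}, so the exponential case becomes a two-line specialization.
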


In Section~\ref{rthytrjrt}, we will show that
\begin{equation*}
\Int{t}{M}(u,c,v)=\underline{O}(1),\quad
\Int{t}{F}(u,c,v)=\underline{O}((u+cv)^{-1}),\quad
\Int{t}{S}(u,c,v)=\underline{O}((u+cv)^{-1}),
\end{equation*}
as $u+cv\to\infty$, with explicitly written right-hand sides. The latter means
that $\Int{t}{M}(u,c,v)$, $\Int{t}{F}(u,c,v)$, $\Int{t}{S}(u,c,v)$ in
\eqref{wsdrthyjkr} will be expressed in terms of c.d.f. of generalized inverse
Gaussian distributions. This converts the approximation \eqref{werhrjet} into
the usual-form asymptotic expansions with explicitly written main and first
correction terms.

\section{Elementary components and generalized inverse Gaussian distributions}\label{sdrthrjk}

\begin{definition}\label{rterherher}
By elementary\footnote{Elementary, as compared to $\Int{t}{M}(u,c,v)$,
$\Int{t}{F}(u,c,v)$, $\Int{t}{S}(u,c,v)$ introduced in \eqref{wqerthrrthj}.}
components we call
\begin{equation*}
\Elem{t}{k}(u,c,v)=\int_{0}^{\frac{c(t-v)}{u+cv}}\frac{1}{(1+x)^{k}}\,
\Ugauss{cM(1+x)}{\frac{c^2D^2(1+x)}{u+cv}}(x)\,dx,
\end{equation*}
where $k=0,1,2,\dots$.
\end{definition}

By $\BesselK{p}(z)$ we denote modified Bessel function of the second kind (see
Section~\ref{sdrthjrnm}).

\begin{definition}\label{sadfgerhej}
The generalized inverse Gaussian distribution with real $p$ and $\lambdaIG>0$,
$\muIG>0$ is given by p.d.f.
\begin{multline}\label{45t34y34}
f(x;\muIG,\lambdaIG,p)=\frac{1}{2\muIG^p\BesselK{p}(\frac{\lambdaIG}{\muIG})}\,
x^{p-1}\exp\Big\{-\frac{\lambdaIG(x^2+\muIG^2)}{2\muIG^2x}\Big\}
\\
=\frac{e^{-\frac{\lambdaIG}{\muIG}}}{2\muIG^p\BesselK{p}(\frac{\lambdaIG}{\muIG})}\,x^{p-1}
\exp\Big\{-\frac{\lambdaIG(x-\muIG)^2}{2\muIG^2 x}\Big\},\quad x>0.
\end{multline}
\end{definition}

Bearing in mind that\footnote{See Lemma~\ref{sdrjrnm} which yields the
expressions for $\BesselK{1/2}(z)=\BesselK{-1/2}(z)$, $\BesselK{3/2}(z)$, and
$\BesselK{5/2}(z)$.}
$\BesselK{1/2}(z)=\BesselK{-1/2}(z)=\frac{\sqrt{\pi}}{\sqrt{2z}}e^{-z}$, for
$p=\frac{1}{2}$ the equality \eqref{45t34y34} rewrites as
\begin{equation}\label{srdth}
f\big(x;\muIG,\lambdaIG,\tfrac{1}{2}\big)
=\frac{\lambdaIG^{1/2}}{\muIG\sqrt{2\pi}}\,x^{-1/2}\exp\Big\{-\frac{\lambdaIG(x-\muIG)^2}{2\muIG^2
x}\Big\},\quad x>0.
\end{equation}
For $p=-\frac{1}{2}$, the equality \eqref{45t34y34} rewrites as
\begin{equation}\label{rtuyjtjty}
f\big(x;\muIG,\lambdaIG,-\tfrac{1}{2}\big)=
\frac{\lambdaIG^{1/2}}{\sqrt{2\pi}}\,x^{-3/2}\exp\Big\{-\frac{\lambdaIG(x-\muIG)^2}{2\muIG^2
x}\Big\},\quad x>0.
\end{equation}
Plainly (cf. \eqref{wqrdtgrehr}), this is  p.d.f. of a standard inverse
Gaussian distribution.

Bearing in mind that
$\BesselK{3/2}(z)\eqOK\frac{\sqrt{\pi}}{\sqrt{2z}}e^{-z}(1+z^{-1})$, for
$p=-\frac{3}{2}$ the equality \eqref{45t34y34} rewrites as
\begin{equation}\label{gyhjjryjk}
f\big(x;\muIG,\lambdaIG,-\tfrac{3}{2}\big)
\eqOK\frac{\lambdaIG^{3/2}\muIG}{\sqrt{2\pi}(\lambdaIG+\muIG)}\,x^{-5/2}\exp\Big\{-\frac{\lambdaIG(x-\muIG)^2}{2\muIG^2
x}\Big\},\quad x>0.
\end{equation}

Bearing in mind that
$\BesselK{5/2}(z)\eqOK\frac{\sqrt{\pi}}{\sqrt{2z}}e^{-z}(1+3z^{-1}+3z^{-2})$,
for $p=-\frac{5}{2}$ the equality \eqref{45t34y34} rewrites as
\begin{equation}\label{retyurtikt7}
f\big(x;\muIG,\lambdaIG,-\tfrac{5}{2}\big)=\frac{\lambdaIG^{5/2}\muIG^{2}}{{\sqrt{2\pi}}\big(\lambdaIG^2+3\lambdaIG
\muIG+3\muIG^2\big)}\,x^{-7/2}\exp\Big\{-\frac{\lambdaIG(x-\muIG)^2}{2\muIG^2
x}\Big\},\quad x>0.
\end{equation}

In Section~\ref{rtgwerghwer}, we outlined the method by \citeNP{[Binet 1841]}.
It allows us to calculate c.d.f. corresponding to p.d.f.
\eqref{srdth}--\eqref{retyurtikt7} in an explicit form. For brevity, we skip
detailed demonstration of this calculation. The reader can verify its
correctness by means of direct differentiation of c.d.f. given below in
Theorems \ref{sdrwhgtrjn}--\ref{wdrthytm}.

\begin{theorem}\label{sdrwhgtrjn}
For $\lambdaIG>0$, $\muIG>0$, we have
\begin{multline*}
F\big(x;\muIG,\lambdaIG,\tfrac{1}{2}\big)=\int_{0}^{x}f\big(z;\muIG,\lambdaIG,\tfrac{1}{2}\big)dz
\\
\eqOK\UGauss{0}{1}\bigg(\sqrt{\frac{\lambdaIG}{x}}
\Big(\frac{x}{\muIG}-1\Big)\bigg)-\exp\Big\{\frac{2\lambdaIG}{\muIG}\Big\}
\UGauss{0}{1}\bigg(-\sqrt{\frac{\lambdaIG}{x}}
\Big(1+\frac{x}{\muIG}\Big)\bigg),\quad x>0.
\end{multline*}
\end{theorem}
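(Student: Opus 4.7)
The plan is to follow the route suggested in the remarks preceding the theorem: verify the claimed c.d.f. by direct differentiation of the right-hand side, complemented by a check at $x\to 0^+$. Write
\[
G(x)=\UGauss{0}{1}(a(x))-\exp\Big\{\tfrac{2\lambdaIG}{\muIG}\Big\}\UGauss{0}{1}(b(x)),
\]
where $a(x)=\sqrt{\lambdaIG/x}\,(x/\muIG-1)$ and $b(x)=-\sqrt{\lambdaIG/x}\,(1+x/\muIG)$. It suffices to show (i) $G(0^+)=0$ and (ii) $G'(x)=f\big(x;\muIG,\lambdaIG,\tfrac{1}{2}\big)$; the fundamental theorem of calculus then completes the proof.

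The key algebraic observation, around which everything pivots, is the identity
\[
b(x)^2-a(x)^2=\frac{\lambdaIG}{x\,\muIG^2}\big[(x+\muIG)^2-(x-\muIG)^2\big]=\frac{4\lambdaIG}{\muIG},
\]
so that $-b(x)^2/2+2\lambdaIG/\muIG=-a(x)^2/2$. Consequently, the prefactor $\exp\{2\lambdaIG/\muIG\}$ exactly absorbs the discrepancy between the Gaussian densities at $a(x)$ and $b(x)$:
\[
\exp\Big\{\tfrac{2\lambdaIG}{\muIG}\Big\}\,\Ugauss{0}{1}(b(x))=\Ugauss{0}{1}(a(x)).
\]
This is the same reflection trick that underlies Shuster's proof mentioned in the introduction, and it forces the two $\Ugauss{0}{1}$-terms produced by the chain rule to share a common density factor.

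Applying the chain rule gives $G'(x)=\Ugauss{0}{1}(a(x))\bigl(a'(x)-b'(x)\bigr)$. Writing $a(x)=\sqrt{\lambdaIG x}/\muIG-\sqrt{\lambdaIG/x}$ and $b(x)=-\sqrt{\lambdaIG/x}-\sqrt{\lambdaIG x}/\muIG$, a short calculation yields
\[
a'(x)-b'(x)=\frac{\sqrt{\lambdaIG}}{\muIG\sqrt{x}},
\]
so that $G'(x)=\frac{\lambdaIG^{1/2}}{\muIG\sqrt{2\pi}}\,x^{-1/2}\exp\{-\lambdaIG(x-\muIG)^2/(2\muIG^2 x)\}$, which matches \eqref{srdth}. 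For the boundary condition, as $x\to 0^+$ one has $\sqrt{\lambdaIG/x}\to\infty$, whence $a(x)\to-\infty$ and $b(x)\to-\infty$; therefore $\UGauss{0}{1}(a(x))\to 0$ and $\UGauss{0}{1}(b(x))\to 0$, and since $\exp\{2\lambdaIG/\muIG\}$ is a finite constant, $G(0^+)=0$.

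There is essentially no obstacle: the proof reduces to spotting the perfect-square cancellation $b(x)^2-a(x)^2=4\lambdaIG/\muIG$, after which the derivative computation and the boundary check are routine. The real content of the statement lies in guessing the correct form of the c.d.f., which is precisely what Binet's method cited earlier in the paper supplies; verification by differentiation, as the author remarks, is the cheap route to confirm its correctness.
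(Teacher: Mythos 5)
Your proposal is correct: the identity $b(x)^2-a(x)^2=4\lambdaIG/\muIG$ does make $\exp\{2\lambdaIG/\muIG\}\,\Ugauss{0}{1}(b(x))=\Ugauss{0}{1}(a(x))$, the chain rule then gives $G'(x)=\Ugauss{0}{1}(a(x))\,(a'(x)-b'(x))=\frac{\lambdaIG^{1/2}}{\muIG\sqrt{2\pi}}x^{-1/2}\exp\{-\lambdaIG(x-\muIG)^2/(2\muIG^2x)\}$, and together with $G(0^+)=0$ (both arguments tend to $-\infty$) and continuity of the integrand on $(0,\infty)$ the fundamental theorem of calculus yields the claim. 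Note, though, that this is the \emph{verification} route the paper itself merely invites; the paper's own (skipped) derivation is constructive, via Binet's method of Section~\ref{rtgwerghwer}: the substitution $y=\tfrac12(u+x)$ with $u=y+ry^{-1}$, $x=y-ry^{-1}$ splits the integral $\int y^{-2}e^{-q(y^2+r^2y^{-2})}dy$ into two Gaussian-type integrals with explicit antiderivatives, which is how one \emph{discovers} the closed form (and its analogues in Theorems~\ref{sdtyhjtm}--\ref{wdrthytm}) rather than merely confirming a guessed expression. So your argument is shorter and fully rigorous for the stated theorem, while Binet's route explains where the right-hand side comes from and scales to the $p=-\tfrac32,-\tfrac52$ cases; as a standalone proof of this statement, what you wrote suffices.
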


\begin{theorem}\label{sdtyhjtm}
For $\lambdaIG>0$ and $\muIG>0$, we have
\begin{multline*}
F\big(x;\muIG,\lambdaIG,-\tfrac{1}{2}\big)=\int_{0}^{x}f\big(z;\muIG,\lambdaIG,-\tfrac{1}{2}\big)dz
\\
\eqOK\UGauss{0}{1}\bigg(\sqrt{\frac{\lambdaIG}{x}}\bigg(\frac{x}{\muIG}-1\bigg)\bigg)
+\exp\bigg\{\frac{2\lambdaIG}{\muIG}\bigg\}\,\UGauss{0}{1}
\bigg(-\sqrt{\frac{\lambdaIG}{x}}\bigg(\frac{x}{\muIG}+1\bigg)\bigg),\quad x>0.
\end{multline*}
\end{theorem}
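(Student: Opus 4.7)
Following the authors' remark preceding the statement (``the reader can verify its correctness by means of direct differentiation''), my plan is to differentiate both sides and check the boundary at $x\to 0^+$, after which the Fundamental Theorem of Calculus closes the argument. Abbreviate
\[
A(x)=\sqrt{\lambda/x}\,(x/\mu-1),\qquad B(x)=-\sqrt{\lambda/x}\,(x/\mu+1),
\]
and let $\Phi$, $\phi$ denote the standard normal c.d.f.\ and p.d.f.\ (i.e.\ $\UGauss{0}{1}$ and $\Ugauss{0}{1}$); the right-hand side of the theorem reads $\Phi(A(x))+e^{2\lambda/\mu}\Phi(B(x))$.

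The single nontrivial observation is the algebraic identity $B(x)^2-A(x)^2=4\lambda/\mu$, together with $A(x)^2=\lambda(x-\mu)^2/(\mu^2 x)$. Combined, these give
\[
e^{2\lambda/\mu}e^{-B(x)^2/2}=e^{-A(x)^2/2}=\exp\!\bigl\{-\lambda(x-\mu)^2/(2\mu^2 x)\bigr\},
\]
so the two Gaussian pieces share the exponential factor already present in the density $f(x;\mu,\lambda,-1/2)$ from \eqref{rtuyjtjty}. This is the mechanism that forces the weight $e^{2\lambda/\mu}$ to appear in the formula.

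Next I would compute $A'(x)=\tfrac{1}{2x\mu}\sqrt{\lambda/x}\,(x+\mu)$ and $B'(x)=\tfrac{1}{2x\mu}\sqrt{\lambda/x}\,(\mu-x)$, and differentiate the right-hand side as $\phi(A(x))A'(x)+e^{2\lambda/\mu}\phi(B(x))B'(x)$. By the identity above, the common factor $\phi(A(x))=e^{2\lambda/\mu}\phi(B(x))$ pulls out, and the bracket $(x+\mu)+(\mu-x)=2\mu$ cancels the $2\mu$ in the denominator, leaving exactly
\[
\tfrac{\sqrt{\lambda}}{\sqrt{2\pi}}\,x^{-3/2}\exp\!\bigl\{-\lambda(x-\mu)^2/(2\mu^2 x)\bigr\}=f(x;\mu,\lambda,-1/2).
\]
The boundary check is immediate: as $x\to 0^+$, both $A(x)$ and $B(x)$ tend to $-\infty$, so $\Phi(A(x))$ and $\Phi(B(x))$ vanish (the factor $e^{2\lambda/\mu}$ being finite), matching the vanishing of the left-hand integral.

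There is no deep obstacle here: the whole proof reduces to the identity $B^2-A^2=4\lambda/\mu$ plus a short differentiation, and the specific form of the second argument $-\sqrt{\lambda/x}(x/\mu+1)$ has evidently been engineered so that this cancellation occurs. A more constructive alternative would be the Binet-type route outlined in Section~\ref{rtgwerghwer}: complete the square in the exponent as $-\lambda z/(2\mu^2)-\lambda/(2z)+\lambda/\mu$ and apply the substitutions $u=\sqrt{\lambda/z}\,(z/\mu\mp 1)$, under which $du$ is, in each case, a linear combination of $z^{-1/2}dz$ and $z^{-3/2}dz$ that isolates the two $\Phi$-terms separately. However, given the authors' explicit suggestion, the verification-by-differentiation path is clearly the intended one and is much shorter.
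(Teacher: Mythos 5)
Your proposal is correct — the key identity $B(x)^2-A(x)^2=4\lambda/\mu$, the derivative computation showing the right-hand side differentiates to $f(x;\mu,\lambda,-\tfrac12)$, and the boundary check $\Phi(A(x)),\Phi(B(x))\to 0$ as $x\to 0^+$ all hold — and it is essentially the paper's own route, since the authors skip the detailed Binet-integral calculation and explicitly leave verification by direct differentiation of the stated c.d.f. to the reader. Nothing further is needed.
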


\begin{theorem}\label{styhjrtjf}
For $\lambdaIG>0$, $\muIG>0$, we have
\begin{multline*}
F\big(x;\muIG,\lambdaIG,-\tfrac{3}{2}\big)=\int_{0}^{x}f\big(z;\muIG,\lambdaIG,-\tfrac{3}{2}\big)dz
\\
\eqOK\UGauss{0}{1}\bigg(\sqrt{\frac{\lambdaIG}{x}}\bigg(\frac{x}{\muIG}-1\bigg)\bigg)
-\frac{\lambdaIG-\muIG}{\lambdaIG+\muIG}\exp\bigg\{\frac{2\lambdaIG}{\muIG}\bigg\}
\UGauss{0}{1}\bigg(-\sqrt{\frac{\lambdaIG}{x}}\bigg(\frac{x}{\muIG}+1\bigg)\bigg)
\\
+\frac{\sqrt{2\lambdaIG}\,\muIG}{\sqrt{\pi x}(\lambdaIG+\muIG)}
\exp\bigg\{\frac{\lambdaIG}{\muIG}\bigg\}\exp\bigg\{-\frac{\lambdaIG}{2x}\bigg(\frac{x^2}{\muIG^2}+1\bigg)\bigg\},\quad
x>0.
\end{multline*}
\end{theorem}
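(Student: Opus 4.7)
My plan is to verify the claim directly, as the authors suggest in the paragraph preceding Theorem~\ref{sdrwhgtrjn}: I will differentiate the proposed right-hand side in $x$ and check that the result coincides with the density \eqref{gyhjjryjk}, and separately check the boundary value at the origin. Write the candidate c.d.f.\ as $F(x) = A(x) - \tfrac{\lambdaIG-\muIG}{\lambdaIG+\muIG}\exp\{2\lambdaIG/\muIG\}\,B(x) + C(x)$, where
\[
A(x) = \UGauss{0}{1}\Bigl(\sqrt{\lambdaIG/x}\,(x/\muIG - 1)\Bigr),\quad B(x) = \UGauss{0}{1}\Bigl(-\sqrt{\lambdaIG/x}\,(x/\muIG + 1)\Bigr),
\]
\[
C(x) = \frac{\sqrt{2\lambdaIG}\,\muIG}{\sqrt{\pi x}\,(\lambdaIG+\muIG)}\exp\{\lambdaIG/\muIG\}\exp\{-\tfrac{\lambdaIG}{2x}(x^2/\muIG^2 + 1)\}.
\]
First I would check $F(0^+) = 0$: as $x \downarrow 0$ both arguments of $\UGauss{0}{1}$ tend to $-\infty$, so $A, B \to 0$, and $C(x)$ contains the factor $x^{-1/2}\exp\{-\lambdaIG/(2x)\}$ which vanishes.

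Next I would invoke the two exponent identities (immediate upon expanding squares)
\[
\tfrac{\lambdaIG(x+\muIG)^2}{2\muIG^2 x} = \tfrac{\lambdaIG(x-\muIG)^2}{2\muIG^2 x} + \tfrac{2\lambdaIG}{\muIG},\qquad \tfrac{\lambdaIG(x^2+\muIG^2)}{2\muIG^2 x} = \tfrac{\lambdaIG(x-\muIG)^2}{2\muIG^2 x} + \tfrac{\lambdaIG}{\muIG},
\]
so that, after attaching the prefactors $\exp\{2\lambdaIG/\muIG\}$ on $B$ and $\exp\{\lambdaIG/\muIG\}$ on $C$, each of the three summands is proportional to the common factor $E(x) := \exp\{-\lambdaIG(x-\muIG)^2/(2\muIG^2 x)\}$ that appears in the target density. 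Differentiating $A$ and $B$ by the chain rule and using $E'(x) = -\lambdaIG(x^2-\muIG^2)(2\muIG^2 x^2)^{-1}E(x)$, I would obtain
\[
A'(x) = \frac{\sqrt{\lambdaIG}\,(x+\muIG)}{2\muIG\sqrt{2\pi}\,x^{3/2}}E(x),\quad \exp\{2\lambdaIG/\muIG\}B'(x) = \frac{\sqrt{\lambdaIG}\,(\muIG-x)}{2\muIG\sqrt{2\pi}\,x^{3/2}}E(x),
\]
\[
C'(x) = -\frac{\sqrt{\lambdaIG}\,(\lambdaIG x^2 + \muIG^2 x - \lambdaIG\muIG^2)}{\muIG\sqrt{2\pi}\,(\lambdaIG+\muIG)\,x^{5/2}}E(x).
\]

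The last step is the algebraic cancellation. Adding the two $\UGauss{0}{1}$-contributions, the bracket $(x+\muIG) - \tfrac{\lambdaIG-\muIG}{\lambdaIG+\muIG}(\muIG-x)$ simplifies to $2(\lambdaIG x + \muIG^2)/(\lambdaIG+\muIG)$; combining with $C'(x)$ and factoring $\sqrt{\lambdaIG}\,E(x)/[\muIG\sqrt{2\pi}\,(\lambdaIG+\muIG)\,x^{5/2}]$ out of the sum, the remaining polynomial $(\lambdaIG x + \muIG^2)x - (\lambdaIG x^2 + \muIG^2 x - \lambdaIG\muIG^2)$ trivializes to $\lambdaIG\muIG^2$, yielding $F'(x) = \lambdaIG^{3/2}\muIG\,x^{-5/2}E(x)/[\sqrt{2\pi}(\lambdaIG+\muIG)] = f(x;\muIG,\lambdaIG,-\tfrac{3}{2})$, which is exactly \eqref{gyhjjryjk}. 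The hard part will be ensuring this final cancellation, i.e.\ confirming that the rational coefficient $-(\lambdaIG-\muIG)/(\lambdaIG+\muIG)$ and the constant $\sqrt{2\lambdaIG}\,\muIG/[\sqrt{\pi}(\lambdaIG+\muIG)]$ in front of the non-Gaussian correction are tuned so that the residual polynomial is the constant $\lambdaIG\muIG^2$ (any other values would leave an $x$-dependent remainder). A conceptually cleaner alternative, along the lines of the Binet-type method outlined in Section~\ref{rtgwerghwer}, would derive $C(x)$ directly as the boundary contribution of an integration by parts that reduces the case $p = -\tfrac{3}{2}$ to the case $p = -\tfrac{1}{2}$ already established in Theorem~\ref{sdtyhjtm}.
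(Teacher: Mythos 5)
Your proposal is correct and is essentially the paper's own route: the paper skips the Binet-type derivation and explicitly states that Theorems \ref{sdrwhgtrjn}--\ref{wdrthytm} can be verified by direct differentiation of the stated c.d.f., which is exactly what you carry out (your derivatives $A'$, $B'$, $C'$, the exponent identities, and the final cancellation to $\lambdaIG\muIG^2$ all check out, and the boundary check $F(0^+)=0$ correctly pins down the constant of integration). Your closing remark about reducing $p=-\tfrac{3}{2}$ to $p=-\tfrac{1}{2}$ via integration by parts is the flavor of the derivation the paper attributes to Binet but does not write out.
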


\begin{theorem}\label{wdrthytm}
For $\lambdaIG>0$, $\muIG>0$, we have
\begin{multline*}
F\big(x;\muIG,\lambdaIG,-\tfrac{5}{2}\big)
=\int_{0}^{x}f\big(z;\muIG,\lambdaIG,-\tfrac{5}{2}\big)dz
\\
\eqOK\UGauss{0}{1}\Bigg(\sqrt{\frac{\lambdaIG}{x}}\bigg(\frac{x}{\muIG}-1\bigg)\Bigg)
+\frac{\lambdaIG^2-3\lambdaIG\muIG+3\muIG^2}{\lambdaIG^2+3\lambdaIG\muIG+3\muIG^2}
\exp\bigg\{\frac{2\lambdaIG}{\muIG}\bigg\}
\UGauss{0}{1}\Bigg(-\sqrt{\frac{\lambdaIG}{x}}\bigg(\frac{x}{\muIG}+1\bigg)\Bigg)
\\
+\frac{\sqrt{2\lambdaIG}\,\muIG^2(\lambdaIG+3x)}{\sqrt{\pi}\big(\lambdaIG^2+3\lambdaIG\muIG+3\muIG^2\big)
x^{3/2}}
\exp\bigg\{\frac{\lambdaIG}{\muIG}\bigg\}\exp\bigg\{-\frac{\lambdaIG}{2x}\bigg(\frac{x^2}{\muIG^2}+1\bigg)\bigg\},\quad
x>0.
\end{multline*}
\end{theorem}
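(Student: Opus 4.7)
My plan is to verify the identity in Theorem~\ref{wdrthytm} by direct differentiation, as the author explicitly recommends. Denote the right-hand side by $G(x)$; one must show that $G'(x) = f(x;\muIG,\lambdaIG,-\tfrac{5}{2})$, with the latter given by \eqref{retyurtikt7}.

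Introduce the shorthand $\alpha(x) = \sqrt{\lambdaIG/x}\,(x/\muIG - 1)$ and $\beta(x) = -\sqrt{\lambdaIG/x}\,(x/\muIG + 1)$ for the arguments of the two standard normal c.d.f.'s in $G(x)$. A short computation yields
$$\alpha'(x) = \frac{\sqrt{\lambdaIG}\,(x+\muIG)}{2\muIG\,x^{3/2}}, \qquad \beta'(x) = -\frac{\sqrt{\lambdaIG}\,(x-\muIG)}{2\muIG\,x^{3/2}},$$
and the elementary identity $(x+\muIG)^2 = (x-\muIG)^2 + 4x\muIG$ gives the key relation
$$\Ugauss{0}{1}(\beta(x))\,\exp\{2\lambdaIG/\muIG\} = \Ugauss{0}{1}(\alpha(x)) = \frac{1}{\sqrt{2\pi}}\exp\Big\{-\frac{\lambdaIG(x-\muIG)^2}{2\muIG^2 x}\Big\}.$$
After differentiation the first two terms of $G$ therefore both become scalar multiples of the single function $\Ugauss{0}{1}(\alpha(x))$, and the exponential appearing in $\Ugauss{0}{1}(\alpha(x))$ is exactly the one in \eqref{retyurtikt7}.

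For the third term of $G$, one uses the algebraic identity $\exp\{\lambdaIG/\muIG\}\exp\{-\lambdaIG(x^2+\muIG^2)/(2\muIG^2 x)\} = \exp\{-\lambdaIG(x-\muIG)^2/(2\muIG^2 x)\}$ to recognise that it too is proportional to $\Ugauss{0}{1}(\alpha(x))$ times a rational function of $x$. Differentiating the product splits into three pieces (from the factor $\lambdaIG+3x$, from $x^{-3/2}$, and from the exponential), each of which combines naturally with the contributions from the first two terms once $\Ugauss{0}{1}(\alpha(x))$ is pulled out.

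The main obstacle is the algebraic simplification: after collecting the three contributions one is left with an expression of the form $\Ugauss{0}{1}(\alpha(x))\,x^{-7/2}\,R(x;\lambdaIG,\muIG)$, and one must verify that $R$ reduces to the $x$-independent constant $\lambdaIG^{5/2}\muIG^{2}/(\lambdaIG^2+3\lambdaIG\muIG+3\muIG^2)$. The peculiar coefficient $(\lambdaIG^2-3\lambdaIG\muIG+3\muIG^2)/(\lambdaIG^2+3\lambdaIG\muIG+3\muIG^2)$ multiplying $\Ugauss{0}{1}(\beta(x))$ in the theorem is engineered precisely so that the $x$-dependent monomials in $R$ cancel in pairs. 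An alternative route, avoiding such ad hoc cancellations, is to derive the identity constructively via the Binet-type relation $\frac{d}{dx}\bigl(x^p e^{-Bx-A/x}\bigr) = (p x^{p-1} - B x^p + A x^{p-2})e^{-Bx-A/x}$ with $A = \lambdaIG/2$, $B = \lambdaIG/(2\muIG^2)$, treated as a three-term recursion relating $\int_0^x z^{p-1}e^{-Bz-A/z}\,dz$ for consecutive half-integer values of $p$; this expresses the $p=-\tfrac{5}{2}$ integral in terms of the $p=-\tfrac{3}{2}$ and $p=-\tfrac{1}{2}$ integrals already handled in Theorems~\ref{styhjrtjf} and~\ref{sdtyhjtm}, after which one normalises using $\BesselK{5/2}(z) = \frac{\sqrt{\pi}}{\sqrt{2z}}e^{-z}(1+3z^{-1}+3z^{-2})$.
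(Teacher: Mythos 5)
Your proposal is correct and follows essentially the route the paper itself indicates: the paper derives Theorems~\ref{sdrwhgtrjn}--\ref{wdrthytm} via Binet's method and explicitly leaves verification to direct differentiation, which is exactly what you set up, and the final cancellation you defer does work out --- collecting your three contributions and clearing the powers of $x$ leaves $3\lambda x^{3}+\lambda^{2}x^{2}+3\mu^{2}x^{2}+6\mu^{2}x^{2}-3\mu^{2}x(\lambda+3x)+\lambda(\lambda+3x)(\mu^{2}-x^{2})=\lambda^{2}\mu^{2}$, so that $G'(x)=\frac{\lambda^{5/2}\mu^{2}}{\sqrt{2\pi}\,(\lambda^{2}+3\lambda\mu+3\mu^{2})}\,x^{-7/2}\exp\big\{-\frac{\lambda(x-\mu)^{2}}{2\mu^{2}x}\big\}$ as required. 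The one small step to add is pinning the constant of integration: besides $G'=f$ you should note that each term of $G$ tends to $0$ as $x\to 0^{+}$ (the factor $\exp\{-\lambda/(2x)\}$ dominates $x^{-3/2}$), so $G(x)=\int_{0}^{x}f\big(z;\mu,\lambda,-\tfrac{5}{2}\big)\,dz$ and not merely $F(x)+\mathrm{const}$.
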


Let us express the elementary components $\Elem{t}{k}(u,c,v)$, $k=0,1,2,3$,
first through c.d.f. of generalized inverse Gaussian distributions, and second
through $\UGauss{0}{1}(x)$.

\begin{theorem}\label{sdfghreh}
For $\cS=\frac{1}{M}$, $\lambdaIG=\frac{u+cv}{c^2D^2}>0$,
$\muIG=\frac{1}{1-cM}$, and $\mmu=-\muIG=\frac{1}{cM-1}$, we have
\begin{equation}\label{etfyhjkrt}
\Elem{t}{0}(u,c,v)=\begin{cases} \muIG
F\big(x;\muIG,\lambdaIG,\tfrac{1}{2}\big)\Big|_{x=1}^{\frac{c(t-v)}{u+cv}+1},&0<c\leqslant\cS,
\\[6pt]
\mmu\exp\big\{-2\,\tfrac{\lambdaIG}{\mmu}\big\}
F\big(x;\mmu,\lambdaIG,\tfrac{1}{2}\big)\Big|_{x=1}^{\frac{c(t-v)}{u+cv}+1},&
c\geqslant\cS.
\end{cases}
\end{equation}
\end{theorem}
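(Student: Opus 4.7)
The plan is to unpack the Gaussian density inside $\Elem{t}{0}(u,c,v)$, perform the change of variable $y=1+x$, and identify the resulting integrand as a constant multiple of the generalized inverse Gaussian density $f(\cdot;\muIG,\lambdaIG,\tfrac{1}{2})$ from \eqref{srdth}. The dichotomy in \eqref{etfyhjkrt} will arise entirely from the sign of $1-cM$, i.e.\ from whether $c<\cS$ or $c>\cS$.

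Writing out $\Ugauss{cM(1+x)}{c^2D^2(1+x)/(u+cv)}(x)$ explicitly and introducing $\lambdaIG=(u+cv)/(c^2D^2)$ turns the integrand of $\Elem{t}{0}(u,c,v)$ into $\sqrt{\lambdaIG/(2\pi(1+x))}\,\exp\{-\lambdaIG((1-cM)(1+x)-1)^2/(2(1+x))\}$. The substitution $y=1+x$ then transforms the limits $[0,\,c(t-v)/(u+cv)]$ into $[1,\,1+c(t-v)/(u+cv)]$ and rewrites the exponent in terms of the single quantity $(1-cM)y-1$, whose sign is what will split the computation into two cases.

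In the sub-critical case $c<\cS$, taking $\muIG=1/(1-cM)>0$ gives $(1-cM)y-1=(y-\muIG)/\muIG$, and by \eqref{srdth} the integrand equals exactly $\muIG\,f(y;\muIG,\lambdaIG,\tfrac{1}{2})$; integrating over $y\in[1,\,1+c(t-v)/(u+cv)]$ yields the first line of \eqref{etfyhjkrt}. In the super-critical case $c>\cS$, taking $\mmu=1/(cM-1)>0$ gives $(1-cM)y-1=-(y+\mmu)/\mmu$, so squaring kills the sign and leaves the ``wrong-sign'' exponent $(y+\mmu)^2/\mmu^2$. The identity $(y+\mmu)^2=(y-\mmu)^2+4y\mmu$ then pulls out the prefactor $\exp(-2\lambdaIG/\mmu)$, converting the integrand into $\mmu\,e^{-2\lambdaIG/\mmu}\,f(y;\mmu,\lambdaIG,\tfrac{1}{2})$, whose integration gives the second line of \eqref{etfyhjkrt}; the boundary value $c=\cS$ is recovered by continuity in $c$. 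There is no serious obstacle: the only step requiring any care is the sign-flip trick in the super-critical branch, whose exponential prefactor is generated by the identity $(y+\mmu)^2=(y-\mmu)^2+4y\mmu$, mirroring the mechanism that produces the second term in the c.d.f.\ formula of Theorem~\ref{sdrwhgtrjn}.
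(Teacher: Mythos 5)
Your proposal is correct and follows essentially the same route as the paper: rewriting the normal density with $\lambdaIG=\frac{u+cv}{c^2D^2}$ so that it becomes $\muIG f(1+x;\muIG,\lambdaIG,\tfrac12)$ for $0<c\leqslant\cS$ and $\mmu\,e^{-2\lambdaIG/\mmu}f(1+x;\mmu,\lambdaIG,\tfrac12)$ for $c\geqslant\cS$, then integrating. Your identity $(y+\mmu)^2=(y-\mmu)^2+4y\mmu$ is exactly the content of the paper's two-sided identity \eqref{ertehtrjt}, which packages the same sign-flip and exponential prefactor in one formula.
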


\begin{proof}
Note that
\begin{multline}\label{ertehtrjt}
\Ugauss{cM(1+x)}{\frac{c^2D^2(1+x)}{u+cv}}(x)
=\frac{1}{\sqrt{2\pi}}\exp\bigg\{\frac{\lambdaIG}{\muIG}\bigg\}
\exp\bigg\{-\frac{\lambdaIG((1+x)^2+\muIG^2)}{2\muIG^2 (1+x)}\bigg\}
\\
=\frac{1}{\sqrt{2\pi}}\exp\bigg\{-\frac{\lambdaIG}{\mmu}\bigg\}
\exp\bigg\{-\frac{\lambdaIG((1+x)^2+\mmu^2)}{2\mmu^2 (1+x)}\bigg\}.
\end{multline}
For $0<c\leqslant\cS$, from where follows $\muIG>0$, we use the first equation
\eqref{ertehtrjt} and have
\begin{equation*}\label{sarfghfdn}
\begin{aligned}
\Elem{t}{0}(u,c,v)=\muIG\int_{0}^{\frac{c(t-v)}{u+cv}}
f\big(1+x;\muIG,\lambdaIG,\tfrac{1}{2}\big)dx.
\end{aligned}
\end{equation*}
For $c\geqslant\cS$, from where follows $\mmu>0$, we use the second equation
\eqref{ertehtrjt} and have
\begin{equation*}
\Elem{t}{0}(u,c,v)=\mmu\exp\big\{-2\,\tfrac{\lambdaIG}{\mmu}\big\}\int_{0}^{\frac{c(t-v)}{u+cv}}
f\big(1+x;\mmu,\lambdaIG,\tfrac{1}{2}\big)dx.
\end{equation*}
Applying Theorem~\ref{sdrwhgtrjn} to these integrals, we get \eqref{etfyhjkrt},
as required.
\end{proof}

Taking advantage of Theorem~\ref{sdrwhgtrjn}, we rewrite \eqref{etfyhjkrt} as
\begin{equation}\label{srdfghrtjhr}
\begin{aligned}
\Elem{t}{0}(u,c,v)=\begin{cases}
\muIG\bigg[\UGauss{0}{1}\Big(\sqrt{\frac{\lambdaIG}{x}}
\Big(\frac{x}{\muIG}-1\Big)\Big)
\\[-4pt]
-\exp\Big\{\frac{2\lambdaIG}{\muIG}\Big\}
\UGauss{0}{1}\Big(-\sqrt{\frac{\lambdaIG}{x}}
\Big(1+\frac{x}{\muIG}\Big)\Big)\bigg]\bigg|_{x=1}^{\frac{c(t-v)}{u+cv}+1},&0<c\leqslant\cS,
\\[12pt]
\mmu\exp\big\{-2\,\tfrac{\lambdaIG}{\mmu}\big\}
\bigg[\UGauss{0}{1}\bigg(\sqrt{\frac{\lambdaIG}{x}}
\Big(\frac{x}{\mmu}-1\Big)\bigg)
\\[-4pt]
-\exp\Big\{\frac{2\lambdaIG}{\mmu}\Big\}
\UGauss{0}{1}\bigg(-\sqrt{\frac{\lambdaIG}{x}}
\Big(1+\frac{x}{\mmu}\Big)\bigg)\bigg]\bigg|_{x=1}^{\frac{c(t-v)}{u+cv}+1},&
c\geqslant\cS,
\end{cases}
\end{aligned}
\end{equation}
which can be rewritten for all $c>0$ as
\begin{multline}\label{wqertgher}
\Elem{t}{0}(u,c,v)=\frac{1}{1-cM}\bigg[\UGauss{0}{1}\Big(\frac{\sqrt{u+cv}}{cD\sqrt{x}}
\big(x(1-cM)-1\big)\Big)
\\
-\exp\Big\{\frac{2(u+cv)}{c^2D^2}(1-cM)\Big\}
\UGauss{0}{1}\Big(-\frac{\sqrt{u+cv}}{cD\sqrt{x}}
\big(1+x(1-cM)\big)\Big)\bigg]\bigg|_{x=1}^{\frac{c(t-v)}{u+cv}+1}.
\end{multline}

\begin{theorem}\label{qwergtrh}
For $\cS=\frac{1}{M}$, $\lambdaIG=\frac{u+cv}{c^2D^2}>0$,
$\muIG=\frac{1}{1-cM}$, and $\mmu=-\muIG=\frac{1}{cM-1}$, we have
\begin{equation}\label{sdfghdtn}
\Elem{t}{1}(u,c,v)=\begin{cases}
F\big(x;\muIG,\lambdaIG,-\tfrac{1}{2}\big)\Big|_{x=1}^{\frac{c(t-v)}{u+cv}+1},&0<c\leqslant\cS,
\\[6pt]
\exp\big\{-2\,\tfrac{\lambdaIG}{\mmu}\big\}
F\big(x;\mmu,\lambdaIG,-\tfrac{1}{2}\big)\Big|_{x=1}^{\frac{c(t-v)}{u+cv}+1},&
c\geqslant\cS.
\end{cases}
\end{equation}
\end{theorem}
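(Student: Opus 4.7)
The strategy mirrors exactly the proof of Theorem~\ref{sdfghreh}, replacing the $p=\tfrac{1}{2}$ density by the $p=-\tfrac{1}{2}$ density \eqref{rtuyjtjty}. The starting point is the change of variable $y=1+x$, which transforms the integral into
\begin{equation*}
\Elem{t}{1}(u,c,v)=\int_{1}^{\frac{c(t-v)}{u+cv}+1}\frac{1}{y}\,
\Ugauss{cMy}{\frac{c^2D^2 y}{u+cv}}(y-1)\,dy.
\end{equation*}
The key observation is that the extra factor $1/y$ combines with the $y^{-1/2}$ coming from the variance-dependent normalization of the Gaussian density to produce precisely the $y^{-3/2}$ appearing in \eqref{rtuyjtjty}; this is the reason the $k=1$ case picks out the parameter $p=-\tfrac{1}{2}$, just as $k=0$ picked out $p=\tfrac{1}{2}$.

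Next I would reuse the algebraic identity already derived in the proof of Theorem~\ref{sdfghreh}. Writing out the Gaussian density explicitly with $\lambdaIG=(u+cv)/(c^2D^2)$ and $\muIG=1/(1-cM)$, the quadratic in the exponent becomes
\begin{equation*}
\frac{(y(1-cM)-1)^2(u+cv)}{2c^2D^2 y}=\frac{\lambdaIG(y-\muIG)^2}{2\muIG^2 y},
\end{equation*}
so that for $0<c\leqslant\cS$ (where $\muIG>0$) the integrand equals $f\big(y;\muIG,\lambdaIG,-\tfrac{1}{2}\big)$ on the nose---no prefactor remains, in contrast to the $k=0$ case where the prefactor was $\muIG$. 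Applying Theorem~\ref{sdtyhjtm} then yields the first branch of \eqref{sdfghdtn}.

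For $c\geqslant\cS$ one has $\muIG<0$ and must work with $\mmu=-\muIG=1/(cM-1)>0$. Here $(y(1-cM)-1)^2=\mmu^{-2}(y+\mmu)^2$, and the identity
\begin{equation*}
(y+\mmu)^2=(y-\mmu)^2+4\mmu y
\end{equation*}
splits the exponential cleanly, pulling out the factor $\exp\{-2\lambdaIG/\mmu\}$ and leaving $f\big(y;\mmu,\lambdaIG,-\tfrac{1}{2}\big)$. This is the only algebraic step with any subtlety---everything else is bookkeeping---and it is the verbatim analogue of the manipulation already carried out in the proof of Theorem~\ref{sdfghreh} via equation \eqref{ertehtrjt}. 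A second application of Theorem~\ref{sdtyhjtm} then gives the second branch of \eqref{sdfghdtn}, completing the proof.
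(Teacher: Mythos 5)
Your plan is correct: the change of variable $y=1+x$, the identification of the integrand with $f\big(y;\muIG,\lambdaIG,-\tfrac{1}{2}\big)$ (with the factor $e^{-2\lambdaIG/\mmu}$ extracted via $(y+\mmu)^2=(y-\mmu)^2+4\mmu y$ when $c\geqslant\cS$), and the appeal to Theorem~\ref{sdtyhjtm} all check out. This is exactly the paper's argument, since its proof of Theorem~\ref{qwergtrh} is simply the proof of Theorem~\ref{sdfghreh} repeated with the $p=-\tfrac{1}{2}$ density in place of the $p=\tfrac{1}{2}$ one.
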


\begin{proof}
The proof is quite similar to the proof of Theorem~\ref{sdfghreh}.
\end{proof}

Taking advantage of Theorem~\ref{sdtyhjtm}, we rewrite \eqref{sdfghdtn} first
in the form similar to \eqref{srdfghrtjhr}, and thereafter, for all $c>0$, as
\begin{multline}\label{we5thrjt}
\Elem{t}{1}(u,c,v)=\bigg[\UGauss{0}{1}\Big(\frac{\sqrt{u+cv}}{cD\sqrt{x}}
\big(x(1-cM)-1\big)\Big)
\\
+\exp\Big\{\frac{2(u+cv)}{c^2D^2}(1-cM)\Big\}
\UGauss{0}{1}\Big(-\frac{\sqrt{u+cv}}{cD\sqrt{x}}
\big(1+x(1-cM)\big)\Big)\bigg]\bigg|_{x=1}^{\frac{c(t-v)}{u+cv}+1}.
\end{multline}

\begin{theorem}\label{gyjrtjr}
For $\cS=\frac{1}{M}$, $\lambdaIG=\frac{u+cv}{c^2D^2}>0$,
$\muIG=\frac{1}{1-cM}$, and $\mmu=-\muIG=\frac{1}{cM-1}$, we have
\begin{equation}\label{tryjujkr}
\Elem{t}{2}(u,c,v)=\begin{cases} \frac{(\lambdaIG+\muIG)}{\muIG\lambdaIG}
F\big(x;\muIG,\lambdaIG,-\tfrac{3}{2}\big)
\Big|_{x=1}^{\frac{c(t-v)}{u+cv}+1},&0<c\leqslant\cS,
\\[6pt]
\frac{(\lambdaIG+\mmu)}{\lambdaIG\mmu}\exp\big\{-2\frac{\lambdaIG}{\mmu}\big\}
F\big(x;\mmu,\lambdaIG,-\tfrac{3}{2}\big)\Big|_{x=1}^{\frac{c(t-v)}{u+cv}+1},&c\geqslant\cS.
\end{cases}
\end{equation}
\end{theorem}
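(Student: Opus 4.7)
My plan is to mimic the proofs of Theorems~\ref{sdfghreh} and~\ref{qwergtrh} essentially verbatim, the only real change being that the reciprocal of the normalization constant of the relevant generalized inverse Gaussian density will now produce the prefactor $(\lambdaIG+\muIG)/(\lambdaIG\muIG)$ instead of $\muIG$ or $1$.

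The first step is to apply the substitution $y = 1+x$ inside the integral defining $\Elem{t}{2}(u,c,v)$, turning the integrand into $y^{-2}$ times the Gaussian factor evaluated at $x = y-1$. Using equation \eqref{ertehtrjt} (which expresses the Gaussian density as an exponential of the GIG-type kernel $-\lambdaIG(y^2+\muIG^2)/(2\muIG^2 y)$ times $\sqrt{\lambdaIG/(2\pi y)}\,e^{\lambdaIG/\muIG}$), the integrand becomes proportional to $y^{-5/2}\exp\{-\lambdaIG(y^2+\muIG^2)/(2\muIG^2 y)\}$. Since the p.d.f.\ $f(y;\muIG,\lambdaIG,-\tfrac{3}{2})$ in \eqref{gyhjjryjk} has precisely the same $y^{-5/2}$ dependence and the same exponential kernel, matching the coefficients shows that the integrand is exactly $\frac{\lambdaIG+\muIG}{\lambdaIG\muIG}\,f(y;\muIG,\lambdaIG,-\tfrac{3}{2})$.

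For $0<c\leqslant\cS$ one has $\muIG=1/(1-cM)>0$, so this identity is used directly: integrating $y$ from $1$ to $\frac{c(t-v)}{u+cv}+1$ and invoking Theorem~\ref{styhjrtjf} yields the first line of \eqref{tryjujkr}. For $c\geqslant\cS$ the parameter $\muIG$ is negative, hence the second form of \eqref{ertehtrjt} is used, with $\mmu=-\muIG>0$. Since $\muIG^2=\mmu^2$ but $e^{\lambdaIG/\muIG}=e^{-\lambdaIG/\mmu}$, the conversion to the GIG density $f(y;\mmu,\lambdaIG,-\tfrac{3}{2})$ picks up an extra factor $e^{-2\lambdaIG/\mmu}$, giving the second line of \eqref{tryjujkr}.

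There is no real obstacle here, only bookkeeping: one must be careful with the normalization of the generalized inverse Gaussian density at index $-3/2$, which involves $\BesselK{-3/2}(\lambdaIG/\muIG)=\frac{\sqrt{\pi}}{\sqrt{2\lambdaIG/\muIG}}e^{-\lambdaIG/\muIG}(1+\muIG/\lambdaIG)$ and is responsible for the $(\lambdaIG+\muIG)$ in the denominator of the coefficient in \eqref{gyhjjryjk}. Inverting this factor produces the $(\lambdaIG+\muIG)/(\lambdaIG\muIG)$ prefactor in \eqref{tryjujkr} and distinguishes the $k=2$ case from the $k=0$ and $k=1$ cases already treated. Once that coefficient is correctly identified, the argument reduces to a direct application of Theorem~\ref{styhjrtjf}, and it suffices in the write-up to say, as in Theorem~\ref{qwergtrh}, that the proof is quite similar to that of Theorem~\ref{sdfghreh}.
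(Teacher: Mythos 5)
Your proposal is correct and follows essentially the same route as the paper: the paper's proof is simply "quite similar to Theorem~\ref{sdfghreh}", i.e.\ rewrite the Gaussian density via \eqref{ertehtrjt} as a constant multiple of $f(1+x;\muIG,\lambdaIG,-\tfrac{3}{2})$ (respectively $f(1+x;\mmu,\lambdaIG,-\tfrac{3}{2})$ with the factor $e^{-2\lambdaIG/\mmu}$ for $c\geqslant\cS$) and then invoke Theorem~\ref{styhjrtjf}. Your coefficient bookkeeping, including the factor $\sqrt{\lambdaIG/(2\pi(1+x))}$ in the Gaussian density and the $\BesselK{3/2}$ normalization producing $(\lambdaIG+\muIG)/(\lambdaIG\muIG)$, is exactly what makes that identification work.
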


\begin{proof}
The proof is quite similar to the proof of Theorem~\ref{sdfghreh}.
\end{proof}

Taking advantage of Theorem~\ref{styhjrtjf}, we rewrite \eqref{tryjujkr} in the
form similar to \eqref{srdfghrtjhr}, and thereafter, for all $c>0$, as
\begin{multline}\label{wqertrjnr}
\Elem{t}{2}(u,c,v)=\Big(1-cM+\frac{c^2D^2}{u+cv}\Big)
\bigg[\UGauss{0}{1}\Big(\frac{\sqrt{u+cv}}{cD\sqrt{x}}\big(x(1-cM)-1\big)\Big)
\\
-\frac{(u+cv)(1-cM)-c^2D^2}{(u+cv)(1-cM)+c^2D^2}
\exp\Big\{\frac{2(u+cv)}{c^2D^2}(1-cM)\Big\}
\UGauss{0}{1}\Big(-\frac{\sqrt{u+cv}}{cD\sqrt{x}}\big(1+x(1-cM)\big)\Big)
\\
+\frac{\sqrt{2(u+cv)}\,cD}{\sqrt{\pi x}((u+cv)(1-cM)+c^2D^2)}
\exp\Big\{\frac{(u+cv)}{c^2D^2}(1-cM)\Big\}
\\
\times\exp\bigg\{-\frac{u+cv}{2c^2D^2x}
\big(1+x^2(1-cM)^2\big)\bigg\}\bigg]\bigg|_{x=1}^{\frac{c(t-v)}{u+cv}+1}.
\end{multline}

\begin{theorem}\label{rtyhtrjr}
For $\cS=\frac{1}{M}$, $\lambdaIG=\frac{u+cv}{c^2D^2}>0$,
$\muIG=\frac{1}{1-cM}$, and $\mmu=-\muIG=\frac{1}{cM-1}$, we have
\begin{equation}\label{etryutk}
\Elem{t}{3}(u,c,v)=\begin{cases}
\frac{\lambdaIG^2+3\lambdaIG\muIG+3\muIG^2}{\muIG^2\lambdaIG^2}
F\big(x;\muIG,\lambdaIG,-\tfrac{5}{2}\big)
\Big|_{x=1}^{\frac{c(t-v)}{u+cv}+1},&0<c\leqslant\cS,
\\[6pt]
\frac{\lambdaIG^2+3\lambdaIG\mmu+3\mmu^2}{\mmu^2\lambdaIG^2}\exp\big\{-2\frac{\lambdaIG}{\mmu}\big\}
F\big(x;\mmu,\lambdaIG,-\tfrac{3}{2}\big)\Big|_{x=1}^{\frac{c(t-v)}{u+cv}+1},&c\geqslant\cS.
\end{cases}
\end{equation}
\end{theorem}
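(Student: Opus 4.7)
I would follow the template of the proofs of Theorems~\ref{sdfghreh}--\ref{gyjrtjr}, specialising the weight $(1+x)^{-k}$ to $k=3$. The first step is to rewrite the Gaussian p.d.f. appearing in $\Elem{t}{3}(u,c,v)$ in the form
\[
\Ugauss{cM(1+x)}{\frac{c^2D^2(1+x)}{u+cv}}(x)=\sqrt{\tfrac{\lambdaIG}{2\pi(1+x)}}\exp\Big\{-\tfrac{\lambdaIG((1+x)-\muIG)^2}{2\muIG^2(1+x)}\Big\},
\]
which is the identity underlying \eqref{ertehtrjt} (after restoring the Jacobian factor). Multiplying by the extra weight $(1+x)^{-3}$ produces the power $(1+x)^{-7/2}$, which together with the same exponential is exactly the shape of the generalised inverse Gaussian density $f(\,\cdot\,;\muIG,\lambdaIG,-\tfrac{5}{2})$ in \eqref{retyurtikt7}. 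Matching normalising constants then gives
\[
\tfrac{1}{(1+x)^{3}}\,\Ugauss{cM(1+x)}{\frac{c^2D^2(1+x)}{u+cv}}(x)=\tfrac{\lambdaIG^2+3\lambdaIG\muIG+3\muIG^2}{\muIG^2\lambdaIG^2}\,f\bigl(1+x;\muIG,\lambdaIG,-\tfrac{5}{2}\bigr),
\]
which is the algebraic core of the statement.

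\textbf{The two branches.} For $0<c\leqslant\cS$ one has $\muIG>0$, so the above identity is directly meaningful on the integration domain. Integrating in $x$ from $0$ to $\frac{c(t-v)}{u+cv}$ and changing variable to $y=1+x$ yields the antiderivative $F(y;\muIG,\lambdaIG,-\tfrac{5}{2})$ evaluated between $y=1$ and $y=\frac{c(t-v)}{u+cv}+1$, which is exactly the first branch of \eqref{etryutk}. For $c\geqslant\cS$ one has $\muIG<0$ and $\mmu=-\muIG>0$; I would use instead the second (equivalent) form of the exponent in \eqref{ertehtrjt}, namely complete the square around $\mmu$ rather than around $\muIG$. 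Since
\[
\tfrac{\lambdaIG((1+x)-\muIG)^2}{2\muIG^2(1+x)}=\tfrac{\lambdaIG((1+x)-\mmu)^2}{2\mmu^2(1+x)}+\tfrac{2\lambdaIG}{\mmu},
\]
this pulls out the overall factor $\exp\{-2\lambdaIG/\mmu\}$ and replaces $\muIG$ by $\mmu$ throughout the normalising constant, producing the second branch of \eqref{etryutk}.

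\textbf{Main obstacle.} The arithmetic is essentially mechanical once the template of Theorems~\ref{sdfghreh}--\ref{gyjrtjr} has been set up; the only step requiring genuine care is the sign bookkeeping at the critical value $c=\cS$, where $\muIG$ diverges and must be re-expressed through the positive parameter $\mmu$. This is precisely what the dual representation \eqref{ertehtrjt} is designed to handle, and no further analytic input is needed. Once the integrand has been identified as a scalar multiple of a generalised inverse Gaussian p.d.f., Theorem~\ref{wdrthytm} converts the resulting definite integral into the explicit combination of $\UGauss{0}{1}$'s that makes \eqref{etryutk} usable in practice, thereby completing the proof in parallel with \eqref{wqertgher}, \eqref{we5thrjt} and \eqref{wqertrjnr}.
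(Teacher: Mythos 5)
Your proposal is correct and follows exactly the route the paper intends: the paper's proof is declared "quite similar to the proof of Theorem~\ref{sdfghreh}", i.e.\ rewrite the Gaussian density via \eqref{ertehtrjt}, absorb the weight $(1+x)^{-3}$ into the power $(1+x)^{-7/2}$ of the generalized inverse Gaussian density \eqref{retyurtikt7}, match the normalising constant $\frac{\lambdaIG^2+3\lambdaIG\muIG+3\muIG^2}{\muIG^2\lambdaIG^2}$, and split into the two branches according to the sign of $1-cM$, pulling out $\exp\{-2\lambdaIG/\mmu\}$ when $c\geqslant\cS$. Note that your derivation yields $F\big(x;\mmu,\lambdaIG,-\tfrac{5}{2}\big)$ in the second branch, which indicates that the $-\tfrac{3}{2}$ printed in \eqref{etryutk} for $c\geqslant\cS$ is a typo.
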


\begin{proof}
The proof is quite similar to the proof of Theorem~\ref{sdfghreh}.
\end{proof}

Taking advantage of Theorem~\ref{wdrthytm}, we rewrite \eqref{etryutk} in the
form similar to \eqref{srdfghrtjhr}, and thereafter, for all $c>0$, in the form
similar to \eqref{wqertgher}, \eqref{we5thrjt}, and \eqref{wqertrjnr}. We skip
this formula which is straightforward, but cumbersome.

\section{Explicit expressions for $\expaN{t}(u,c,v)$}\label{rthytrjrt}

\subsection{Summand $\Int{t}{M}(u,c,v)=\int_{0}^{\frac{c(t-v)}{u+cv}}\frac{1}{1+x}
\,\Ugauss{cM(1+x)}{\frac{c^2D^2(1+x)}{u+cv}}(x)dx$}\label{tyjufktl}

\begin{theorem}
For $x>0$, $c>0$, $u>0$, $t>0$, $0<v<t$, we have
\begin{equation*}
\begin{aligned}
\Int{t}{M}(u,c,v)&\eqOK\bigg[\UGauss{0}{1}\Big(\tfrac{\sqrt{u+cv}}{cD\sqrt{x}}\big(x(1-cM)-1\big)\Big)
\\[-4pt]
&\hskip
28pt+\exp\Big\{2\frac{u+cv}{c^2D^2}(1-cM)\Big\}\UGauss{0}{1}\Big(-\tfrac{\sqrt{u+cv}}{cD
\sqrt{x}}\big(x(1-cM)+1\big)\Big)\bigg]\bigg|_{x=1}^{\frac{c(t-v)}{u+cv}+1}.
\end{aligned}
\end{equation*}
\end{theorem}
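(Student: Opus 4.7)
The plan is to reduce the statement to material already in place in Section~\ref{sdrthrjk}. First I compare the defining integrands and observe that
\[
\Int{t}{M}(u,c,v)=\Elem{t}{1}(u,c,v),
\]
since both equal the integral of $(1+x)^{-1}\Ugauss{cM(1+x)}{c^2D^2(1+x)/(u+cv)}(x)$ over $[0,\,c(t-v)/(u+cv)]$; this is Definition~\ref{rterherher} with $k=1$. The claim is then nothing other than the unified-form identity \eqref{we5thrjt}, and the proof consists of reassembling it.

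Starting from Theorem~\ref{qwergtrh} I obtain a piecewise expression for $\Elem{t}{1}(u,c,v)$ in terms of the c.d.f. $F(\cdot;\muIG,\lambdaIG,-\tfrac{1}{2})$ of the standard inverse Gaussian distribution, with $\lambdaIG=(u+cv)/(c^2D^2)$ and either $\muIG=1/(1-cM)$ (for $0<c\leqslant\cS$) or $\mmu=1/(cM-1)$ (for $c\geqslant\cS$). Applying Theorem~\ref{sdtyhjtm} replaces each of these c.d.f.'s by a pair of normal-c.d.f. summands. Substituting $\sqrt{\lambdaIG/x}=\sqrt{u+cv}/(cD\sqrt{x})$ and $x/\muIG=x(1-cM)$ in the subcritical branch yields the claimed formula by direct identification.

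The only step requiring care is matching the supercritical case. There the prefactor $\exp\{-2\lambdaIG/\mmu\}$ of Theorem~\ref{qwergtrh} combines with the internal exponential $\exp\{2\lambdaIG/\mmu\}$ appearing inside Theorem~\ref{sdtyhjtm}, so the exponential weight migrates from the second normal summand to the first. Using the sign identities
\[
\sqrt{\lambdaIG/x}\,(x/\mmu-1)=-\sqrt{\lambdaIG/x}\,(x(1-cM)+1),\qquad
-\sqrt{\lambdaIG/x}\,(x/\mmu+1)=\sqrt{\lambdaIG/x}\,(x(1-cM)-1),
\]
together with $-2\lambdaIG/\mmu=2(u+cv)(1-cM)/(c^2D^2)$, this swap reproduces exactly the subcritical expression, so a single closed formula covers all $c>0$. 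This sign-matching is the only algebraically delicate point; it is also the reason the unified form carries no singularity at $c=\cS$, even though each piecewise branch of Theorems \ref{qwergtrh}--\ref{sdtyhjtm} separately contains a factor $(1-cM)^{-1}$. After this verification, setting the endpoints $x=1$ and $x=c(t-v)/(u+cv)+1$ completes the proof.
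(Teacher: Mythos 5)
Your proposal is correct and follows essentially the same route as the paper: identify $\Int{t}{M}(u,c,v)=\Elem{t}{1}(u,c,v)$, invoke Theorem~\ref{qwergtrh} together with Theorem~\ref{sdtyhjtm}, and recover the unified expression \eqref{we5thrjt} valid for all $c>0$. The only difference is that you spell out the supercritical sign-matching (the migration of the factor $\exp\{-2\lambdaIG/\mmu\}$ between the two normal summands), a verification the paper asserts without detail when passing from \eqref{sdfghdtn} to \eqref{we5thrjt}.
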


\begin{proof}
Plainly, $\Int{t}{M}(u,c,v)$ defined in \eqref{wqerthrrthj} is equal to
$\Elem{t}{1}(u,c,v)$. It is just found in Theorem~\ref{qwergtrh}, and equation
that we have to prove is just written in \eqref{we5thrjt}.
\end{proof}

\begin{corollary}\label{wrgtreger}
We have
\begin{equation*}
\begin{aligned}
\Int{t}{M}(u,\cS,v)&\eqOK 2\bigg[1-\UGauss{0}{1}\Big(\tfrac{\sqrt{u+\cS v}}{\cS
D\sqrt{x}}\Big)\bigg]\bigg|_{x=1}^{\frac{\cS (t-v)}{u+\cS v}+1},
\end{aligned}
\end{equation*}
and $\Int{\infty}{M}(u,\cS,v)\eqOK 2\UGauss{0}{1}\Big(\tfrac{\sqrt{u+\cS
v}}{\cS D}\Big)-1$. Bearing in mind that
$\lim_{x\to\infty}e^{x^2/2}\UGauss{0}{1}(-x)=0$, we have
\begin{equation*}
\begin{aligned}
\Int{t}{M}(u,0,v)&\eqOK\UGauss{0}{1}\Big(\tfrac{t-Mu}{D\sqrt{u}}\Big)
-\UGauss{0}{1}\Big(-\tfrac{M}{D}\sqrt{u}\Big),
\end{aligned}
\end{equation*}
and $\Int{\infty}{M}(u,0,v)\eqOK\UGauss{0}{1}\Big(\tfrac{M}{D}\sqrt{u}\Big)$.
\end{corollary}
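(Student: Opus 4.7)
The plan is to specialize the closed-form expression for $\Int{t}{M}(u,c,v)$ obtained in the preceding theorem, namely
\[
\Int{t}{M}(u,c,v)=\Bigl[\UGauss{0}{1}(g_1(c,x))+e^{h(c)}\UGauss{0}{1}(g_2(c,x))\Bigr]\Big|_{x=1}^{\frac{c(t-v)}{u+cv}+1},
\]
where $g_1(c,x)=\tfrac{\sqrt{u+cv}}{cD\sqrt{x}}(x(1-cM)-1)$, $g_2(c,x)=-\tfrac{\sqrt{u+cv}}{cD\sqrt{x}}(x(1-cM)+1)$ and $h(c)=\tfrac{2(u+cv)(1-cM)}{c^2D^2}$, at the two special values $c=\cS$ and $c=0$; the $t=\infty$ variants then follow by sending the upper endpoint $x\to\infty$.

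For $c=\cS=1/M$ the factor $1-cM$ vanishes, so $h(\cS)=0$ and $g_1=g_2=-\tfrac{\sqrt{u+\cS v}}{\cS D\sqrt{x}}$. The two bracketed summands then coincide, and the identity $\UGauss{0}{1}(-z)=1-\UGauss{0}{1}(z)$ immediately yields the first claim. Letting $t\to\infty$ drives the upper endpoint's argument to $0$ with $\UGauss{0}{1}(0)=1/2$, which after subtracting the $x=1$ value produces $\Int{\infty}{M}(u,\cS,v)=2\UGauss{0}{1}(\tfrac{\sqrt{u+\cS v}}{\cS D})-1$.

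The case $c\to 0^{+}$ is the main obstacle, because both $h(c)$ and $-g_2(c,x)$ diverge like $1/c$ and the second summand is of indeterminate form $\infty\cdot 0$. The decisive algebraic simplification is
\[
h(c)-\tfrac{1}{2}g_2(c,x)^2=-\tfrac{1}{2}g_1(c,x)^2,
\]
verified by setting $y=1-cM$ and expanding $(xy+1)^2-4xy=(xy-1)^2$. Writing $e^{h(c)}\UGauss{0}{1}(g_2)=e^{-g_1^2/2}\bigl[e^{g_2^2/2}\UGauss{0}{1}(g_2)\bigr]$ and invoking the hinted limit $\lim_{x\to\infty}e^{x^2/2}\UGauss{0}{1}(-x)=0$ (applicable since $-g_2\to\infty$ while $g_1$ stays bounded at both endpoints) kills the exponential-times-tail term. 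What survives is $\UGauss{0}{1}(g_1(0^{+},x))\big|_{x=1}^{\frac{c(t-v)}{u+cv}+1}$; direct evaluation of $g_1$ at the two endpoints as $c\to 0^{+}$ yields the stated $c=0$ formula, and $t\to\infty$ further reduces the upper endpoint to $\UGauss{0}{1}(+\infty)=1$, giving $\UGauss{0}{1}(M\sqrt{u}/D)$ after combining with the lower-endpoint contribution. The identity $h-g_2^2/2=-g_1^2/2$ absorbs the $1/c^2$ divergence exactly; without it one would have to track that cancellation through delicate Taylor expansions of $h(c)$ and $g_2(c,x)^2$.
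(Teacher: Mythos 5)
Your proposal is correct and follows essentially the route the paper intends: the corollary is read off from the closed form of $\Int{t}{M}(u,c,v)$ in the preceding theorem by specializing to $c=\cS$ (where $1-cM=0$, so the two bracketed terms coincide) and by letting $c\to 0^{+}$, the latter resting exactly on the cancellation $h(c)-\tfrac12 g_2(c,x)^2=-\tfrac12 g_1(c,x)^2$ that makes the hinted limit $\lim_{x\to\infty}e^{x^2/2}\UGauss{0}{1}(-x)=0$ applicable to the exponentially weighted tail term. One small remark: a literal evaluation of the upper endpoint as $c\to 0^{+}$ gives the argument $\tfrac{t-v-Mu}{D\sqrt{u}}$ rather than $\tfrac{t-Mu}{D\sqrt{u}}$, so your computation reproduces the displayed $c=0$ formula only for $v=0$; this is a slip in the corollary's statement rather than a flaw in your argument.
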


\begin{figure}[t]
\includegraphics[scale=0.8]{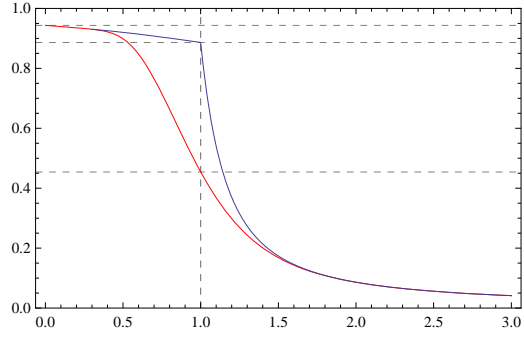}
\caption{\small Graphs ($X$-axis is $c$) of $\Int{t}{M}(u,c,v)$ with $t=100$,
and $\Int{t}{M}(u,c,v)$ with $t=\infty$. Here $v=0$, $u=15$, $M=1$, $D^2=6$.
Horizontal lines are $0.943$, $0.886$, and $0.454$.}\label{sdrtgerher}
\end{figure}

\begin{lemma}\label{srdthrjhrr}
For $0<c<\cS$, the function $\Int{\infty}{M}(u,c,v)$ is monotone decreasing, as
$c$ increases.
\end{lemma}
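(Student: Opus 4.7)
My plan is to differentiate the explicit form of $\Int{\infty}{M}(u,c,v)$ obtained by passing $t\to\infty$ in \eqref{we5thrjt}, then show the derivative is negative on $(0,\cS)$. Since $1-cM>0$ on this range, the upper-limit evaluations in \eqref{we5thrjt} give $\UGauss{0}{1}(+\infty)=1$ and $\UGauss{0}{1}(-\infty)=0$, so the formula collapses to
\begin{equation*}
\Int{\infty}{M}(u,c,v) \;=\; \UGauss{0}{1}(\alpha) \;-\; e^{\gamma}\,\UGauss{0}{1}(-\beta),
\end{equation*}
where for brevity
\begin{equation*}
\alpha := \tfrac{M\sqrt{u+cv}}{D},\qquad
\beta := \tfrac{(2-cM)\sqrt{u+cv}}{cD},\qquad
\gamma := \tfrac{2(u+cv)(1-cM)}{c^{2}D^{2}}.
\end{equation*}
A short calculation shows $\beta^{2}-\alpha^{2}=2\gamma$, which yields the reflection-type identity $e^{\gamma}\,\Ugauss{0}{1}(\beta)=\Ugauss{0}{1}(\alpha)$; this is the algebraic miracle that makes the rest clean.

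Differentiating both sides in $c$ and using the identity to merge the two normal-density contributions into one leaves the compact expression
\begin{equation*}
\tfrac{d}{dc}\,\Int{\infty}{M}(u,c,v) \;=\; \Ugauss{0}{1}(\alpha)\,(\alpha'+\beta') \;-\; \gamma'\, e^{\gamma}\,\UGauss{0}{1}(-\beta).
\end{equation*}
From $\alpha+\beta = 2\sqrt{u+cv}/(cD)$ one obtains $\alpha'+\beta' = -(cv+2u)/(c^{2}D\sqrt{u+cv})<0$. A direct computation gives $\gamma' = 2(cuM-cv-2u)/(c^{3}D^{2})$, and on $(0,\cS)$ this is negative because $cM\le 1$ forces $cuM\le u < cv+2u$. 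Hence the first summand of $d\Int{\infty}{M}/dc$ is negative and the second positive, and the proof reduces to showing the negative summand dominates.

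The main obstacle will be this final size comparison. The natural tool is Mills' inequality $\UGauss{0}{1}(-\beta) < \Ugauss{0}{1}(\beta)/\beta$ for $\beta>0$, which combined with the reflection identity becomes $e^{\gamma}\UGauss{0}{1}(-\beta) < \Ugauss{0}{1}(\alpha)/\beta$. Plugging this in and dividing through by $\Ugauss{0}{1}(\alpha)$, the target inequality $d\Int{\infty}{M}/dc<0$ reduces, after substituting the explicit formulas, to a purely algebraic comparison between $(cv+2u)(2-cM)$ and $2(cv+2u-cuM)$ that is short to verify, and the strictness of Mills for $\beta>0$ promotes the comparison to a strict inequality. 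If in a particular parameter region the raw Mills bound is too loose, the fallback is to use the sharper truncated expansion $\UGauss{0}{1}(-\beta) \le \Ugauss{0}{1}(\beta)\bigl(\beta^{-1}-\beta^{-3}+3\beta^{-5}\bigr)$, which I expect to suffice since $\beta\to\infty$ as $c\to 0^{+}$ and $\beta$ is bounded below by $\alpha(\cS)$ as $c\to\cS$; this tightening of Mills, rather than the sign analysis, is what I expect to be the technical heart of the argument.
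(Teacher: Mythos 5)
Your setup is correct and, up to notation, it is exactly the paper's argument: the closed form $\Int{\infty}{M}(u,c,v)=\UGauss{0}{1}(\alpha)-e^{\gamma}\UGauss{0}{1}(-\beta)$, the identity $\beta^{2}-\alpha^{2}=2\gamma$ giving $e^{\gamma}\Ugauss{0}{1}(\beta)=\Ugauss{0}{1}(\alpha)$, the derivative $\Ugauss{0}{1}(\alpha)(\alpha'+\beta')-\gamma'e^{\gamma}\UGauss{0}{1}(-\beta)$, and Mills' ratio are precisely the ingredients of the paper's proof. The gap is the step you declare ``short to verify''. After inserting Mills and the reflection identity, negativity of your upper bound requires $\beta\,|\alpha'+\beta'|\geqslant|\gamma'|$, i.e. $(2-cM)(cv+2u)\geqslant 2(cv+2u-cuM)$; but $(2-cM)(cv+2u)-2(cv+2u-cuM)=-c^{2}vM$, so for $v>0$ the comparison goes the wrong way and the Mills bound leaves a positive, hence inconclusive, estimate. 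Only at $v=0$ does it become an identity, and then the strictness of Mills ($1-\beta\Mi(\beta)>0$) finishes the proof --- which is exactly what the paper does: its proof explicitly confines itself to $v=0$ ``for brevity'' and concludes from $1-\xi\Mi(\xi)>0$ at $\xi=\tfrac{(2-cM)\sqrt{u}}{cD}$, your computation being the same one in different notation.

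Your fallback (the sharper bound $\Mi(\beta)\leqslant\beta^{-1}-\beta^{-3}+3\beta^{-5}$) cannot repair the case $v>0$, because the quantity you want to be negative can genuinely be positive. Writing the derivative as $\Ugauss{0}{1}(\alpha)\big[(\alpha'+\beta')+|\gamma'|\,\Mi(\beta)\big]$ and taking $M=D=1$, $v=10$, $u=100$, $c=\tfrac12<\cS=1$, one gets $|\alpha'+\beta'|\approx 80.02$ while $|\gamma'|\,\Mi(\beta)\approx 80.59$, so the bracket is positive and $\Int{\infty}{M}$ is (by an exponentially small but nonzero amount) increasing there. Equivalently, $\Int{\infty}{M}(u,c,v)=1-\Ugauss{0}{1}(\alpha)\big(\Mi(\alpha)+\Mi(\beta)\big)$, and for $v>0$ and large $u$ the dominant $c$-dependence is through $\Ugauss{0}{1}(\alpha)$ with $\alpha=M\sqrt{u+cv}/D$ increasing in $c$, which pushes $\Int{\infty}{M}$ up, not down. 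So no refinement of Mills can close your argument at general $v$; to obtain what is actually provable you should, as the paper does, restrict to $v=0$, where your chain collapses to $\frac{\partial}{\partial c}\Int{\infty}{M}(u,c,0)=-\frac{2\sqrt{u}}{c^{2}D}\,e^{\gamma}\,\Ugauss{0}{1}(\beta)\big(1-\beta\Mi(\beta)\big)$ and the strict Mills inequality gives the result.
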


\begin{proof}
For $0<c<\cS$, let us show that $\frac{\partial}{\partial
c}\Int{\infty}{M}(u,c,v)<0$. For brevity, we confine ourselves with the case
$v=0$ and differentiate $\Int{\infty}{M}(u,c,0)$ straightforwardly. We have
\begin{multline*}
\frac{\partial}{\partial c}\Int{\infty}{M}(u,c,0)
=-\frac{2}{c^2D}\exp\bigg\{\frac{2(1-cM)u}{c^2D^2}\bigg\}\sqrt{u}\,
\Ugauss{0}{1}(\xi)\bigg(1-\xi\tfrac{\UGauss{0}{1}(-\xi)}
{\Ugauss{0}{1}(\xi)}\bigg)\bigg|_{\xi=\frac{2-cM}{cD}\sqrt{u}}.
\end{multline*}
Addressing to Mills' ratio $\Mi(\xi)=\tfrac{\UGauss{0}{1}(-\xi)}
{\Ugauss{0}{1}(\xi)}$, and bearing in mind that $1-\xi\Mi(\xi)>0$ for all
$\xi\in\R$, we get the required result.
\end{proof}

Taking advantage of Lemma~\ref{srdthrjhrr}, we observe that
$\Int{\infty}{M}(u,c,v)$ is sandwiched between
\begin{equation*}
\Int{\infty}{M}(u,0,v)=\UGauss{0}{1}\big(\tfrac{M}{D}\sqrt{u}\big)
\end{equation*}
and
\begin{equation*}
\Int{\infty}{M}(u,\cS,v)=2\UGauss{0}{1}\big(\tfrac{M}{D}\sqrt{u+\cS v}\big)-1
\end{equation*}
all over $0<c<\cS$. Plainly, $\UGauss{0}{1}\big(\frac{M}{D}\sqrt{u}\big)\to 1$
and $2\UGauss{0}{1}\big(\frac{M}{D}\sqrt{u+\cS v}\big)-1\to 1$, as
$u\to\infty$, and the function $\Int{\infty}{M}(u,c,v)$ approaches $1$
uniformly on $0<c<\cS$, as $u\to\infty$.

In Fig.~\ref{sdrtgerher}, we draw the functions $\Int{t}{M}(u,c,v)$ and
$\Int{\infty}{M}(u,c,v)$ for $v=0$, $M=1$, $D^2=6$, $t=100$, and $u=15$. The
former is smooth and monotone decreasing on the entire range of $c$, while the
latter is monotone decreasing, but has a nonsmoothness in the point $c=\cS$.
All over $0<c<\cS$, the function $\Int{\infty}{M}(u,c,v)$ is sandwiched between
$\Int{\infty}{M}(u,0,v)=0.943$ and $\Int{\infty}{M}(u,\cS,v)=0.886$ drawn by
dashed horizontal lines. The third dashed horizontal line is
$\Int{t}{M}(u,\cS,v)=0.454$.

\subsection{Summand $\Int{t}{F}(u,c,v)=\int_{0}^{\frac{c(t-v)}{u+cv}}\frac{x-Mc(1+x)}{(1+x)^2}
\Ugauss{cM(1+x)}{\frac{c^2D^2(1+x)}{u+cv}}(x)dx$}\label{tyjrttrtrtuj}

\begin{theorem}
For $c>0$, $u>0$, $t>0$, $0<v<t$, we have
\begin{equation*}
\begin{aligned}
\Int{t}{F}(u,c,v)&\eqOK-\frac{c^2D^2}{u+cv}\bigg[\UGauss{0}{1}\Big(\tfrac{\sqrt{u+cv}}{cD\sqrt{x}}
\big(x(1-cM)-1\big)\Big)
\\[-2pt]
&\hskip 32pt+\exp\Big\{2\frac{u+cv}{c^2D^2}(1-cM)\Big\}
\UGauss{0}{1}\Big(-\tfrac{\sqrt{u+cv}}{cD\sqrt{x}}\big(x(1-cM)+1\big)\Big)
\bigg]\bigg|_{x=1}^{\frac{c(t-v)}{u+cv}+1}
\\
&+2(1-cM)\exp\Big\{2\frac{u+cv}{c^2D^2}(1-cM)\Big\}\UGauss{0}{1}\Big(-\tfrac{\sqrt{u+cv}}{cD\sqrt{x}}
\big(x(1-cM)+1\big)\Big)\bigg|_{x=1}^{\frac{c(t-v)}{u+cv}+1}
\\
&-\frac{2cD}{\sqrt{2\pi x(u+cv)}}\exp\Big\{-\frac{u+cv}{2xc^2D^2}
\big(x(1-cM)-1\big)^2\Big\}\bigg|_{x=1}^{\frac{c(t-v)}{u+cv}+1}.
\end{aligned}
\end{equation*}
\end{theorem}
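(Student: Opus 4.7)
The plan is to reduce $\Int{t}{F}(u,c,v)$ to the elementary components $\Elem{t}{1}(u,c,v)$ and $\Elem{t}{2}(u,c,v)$, for which closed-form expressions have already been derived in \eqref{we5thrjt} and \eqref{wqertrjnr}, respectively. The reduction rests on the algebraic identity
$$\frac{x-Mc(1+x)}{(1+x)^{2}}=\frac{(1-cM)(1+x)-1}{(1+x)^{2}}=\frac{1-cM}{1+x}-\frac{1}{(1+x)^{2}},$$
obtained by writing $x=(1+x)-1$. Substituting this into the definition of $\Int{t}{F}(u,c,v)$ and matching with Definition~\ref{rterherher} yields
$$\Int{t}{F}(u,c,v)=(1-cM)\,\Elem{t}{1}(u,c,v)-\Elem{t}{2}(u,c,v),$$
which is the only non-routine step; the rest is bookkeeping.

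Next, I would plug in \eqref{we5thrjt} and \eqref{wqertrjnr}, abbreviating $\alpha=1-cM$ and $\beta=c^{2}D^{2}/(u+cv)$, so that the prefactor of the bracket in \eqref{wqertrjnr} becomes $\alpha+\beta=\big((u+cv)\alpha+c^{2}D^{2}\big)/(u+cv)$. The coefficient of $\UGauss{0}{1}\!\left(\tfrac{\sqrt{u+cv}}{cD\sqrt{x}}(x(1-cM)-1)\right)$ simplifies to $\alpha-(\alpha+\beta)=-\beta=-c^{2}D^{2}/(u+cv)$, producing the leading prefactor of the claim. For the companion term $\UGauss{0}{1}\!\left(-\tfrac{\sqrt{u+cv}}{cD\sqrt{x}}(x(1-cM)+1)\right)$ weighted by $E=\exp\{2(u+cv)(1-cM)/(c^{2}D^{2})\}$, the contribution from $(1-cM)\Elem{t}{1}$ is $\alpha$, while the contribution from $-\Elem{t}{2}$ is
$$(\alpha+\beta)\cdot\frac{(u+cv)\alpha-c^{2}D^{2}}{(u+cv)\alpha+c^{2}D^{2}}=\alpha-\beta,$$
the factor $(u+cv)\alpha+c^{2}D^{2}$ cancelling telescopically. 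Adding gives $2\alpha-\beta=2(1-cM)-c^{2}D^{2}/(u+cv)$, and splitting off $-\beta$ to accompany the $A$-coefficient produces exactly the first bracketed term of the claim, while the remaining $2(1-cM)$ yields the second line.

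Finally, the pure-exponential summand in the target formula arises only from $-\Elem{t}{2}$: the prefactor $\alpha+\beta=\big((u+cv)\alpha+c^{2}D^{2}\big)/(u+cv)$ again cancels the denominator $(u+cv)\alpha+c^{2}D^{2}$ in \eqref{wqertrjnr}, leaving $\sqrt{2}\,cD/\sqrt{\pi x(u+cv)}=2cD/\sqrt{2\pi x(u+cv)}$, with a negative sign. The two exponential factors merge via the completion of the square
$$\frac{u+cv}{c^{2}D^{2}}(1-cM)-\frac{u+cv}{2c^{2}D^{2}x}\bigl(1+x^{2}(1-cM)^{2}\bigr)=-\frac{u+cv}{2c^{2}D^{2}x}\bigl(x(1-cM)-1\bigr)^{2},$$
delivering the exponent in the claimed formula. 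The main obstacle is purely notational: keeping track of which terms contribute to which of the three groups and verifying the two cancellations (the ratio simplification $(\alpha+\beta)\cdot\tfrac{(u+cv)\alpha-c^{2}D^{2}}{(u+cv)\alpha+c^{2}D^{2}}=\alpha-\beta$ and the completion of the square above) that make the answer so compact.
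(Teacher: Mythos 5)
Your proposal is correct and takes essentially the same route as the paper: the decisive step, the decomposition $\Int{t}{F}(u,c,v)=(1-cM)\Elem{t}{1}(u,c,v)-\Elem{t}{2}(u,c,v)$, is exactly the paper's observation, and substituting the unified expressions \eqref{we5thrjt} and \eqref{wqertrjnr} amounts to the paper's own passage through Theorems~\ref{qwergtrh} and~\ref{gyjrtjr} followed by Theorems~\ref{sdtyhjtm} and~\ref{styhjrtjf}, with the case split at $c=\cS$ already absorbed. Your bookkeeping (the coefficient $-c^2D^2/(u+cv)$ on the first bracket, the cancellation leaving $2(1-cM)$, and the completion of the square merging the exponentials) checks out.
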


\begin{figure}[t]
\includegraphics[scale=0.8]{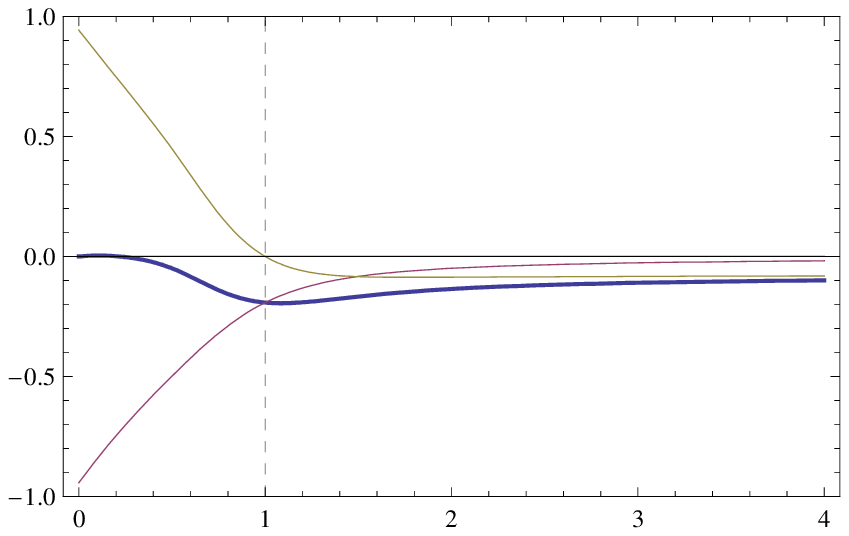}
\caption{\small Graphs ($X$-axis is $c$) of $\Int{t}{F}(u,c,v)$ and of
$(1-Mc)\Elem{t}{1}(u,c,v)$ and $-\Elem{t}{2}(u,c,v)$. Here $t=100$, $v=0$,
$u=15$, $M=1$, $D^2=6$.}\label{dfgjhkm}
\end{figure}

\begin{proof}
Observing that $\Int{t}{F}(u,c,v)=(1-cM)\Elem{t}{1}(u,c,v)-\Elem{t}{2}(u,c,v)$
and bearing in mind Theorems~\ref{qwergtrh} and~\ref{gyjrtjr}, we have
\begin{equation*}
\Int{t}{F}(u,c,v)=\begin{cases} \frac{1}{\muIG}
F\big(x;\muIG,\lambdaIG,-\tfrac{1}{2}\big)-\frac{(\lambdaIG+\muIG)}{\muIG\lambdaIG}
F\big(x;\muIG,\lambdaIG,-\tfrac{3}{2}\big)\Big|_{x=1}^{\frac{c(t-v)}{u+cv}+1},&0<c\leqslant\cS,
\\[12pt]
-\frac{1}{\mmu}\exp\big\{-2\,\tfrac{\lambdaIG}{\mmu}\big\}
F\big(x;\mmu,\lambdaIG,-\tfrac{1}{2}\big)
\\
-\frac{(\lambdaIG+\mmu)}{\lambdaIG\mmu}\exp\big\{-2\frac{\lambdaIG}{\mmu}\big\}
F\big(x;\mmu,\lambdaIG,-\tfrac{3}{2}\big)\Big|_{x=1}^{\frac{c(t-v)}{u+cv}+1},&
c\geqslant\cS,
\end{cases}
\end{equation*}
and the result follows from Theorems~\ref{sdtyhjtm} and \ref{styhjrtjf}.
\end{proof}

\begin{remark}
We have $\Int{t}{F}(u,c,v)=\underline{O}((u+cv)^{-1})$, as $u+cv\to\infty$.
\end{remark}

In Fig.~\ref{dfgjhkm}, we draw the function $\Int{t}{F}(u,c,v)$ (thick line)
and the corresponding elementary components $(1-Mc)\Elem{t}{1}(u,c,v)$ and
$-\Elem{t}{2}(u,c,v)$ for $v=0$, $M=1$, $D^2=6$, $t=100$, and $u=15$.

\subsection{Summand $\Int{t}{S}(u,c,v)=\frac{u+cv}{c^2D^2}\int_{0}^{\frac{c(t-v)}{u+cv}}\frac{(x-\frac{\E{T}}{\E{Y}}c(1+x))^3}{(1+x)^3}
\Ugauss{cM(1+x)}{\frac{c^2D^2(1+x)}{u+cv}}(x)dx$}\label{ertyirtew}

\begin{theorem}
For $c>0$, $u>0$, $t>0$, $0<v<t$, we have
\begin{equation*}
\begin{aligned}
\Int{t}{S}(u,c,v)&\eqOK-\frac{3\,c^2D^2}{u+cv}\bigg[\UGauss{0}{1}\Big(\tfrac{\sqrt{u+cv}}{cD\sqrt{x}}
\big(x(1-cM)-1\big)\Big)
\\[-4pt]
&\hskip 28pt+\exp\Big\{2\frac{u+cv}{c^2D^2}(1-cM)\Big\}
\UGauss{0}{1}\Big(-\tfrac{\sqrt{u+cv}}{cD\sqrt{x}}\big(x(1-cM)+1\big)\Big)\bigg]
\bigg|_{x=1}^{\frac{c(t-v)}{u+cv}+1}
\\
&+2(1-cM)\Big(3-4\frac{u+cv}{c^2D^2}(1-cM)\Big)
\\[-4pt]
&\hskip
28pt\times\exp\Big\{2\frac{u+cv}{c^2D^2}(1-cM)\Big\}\UGauss{0}{1}\Big(-\tfrac{\sqrt{u+cv}}{cD\sqrt{x}}
\big(x(1-cM)+1\big)\Big)\bigg|_{x=1}^{\frac{c(t-v)}{u+cv}+1}
\\
&-\frac{\sqrt{2}\,cD}{\sqrt{\pi}\sqrt{u+cv}\,x^{3/2}}\Big(3
\Big(1-\frac{u+cv}{c^2D^2}(1-cM)\Big)x+\frac{u+cv}{c^2D^2}\Big)
\\[-4pt]
&\hskip 28pt\times\exp\Big\{-\frac{u+cv}{2xc^2D^2} \big(x(1-cM)-1\big)^2\Big\}
\bigg|_{x=1}^{\frac{c(t-v)}{u+cv}+1}.
\end{aligned}
\end{equation*}
\end{theorem}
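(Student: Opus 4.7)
The plan is to reduce $\Int{t}{S}(u,c,v)$ to the elementary components $\Elem{t}{k}(u,c,v)$, $k=0,1,2,3$, introduced in Definition~\ref{rterherher}, and then invoke Theorems~\ref{sdfghreh}, \ref{qwergtrh}, \ref{gyjrtjr}, and \ref{rtyhtrjr} in the same spirit as the proof already given for $\Int{t}{F}(u,c,v)$.

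First I would use the algebraic identity $x-Mc(1+x)=(1+x)(1-cM)-1$. Dividing by $1+x$ and cubing yields
\begin{equation*}
\frac{(x-Mc(1+x))^3}{(1+x)^3}=\Big((1-cM)-\frac{1}{1+x}\Big)^{3}
=\sum_{k=0}^{3}\binom{3}{k}(-1)^{k}(1-cM)^{3-k}\frac{1}{(1+x)^{k}}.
\end{equation*}
Inserting this into the definition of $\Int{t}{S}(u,c,v)$ and matching each resulting integral against Definition~\ref{rterherher} produces the clean decomposition
\begin{equation*}
\Int{t}{S}(u,c,v)=\frac{u+cv}{c^2D^2}\sum_{k=0}^{3}\binom{3}{k}(-1)^{k}(1-cM)^{3-k}\Elem{t}{k}(u,c,v),
\end{equation*}
which is the exact analogue, one degree higher, of the identity $\Int{t}{F}=(1-cM)\Elem{t}{1}-\Elem{t}{2}$ used in the previous proof.

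Second, I would plug in the unified closed forms for the elementary components: \eqref{wqertgher}, \eqref{we5thrjt}, \eqref{wqertrjnr} supply them for $k=0,1,2$, and Theorem~\ref{rtyhtrjr} combined with Theorem~\ref{wdrthytm} supplies the analogous unified formula for $k=3$. Using the abbreviations $\lambdaIG=(u+cv)/(c^2D^2)$ and $\muIG=1/(1-cM)$, each $\Elem{t}{k}$ splits into three blocks: (a) a term with $\UGauss{0}{1}\big(\sqrt{\lambdaIG/x}(x/\muIG-1)\big)$, (b) a term with $e^{2\lambdaIG/\muIG}\UGauss{0}{1}\big(-\sqrt{\lambdaIG/x}(x/\muIG+1)\big)$ weighted by a rational function of $\lambdaIG$ and $\muIG$, and (for $k=2,3$) (c) a purely exponential term. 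Collecting the binomial sum block by block, the leading contributions of type (a) and (b) collapse by the binomial identity, and what survives is precisely the three displayed lines: the $-3c^2D^2/(u+cv)$ bracket (type (a) plus the leading type (b) piece), the $2(1-cM)\big(3-4(u+cv)(1-cM)/(c^2D^2)\big)$ bracket (the surviving $\lambdaIG^{-1}$- and $\lambdaIG^{0}$-corrections to type (b) coming from $\Elem{t}{2}$ and $\Elem{t}{3}$), and the explicit exponential factor (combined type (c)).

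The main obstacle is this final collection step. Although purely algebraic, it requires tracking coefficients that are polynomial in $\lambdaIG$ and $\lambdaIG^{-1}$ across roughly a dozen terms, and checking that the $\lambdaIG^{1}$-contributions to the coefficient of the $e^{2\lambdaIG/\muIG}\UGauss{0}{1}(\cdot)$-block cancel exactly, leaving only the $\lambdaIG^{0}$- and $\lambdaIG^{-1}$-terms exhibited in the statement. A convenient shortcut, already used by the author after Theorem~\ref{wdrthytm}, is verification by direct differentiation in $t$: the derivative of the proposed right-hand side must equal $\tfrac{c}{u+cv}\cdot\tfrac{(x-Mc(1+x))^3}{(1+x)^3}\Ugauss{cM(1+x)}{c^2D^2(1+x)/(u+cv)}(x)\cdot\tfrac{u+cv}{c^2D^2}$ at $x=c(t-v)/(u+cv)$, and this check is essentially pattern-matching of exponents, bypassing the bulk of the bookkeeping.
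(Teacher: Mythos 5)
Your proposal is correct and follows essentially the paper's own route: your binomial decomposition $\Int{t}{S}(u,c,v)=\frac{u+cv}{c^2D^2}\sum_{k=0}^{3}\binom{3}{k}(-1)^{k}(1-cM)^{3-k}\Elem{t}{k}(u,c,v)$ is exactly the paper's \eqref{dftyjhfgm}, and the remaining step (substituting the closed forms of $\Elem{t}{0},\dots,\Elem{t}{3}$ obtained from Theorems~\ref{sdfghreh}--\ref{rtyhtrjr} together with Theorems~\ref{sdrwhgtrjn}--\ref{wdrthytm}, then collecting terms) is the same ``tedious but straightforward'' bookkeeping the paper performs, with your differentiation-in-$t$ check being a legitimate shortcut for verifying the collected result.
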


\begin{figure}[t]
\includegraphics[scale=0.8]{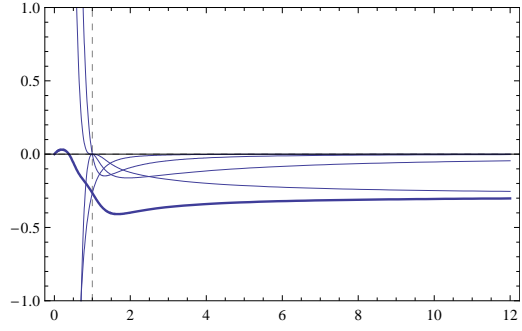}
\caption{\small Graphs ($X$-axis is $c$) of $\Int{t}{S}(u,c,v)$ and of
elementary components in \eqref{dftyjhfgm}. Here $t=100$, $v=0$, $u=15$, $M=1$,
$D^2=6$.}\label{srdtrhyjr}
\end{figure}

\begin{proof}
Observing that $\Int{t}{S}(u,c,v)$ equals the sum of elementary components
\begin{multline}\label{dftyjhfgm}
\frac{(u+cv)}{c^2D^2}(1-Mc)^3\Elem{t}{0}(u,c,v)-3\frac{(u+cv)}{c^2D^2}(1-Mc)^2\Elem{t}{1}(u,c,v)
\\
+3\frac{(u+cv)}{c^2D^2}(1-Mc)\Elem{t}{2}(u,c,v)-\frac{(u+cv)}{c^2D^2}\Elem{t}{3}(u,c,v),
\end{multline}
and bearing in mind Theorems~\ref{sdfghreh}--\ref{rtyhtrjr}, we have
\begin{equation*}
\Int{t}{S}(u,c,v)=\begin{cases}
\frac{\lambdaIG}{\muIG^2}F\big(x;\muIG,\lambdaIG,\tfrac{1}{2}\big)
\Big|_{x=1}^{\frac{c(t-v)}{u+cv}+1}
-\frac{3\lambdaIG}{\muIG^2}F\big(x;\muIG,\lambdaIG,-\tfrac{1}{2}\big)
\Big|_{x=1}^{\frac{c(t-v)}{u+cv}+1}
\\
+\frac{3\lambdaIG(\lambdaIG+\muIG)}{\lambdaIG\muIG^2}
F\big(x;\muIG,\lambdaIG,-\tfrac{3}{2}\big)\Big|_{x=1}^{\frac{c(t-v)}{u+cv}+1}
\\
-\frac{\lambdaIG^2+3\lambdaIG
\muIG+3\muIG^2}{\lambdaIG\muIG^2}F\big(x;\muIG,\lambdaIG,-\tfrac{5}{2}\big)
\Big|_{x=1}^{\frac{c(t-v)}{u+cv}+1} ,&0<c\leqslant\cS,
\\[8pt]
-\frac{\lambdaIG}{\mmu^2}\exp\big\{-2\,\tfrac{\lambdaIG}{\mmu}\big\}
F\big(x;\mmu,\lambdaIG,\tfrac{1}{2}\big)\Big|_{x=1}^{\frac{c(t-v)}{u+cv}+1}
\\
-\frac{3\lambdaIG}{\mmu^2}\exp\big\{-2\,\tfrac{\lambdaIG}{\mmu}\big\}
F\big(x;\mmu,\lambdaIG,-\tfrac{1}{2}\big)\Big|_{x=1}^{\frac{c(t-v)}{u+cv}+1}
\\
-\frac{3\lambdaIG(\lambdaIG+\mmu)}{\lambdaIG\mmu^2}\exp\big\{-2\frac{\lambdaIG}{\mmu}\big\}
F\big(x;\mmu,\lambdaIG,-\tfrac{3}{2}\big)\Big|_{x=1}^{\frac{c(t-v)}{u+cv}+1}
\\
-\frac{\lambdaIG^2+3\lambdaIG\mmu+3\mmu^2}{\lambdaIG\mmu^2}\exp\big\{-2\frac{\lambdaIG}{\mmu}\big\}
F\big(x;\mmu,\lambdaIG,-\tfrac{3}{2}\big)\Big|_{x=1}^{\frac{c(t-v)}{u+cv}+1},&c\geqslant\cS,
\end{cases}
\end{equation*}
and the result follows from Theorems~\ref{sdrwhgtrjn}--\ref{wdrthytm} by means
of tedious but straightforward calculations.
\end{proof}

\begin{remark}
We have $\Int{t}{S}(u,c,v)=\underline{O}((u+cv)^{-1})$, as $u+cv\to\infty$.
\end{remark}

In Fig.~\ref{srdtrhyjr}, we draw the function $\Int{t}{F}(u,c,v)$ (thick line)
and the corresponding elementary components (see \eqref{dftyjhfgm}) for $v=0$,
$M=1$, $D^2=6$, $t=100$, and $u=15$.

\section{Proof of Theorems~\ref{srdthjrf} and~\ref{dthyjrt}}\label{weryh5ej}

We start with the proof of Theorem~\ref{srdthjrf}. This proof relies on, and is
built over the proof in \citeNP{[Malinovskii 2017]}. Because of the limited
volume, we greatly reduce exposition of those its parts which may be found in
detail\footnote{Mainly, it relates to evaluation of the residual terms.} in
\citeNP{[Malinovskii 2017]}. It mainly refers to estimation of residual terms.
We will focus on those parts that are new and which allow us to construct more
accurate approximation. As before, key formula\footnote{It is equation (1.4) in
\citeNP{[Malinovskii 2017]}.} is
\begin{equation}\label{qwretgjhmg}
\P\{v<\timeR\leqslant
t\mid\T{1}=v\}=\int_{v}^{t}\dfrac{u+cv}{u+cz}\sum_{n=1}^{\infty}
\P\big\{M(u+cz)=n\big\}f_{T}^{*n}(z-v)dz,
\end{equation}
where $M(s)=\inf\{k\geqslant 1:\sum_{i=1}^{k}Y_{i}>s\}-1$. We put $y=z-v$ in
\eqref{qwretgjhmg} and rewrite it as\footnote{It is equation (6.1) in
\citeNP{[Malinovskii 2017]}.}
\begin{equation}\label{wertyrjhn}
\begin{aligned}
\P\{v<\timeR\leqslant t\mid\T{1}=v\}&=\int_{0}^{t-v}\dfrac{u+cv}{u+cv+cy}\,\,
\p_{\big\{\sum_{i=2}^{M(u+cv+cy)+1}\T{i}\big\}}(y)dy
\\[4pt]
&=\sum_{n=1}^{\infty}\int_{0}^{t-v}\frac{u+cv}{u+cv+cy}
\int_{0}^{u+cv+cy}\P\left\{\Y{n+1}>z\right\}
\\[2pt]
&\hskip 90pt\times f_{Y}^{*n}(u+cv+cy-z)f_{T}^{*n}(y)dydz.
\end{aligned}
\end{equation}

Bearing in mind that $\T{i}$, $i=1,2,\dots$ and $\Y{i}$, $i=1,2,\dots$ are
mutually independent, the second equality in \eqref{qwretgjhmg} holds true
since
\begin{multline}\label{dtuhjtfuky}
\P\{M(u+cv+cy)=n\}=\P\bigg\{\sum_{i=1}^{n}\Y{i}\leqslant
u+cv+cy<\sum_{i=1}^{n+1}\Y{i}\bigg\}
\\[4pt]
=\int_{0}^{u+cv+cy}f^{*n}_{Y}(u+cv+cy-z)\P\{\Y{n+1}>z\}dz.
\end{multline}

The proof consists of several steps. The steps similar to Steps 1 and 3 in
\citeNP{[Malinovskii 2017]} are technical and aim elimination of the terms that
have little impact in \eqref{wertyrjhn}; it may be called preparation of
\eqref{wertyrjhn} for further analysis. It is much the same thing as in
\citeNP{[Malinovskii 2017]}, and we will not repeat the details. We merely
recall that Step 1 aims rejection of terms that correspond to small $n$, for
which the event $\{M(u+cv+cy)=n\}$ has a small probability, as $u+cv+cy$ is
large. On this step, we use bounds for probabilities of large deviations of
sums of i.i.d. random variables, like in \citeNP{[Nagaev 1965]}. Step 3 aims
processing of terms that contain $z$, i.e., defect of the random walk
$\sum_{i=1}^{n}Y_i$, $n=1,2,\dots$, as it crosses the level $u+cv+cy$ (see
\eqref{dtuhjtfuky}). This is based on application of Taylor formula, and we
discuss it below in more detail.

The step similar to Step 2 in \citeNP{[Malinovskii 2017]} consists in
application to the product $f_{Y}^{*n}(u+cv+cy-z)f_{T}^{*n}(y)$ in
\eqref{wertyrjhn} of Edgeworth expansions\footnote{In contrast to non-uniform
Berry-Esseen bounds in local CLT, as in \citeNP{[Malinovskii 2017]}.} in the
local central limit theorem (CLT) with non-uniform remainder term. It yields
main, correction, and residual terms of the approximation in a raw form. The
rest of the proof, which also consists of several steps, is
elaboration\footnote{This means simplification, or transformation, when
discarded are the terms of allowed order smallness. It will be seen below that
the main tool on this way will be a representation of the sums in the form of
integral sums and their approximation by the corresponding integrals.} of all
these terms, provided that the required accuracy is always held.

\subsection{Use of Edgeworth expansions in CLT}\label{sdfyguklyi}

For $\Y{i}\overset{d}{=}Y$ and $\T{i}\overset{d}{=}T$, let us introduce the
standardized random variables
$\tilde{Y}_i=(Y_i-\E{Y})/\sqrt{\D{Y}}\overset{d}{=}\tilde{Y}$ and
$\tilde{T}_i=(T_i-\E{T})/\sqrt{\D{T}}\overset{d}{=}\tilde{T}$. It is noteworthy
that, e.g., $\E\tilde{Y}^3=\E(Y-\E{Y})^3/(\D{Y})^{3/2}$ and
$\E\tilde{T}^3=\E(T-\E{T})^3/(\D{T})^{3/2}$. For i.i.d. random vectors
$\xi_i=(\tilde{Y}_i,\tilde{T}_i)\in\R^{2}$, we bear in mind that
\begin{equation*}
f_{T}^{*n}(x)=\frac{1}{\sqrt{n\D{T}}}\,p_{n^{-1/2}\sum_{i=1}^{n}\tilde{T}_i}
\left(\tfrac{x-n\E{T}}{\sqrt{n\D{T}}}\right),\quad
f_{Y}^{*n}(x)=\frac{1}{\sqrt{n\D{Y}}}\,p_{n^{-1/2}\sum_{i=1}^{n}\tilde{Y}_i}
\left(\tfrac{x-n\E{Y}}{\sqrt{n\D{Y}}}\right),
\end{equation*}
and take advantage of Theorem~\ref{w4e5y46yu43}. In this way, we have
\begin{equation}\label{werkyhjrtjr}
\big|\,\P\{v<\timeR\leqslant
t\mid\T{1}=v\}-\expaN{t}(u,c,v)\big|\leqslant\MainRemTerm{t}(u,c,v),
\end{equation}
where\footnote{Quite the same as in Step 1 in \citeNP{[Malinovskii 2017]}, we
have reduced first the area of summation, rejecting terms that correspond to
$n\leqslant\EnOne=\epsilon(u+cv)$, where $0<\epsilon<1$. In the use of
estimates like in \citeNP{[Malinovskii 2017]}, we bear in mind that
$\E{T}^4<\infty$, $\E{Y}^4<\infty$.} $c,u>0$, $0<v<t$,
\begin{equation}\label{wert45hjrt6jn}
\expaN{t}(u,c,v)=\expaN{t}^{[1]}(u,c,v)+\expaN{t}^{[2]}(u,c,v)
+\expaN{t}^{[3]}(u,c,v)
\end{equation}
with
\begin{equation*}
\begin{aligned}
\expaN{t}^{[1]}(u,c,v)&=\frac{1}{\sqrt{\D{Y}\D{T}}}\,
\sum_{n=\EnOne}^{\infty}n^{-1}\int_{0}^{t-v}\frac{u+cv}{u+cv+cy}
\int_{0}^{u+cv+cy}\P\left\{\Y{n+1}>z\right\}
\\
&\hskip
0pt\times\Ugauss{0}{1}\bigg(\frac{u+cv+cy-z-n\E{Y}}{\sqrt{n\D{Y}}}\bigg)
\Ugauss{0}{1}\bigg(\frac{y-n\E{T}}{\sqrt{n\D{T}}}\bigg)\,dydz,
\\[4pt]
\expaN{t}^{[2]}(u,c,v)&=\frac{\E\tilde{T}^3}{6\sqrt{\D{Y}\D{T}}}
\sum_{n=\EnOne}^{\infty}n^{-3/2}\int_{0}^{t-v}\frac{u+cv}{u+cv+cy}
\int_{0}^{u+cv+cy}\P\left\{\Y{n+1}>z\right\}
\\
&\hskip 0pt\times\bigg(\bigg(\dfrac{y-n\E{T}}{\sqrt{n\D{T}}}\bigg)^3
-3\bigg(\dfrac{y-n\E{T}}{\sqrt{n\D{T}}}\bigg)\bigg)
\\
&\times\Ugauss{0}{1}\bigg(\frac{u+cv+cy-z-n\E{Y}}{\sqrt{n\D{Y}}}\bigg)\Ugauss{0}{1}\bigg(\dfrac{y-n\E{T}}
{\sqrt{n\D{T}}}\bigg)\,dydz,
\\[4pt]
\expaN{t}^{[3]}(u,c,v)&=\frac{\E{\tilde{Y}^3}}{6\sqrt{\D{Y}\D{T}}}
\sum_{n=\EnOne}^{\infty}n^{-3/2}\int_{0}^{t-v}\frac{u+cv}{u+cv+cy}
\int_{0}^{u+cv+cy}\P\left\{\Y{n+1}>z\right\}
\\
&\hskip 0pt\times\bigg(\bigg(\dfrac{u+cv+cy-z-n\E{Y}}{\sqrt{n\D{Y}}}\bigg)^3-3
\bigg(\dfrac{u+cv+cy-z-n\E{Y}}{\sqrt{n\D{Y}}}\bigg)\bigg)
\\
&\times\Ugauss{0}{1}\bigg(\dfrac{u+cv+cy-z-n\E{Y}}{\sqrt{n\D{Y}}}\bigg)\Ugauss{0}{1}\bigg(\dfrac{y-n\E{T}}
{\sqrt{n\D{T}}}\bigg)\,dydz.
\end{aligned}
\end{equation*}
and
\begin{multline*}
\MainRemTerm{t}(u,c,v)=K\sum_{n=\EnOne}^{\infty}n^{-2}\int_{0}^{t-v}\frac{u+cv}{u+cv+cy}
\int_{0}^{u+cv+cy}\P\left\{\Y{n+1}>z\right\}
\\
\times\bigg(1+\bigg[\bigg(\frac{u+cv+cy-z-n\E{Y}}{\sqrt{n\D{Y}}}\bigg)^2
+\bigg(\frac{y-n\E{T}}{\sqrt{n\D{T}}}\bigg)^2\bigg]^{1/2}\bigg)^{-4}\,dydz.
\end{multline*}


\begin{remark}[Use of two-di\-men\-si\-onal local CLT]\label{wqeyhjr}
Deriving \eqref{werkyhjrtjr}, we applied Theorem~\ref{w4e5y46yu43} to the
product $f_{Y}^{*n}(u+cv+cy-z)f_{T}^{*n}(y)$. It is Edgeworth expansions in
two-di\-men\-si\-onal local CLT. Alternatively, we could consider
$f_{Y}^{*n}(u+cv+cy-z)$ and $f_{T}^{*n}(y)$ one-by-one, separately, by applying
Edgeworth expansions in one-di\-men\-si\-onal local CLT to each of these
factors. We preferred to use Theorem~\ref{w4e5y46yu43} to get the remainder
term $\MainRemTerm{t}(u,c,v)$ in a form better suited for further analysis.
\end{remark}

\subsection{Reducing of approximation \eqref{wert45hjrt6jn} to a convenient form}\label{wertwserewye}

On this step, we proceed in the same way as in \citeNP{[Malinovskii 2017]}.
First, we make a suitable change of variables. We put $x=c\,y/(u+cv)$,
$dx=c\,dy/(u+cv)$, and
\begin{equation*}
\mathcal{Y}_{n,z}(u+cv,x)=\dfrac{(u+cv)(1+x)-z-n\E{Y}}{\sqrt{n\D{Y}}},\hskip
6pt\mathcal{T}_n(u+cv,x)=\dfrac{(u+cv)x/c-n\E{T}}{\sqrt{n\D{T}}}.
\end{equation*}
We represent the summands in \eqref{wert45hjrt6jn} as follows\footnote{We bear
in mind that $Y_{n+1}\overset{d}{=}Y$ and that $cy=(u+cv)x$, $cdy=(u+cv)dx$.}:
\begin{equation*}
\begin{aligned}
\expaN{t}^{[1]}(u,c,v)&=\frac{u+cv}{c\sqrt{\D{Y}\D{T}}}
\,\int_{0}^{\frac{c(t-v)}{u+cv}}
\frac{1}{1+x}\int_{0}^{(u+cv)(1+x)}\P\left\{Y>z\right\}
\\
&\hskip
0pt\times\,\sum_{n=\EnOne}^{\infty}n^{-1}\Ugauss{0}{1}\big(\mathcal{Y}_{n,z}(u+cv,x)\big)
\Ugauss{0}{1}\big(\mathcal{T}_n(u+cv,x)\big)\,dxdz,
\\
\expaN{t}^{[2]}(u,c,v)&=\frac{(u+cv)\E(\tilde{T}^3)}{6c\sqrt{\D{Y}\D{T}}}
\int_{0}^{\frac{c(t-v)}{u+cv}}
\frac{1}{1+x}\int_{0}^{(u+cv)(1+x)}\P\left\{Y>z\right\}
\\[4pt]
&\hskip
0pt\times\sum_{n=\EnOne}^{\infty}n^{-3/2}\big(\mathcal{T}^3_n(u+cv,x)-3\mathcal{T}_n(u+cv,x)\big)
\\[4pt]
&\times\Ugauss{0}{1}\big(\mathcal{Y}_{n,z}(u+cv,x)\big)\Ugauss{0}{1}
\big(\mathcal{T}_n(u+cv,x)\big)\,dxdz,
\end{aligned}
\end{equation*}
\begin{equation*}
\begin{aligned}
\expaN{t}^{[3]}(u,c,v)&=\frac{(u+cv)\E(\tilde{Y}^3)}{6c\sqrt{\D{Y}\D{T}}}
\int_{0}^{\frac{c(t-v)}{u+cv}}
\frac{1}{1+x}\int_{0}^{(u+cv)(1+x)}\P\left\{Y>z\right\}
\\[4pt]
&\hskip 0pt\times
\sum_{n=\EnOne}^{\infty}n^{-3/2}\big(\mathcal{Y}^3_{n,z}(u+cv,x)-3\mathcal{Y}_{n,z}(u+cv,x)\big)
\\[4pt]
&\times\Ugauss{0}{1}\big(\mathcal{Y}_{n,z}(u+cv,x)\big)
\Ugauss{0}{1}\big(\mathcal{T}_n(u+cv,x)\big)\,dxdz.
\end{aligned}
\end{equation*}
In the same way, we write
\begin{equation*}
\begin{aligned}
\MainRemTerm{t}(u,c,v)&=K(u+cv)\int_{0}^{\frac{c(t-v)}{u+cv}}
\frac{1}{1+x}\int_{0}^{(u+cv)(1+x)}\P\left\{Y>z\right\}
\\
&\times\sum_{n=\EnOne}^{\infty}n^{-2}\big(1+\big[\big(\mathcal{Y}_{n,z}(u+cv,x)\big)^2
+\big(\mathcal{T}_n(u+cv,x)\big)^2\big]^{1/2}\big)^{-4}\,dxdz.
\end{aligned}
\end{equation*}

Second, we develop and use the extensions of fundamental identities of
Section~\ref{werwhteyjn}. We set
\begin{equation}\label{wergeghwerh}
\begin{aligned}
\Delta_{n,z}(u+cv,x)&=\dfrac{(u+cv)(x/c)\E{Y}-[(u+cv)(1+x)-z]\E{T}}{\sqrt{B_1n}},
\\
\Lambda_{n,z}(u+cv,x)
&=\dfrac{B_1n-\big(B_2[(u+cv)(1+x)-z]+B_3(u+cv)x/c\big)}{\sqrt{B_1B_4n}},
\end{aligned}
\end{equation}
where $B_1=(\E{T})^2\D{Y}+(\E{Y})^2\D{T}$, $B_2=\E{Y}\D{T}$, $B_3=\E{T}\D{Y}$,
and $B_4=\D{Y}\D{T}$, and apply the following identities which proof is
straightforward.

\begin{lemma}\label{ewrgfwdgsgw}
The following identities hold true:
\begin{equation*}
\begin{aligned}
\mathcal{Y}_{n,z}(u+cv,x)
&=-\frac{\E{Y}}{\sqrt{\D{Y}B_1}}\Big(\sqrt{B_4}\Lambda_{n,z}(u+cv,x)+\frac{B_3}{\E{Y}}\Delta_{n,z}(u+cv,x)\Big),
\\
\mathcal{T}_{n,z}(u+cv,x)&=\frac{\E{T}}{\sqrt{\D{T}B_1}}
\Big(\frac{B_2}{\E{T}}\Delta_{n,z}(u+cv,x)-\sqrt{B_4}\Lambda_{n,z}(u+cv,x)\Big).
\end{aligned}
\end{equation*}
\end{lemma}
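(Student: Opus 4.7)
The plan is to verify both identities by direct algebraic substitution. Since $\Delta_{n,z}$ and $\Lambda_{n,z}$ are both affine in the unstandardized quantities $A := (u+cv)(1+x)-z$ and $C := (u+cv)x/c$ that appear in the numerators of $\mathcal{Y}_{n,z}$ and $\mathcal{T}_n$, it suffices to show that the right-hand sides of the two claimed identities reduce to the right affine combinations of $A$, $C$, and $n$. Since $\mathcal{Y}_{n,z}$ depends only on $A$ (and $n$) while $\mathcal{T}_n$ depends only on $C$ (and $n$), the actual content of the lemma is that the cross-terms in $C$ and $A$ respectively cancel.

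The key algebraic fact driving these cancellations is the relation $B_1 = \E{Y}\cdot B_2 + \E{T}\cdot B_3$, which follows at once from the definitions $B_1=(\E{T})^2\D{Y}+(\E{Y})^2\D{T}$, $B_2=\E{Y}\D{T}$, $B_3=\E{T}\D{Y}$. It is useful in the two rearranged forms $B_2+(\E{T}/\E{Y})B_3 = B_1/\E{Y}$ and $(\E{Y}/\E{T})B_2 + B_3 = B_1/\E{T}$. Having recorded this, I would substitute the definitions of $\Delta_{n,z}$ and $\Lambda_{n,z}$ into the bracketed expression of the first identity and place everything over the common denominator $\sqrt{B_1 n}$; the $B_3 C$ contributions then cancel between $\sqrt{B_4}\,\Lambda_{n,z}$ and $(B_3/\E{Y})\Delta_{n,z}$, leaving $B_1 n - A\bigl(B_2 + B_3\E{T}/\E{Y}\bigr)$ in the numerator. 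Applying the first rearranged form collapses this to $(B_1/\E{Y})(n\E{Y} - A)$, and multiplying by the prefactor $-\E{Y}/\sqrt{\D{Y}B_1}$ yields $(A-n\E{Y})/\sqrt{n\D{Y}} = \mathcal{Y}_{n,z}$, as required.

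The second identity is handled symmetrically: the $B_2 A$ contributions cancel between $(B_2/\E{T})\Delta_{n,z}$ and $-\sqrt{B_4}\,\Lambda_{n,z}$, the second rearranged form simplifies the coefficient of $C$ to $B_1/\E{T}$, and the prefactor $\E{T}/\sqrt{\D{T}B_1}$ reproduces $(C-n\E{T})/\sqrt{n\D{T}} = \mathcal{T}_n$. There is no conceptual obstacle in this argument; the only hazard is a sign or bookkeeping slip. To minimize the risk, I would keep everything over the common denominator $\sqrt{B_1 n}$ from the outset and carry the factors $\sqrt{B_4}=\sqrt{\D{Y}\D{T}}$ explicitly until the very last step, rather than absorbing them early.
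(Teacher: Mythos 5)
Your proposal is correct: the direct substitution works, the cancellations of the $B_3 C$ and $B_2 A$ terms occur exactly as you describe, and the key relation $B_1=\E{Y}B_2+\E{T}B_3$ collapses the remaining coefficients so that the prefactors $-\E{Y}/\sqrt{\D{Y}B_1}$ and $\E{T}/\sqrt{\D{T}B_1}$ recover $\mathcal{Y}_{n,z}$ and $\mathcal{T}_n$. This is essentially the same route as the paper, which simply declares the identities' proof straightforward (i.e., direct algebraic verification).
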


The following Lemmas \ref{rtyujrtkj}, \ref{ewrtyjhrmt} are straightforward from
Lemma \ref{ewrgfwdgsgw}.

\begin{lemma}\label{rtyujrtkj}
The following identities hold true:
\begin{equation*}
\mathcal{Y}^2_{n,z}(u+cv,x)+\mathcal{T}^2_n(u+cv,x)=\Lambda_{n,z}^2(u+cv,x)+\Delta^2_{n,z}(u+cv,x),
\end{equation*}
and
\begin{multline*}
\Ugauss{0}{1}\big(\mathcal{Y}_{n,z}(u+cv,x)\big)\Ugauss{0}{1}
\big(\mathcal{T}_n(u+cv,x)\big)
\\
=\frac{1}{2\pi}
\exp\big\{-\tfrac{1}{2}\big[\Lambda_{n,z}^2(u+cv,x)+\Delta^2_{n,z}(u+cv,x)\big]\big\}.
\end{multline*}
\end{lemma}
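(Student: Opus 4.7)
The plan is to verify both identities by direct substitution of the linear expressions provided by Lemma~\ref{ewrgfwdgsgw}. Conceptually, the claim is that the $2\times 2$ linear map sending $(\Lambda_{n,z},\Delta_{n,z})$ to $(\mathcal{Y}_{n,z},\mathcal{T}_n)$ is \emph{orthogonal} (up to sign) in the Euclidean sense, so its action preserves the sum of squares. Equivalently, in matrix form we have
\begin{equation*}
\begin{pmatrix}\mathcal{Y}_{n,z}\\ \mathcal{T}_n\end{pmatrix}
= \frac{1}{\sqrt{B_1}}\begin{pmatrix}-\E{Y}\sqrt{B_4}/\sqrt{\D{Y}} & -B_3/\sqrt{\D{Y}}\\ -\E{T}\sqrt{B_4}/\sqrt{\D{T}} & B_2/\sqrt{\D{T}}\end{pmatrix}\begin{pmatrix}\Lambda_{n,z}\\ \Delta_{n,z}\end{pmatrix},
\end{equation*}
and the plan is to show that this matrix has orthonormal rows, from which the first identity follows and then the second is immediate.

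First, I would substitute the formulas of Lemma~\ref{ewrgfwdgsgw} into $\mathcal{Y}_{n,z}^2+\mathcal{T}_n^2$ and expand both squares, grouping terms according to the monomials $\Lambda^2$, $\Delta^2$, and $\Lambda\Delta$. Using the definitions $B_1=(\E{T})^2\D{Y}+(\E{Y})^2\D{T}$, $B_2=\E{Y}\D{T}$, $B_3=\E{T}\D{Y}$, $B_4=\D{Y}\D{T}$, the coefficient of $\Lambda^2$ collapses to $\bigl((\E{Y})^2\D{T}+(\E{T})^2\D{Y}\bigr)/B_1 = 1$, and similarly the coefficient of $\Delta^2$ reduces to $\bigl((\E{T})^2\D{Y}+(\E{Y})^2\D{T}\bigr)/B_1 = 1$ after using $B_2^2/\D{T}=(\E{Y})^2\D{T}$ and $B_3^2/\D{Y}=(\E{T})^2\D{Y}$.

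Second, I would check that the cross term $\Lambda\Delta$ cancels. The two contributions are $+2\sqrt{B_4}\,\E{Y}B_3/(\D{Y}B_1)$ from $\mathcal{Y}_{n,z}^2$ and $-2\sqrt{B_4}\,\E{T}B_2/(\D{T}B_1)$ from $\mathcal{T}_n^2$; substituting $B_3=\E{T}\D{Y}$ and $B_2=\E{Y}\D{T}$, both become $\pm 2\sqrt{B_4}\,\E{T}\E{Y}/B_1$, so they cancel. This completes the proof of the first identity.

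Finally, the second identity is a one-line consequence of the first: since $\Ugauss{0}{1}(x)=(2\pi)^{-1/2}\exp(-x^2/2)$, one has
\begin{equation*}
\Ugauss{0}{1}\bigl(\mathcal{Y}_{n,z}(u+cv,x)\bigr)\,\Ugauss{0}{1}\bigl(\mathcal{T}_n(u+cv,x)\bigr)
=\frac{1}{2\pi}\exp\Bigl\{-\tfrac{1}{2}\bigl[\mathcal{Y}_{n,z}^2(u+cv,x)+\mathcal{T}_n^2(u+cv,x)\bigr]\Bigr\},
\end{equation*}
and the bracket is rewritten via the first identity. There is no real obstacle here; the only thing to be careful about is tracking the signs and the factors of $\sqrt{B_4}$, $\sqrt{\D{Y}}$, $\sqrt{\D{T}}$ so that the identifications $B_2^2/\D{T}=(\E{Y})^2\D{T}$ and $B_3^2/\D{Y}=(\E{T})^2\D{Y}$ are used at the right moment.
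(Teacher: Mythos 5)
Your proof is correct and matches the paper's route: the paper states that Lemma~\ref{rtyujrtkj} is straightforward from the linear representations in Lemma~\ref{ewrgfwdgsgw} (cf.\ also the fundamental identity of Lemma~\ref{erwtjytk}, verified by a direct check), which is exactly the substitution-and-expansion you carry out, with the second identity then following from $\Ugauss{0}{1}(x)=(2\pi)^{-1/2}e^{-x^2/2}$. Your observation that the transformation is orthogonal is a pleasant way to package the same computation, but it is not a genuinely different argument.
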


\begin{lemma}\label{ewrtyjhrmt}
The following identities hold true:
\begin{multline*}
\mathcal{Y}^3_{n,z}(u+cv,x)-3\mathcal{Y}_{n,z}(u+cv,x)
=-\bigg(\frac{\E{Y}}{\sqrt{\D{Y}B_1}}\bigg)^3\bigg(\frac{B^3_3}{(\E{Y})^3}\Delta_{n,z}^3(u+cv,x)
\\
+3\frac{B^2_3\sqrt{B_4}}{(\E{Y})^2}\Delta_{n,z}^2(u+cv,x)\Lambda_{n,z}(u+cv,x)
+3\frac{B_3B_4}{\E{Y}}\Delta_{n,z}(u+cv,x)\Lambda^2_{n,z}(u+cv,x)
\\
+B_4^{3/2}\Lambda^3_{n,z}(u+cv,x)\bigg)
+3\frac{\E{Y}}{\sqrt{\D{Y}B_1}}\Big(\sqrt{B_4}\Lambda_{n,z}(u+cv,x)
+\frac{B_3}{\E{Y}}\Delta_{n,z}(u+cv,x)\Big),
\end{multline*}
and
\begin{multline*}
\mathcal{T}^3_{n,z}(u+cv,x)-3\mathcal{T}_{n,z}(u+cv,x)
=\bigg(\frac{\E{T}}{\sqrt{\D{T}B_1}}\bigg)^3
\Big(\frac{B_2^3}{(\E{T})^3}\Delta^3_{n,z}(u+cv,x)
\\
-3\frac{B_2^2\sqrt{B_4}}{(\E{T})^2}\Delta^2_{n,z}(u+cv,x)\Lambda_{n,z}(u+cv,x)
+3\frac{B_2B_4}{\E{T}}\Delta_{n,z}(u+cv,x)\Lambda^2_{n}(u+cv,x)
\\
-B_4^{3/2}\Lambda^3_{n,z}(u+cv,x)\Big)-3\frac{\E{T}}{\sqrt{\D{T}B_1}}
\Big(\frac{B_2}{\E{T}}\Delta_{n,z}(u+cv,x)-\sqrt{B_4}\Lambda_{n,z}(u+cv,x)\Big).
\end{multline*}
\end{lemma}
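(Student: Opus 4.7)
The plan is to obtain the two identities by direct algebraic expansion starting from the linear substitutions given in Lemma~\ref{ewrgfwdgsgw}. Since both claims have exactly the structure $\alpha^3 - 3\alpha$ where $\alpha$ has already been written as a linear combination of $\Delta_{n,z}(u+cv,x)$ and $\Lambda_{n,z}(u+cv,x)$, there is no probabilistic input needed beyond the identities already established; only careful bookkeeping of binomial coefficients and signs.

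First I would abbreviate $\Delta=\Delta_{n,z}(u+cv,x)$ and $\Lambda=\Lambda_{n,z}(u+cv,x)$ to keep formulas legible. Writing the first identity of Lemma~\ref{ewrgfwdgsgw} as
\[
\mathcal{Y}_{n,z}(u+cv,x)=-A_Y\Big(\sqrt{B_4}\,\Lambda+\tfrac{B_3}{\E{Y}}\,\Delta\Big),\qquad A_Y=\frac{\E{Y}}{\sqrt{\D{Y}B_1}},
\]
I would cube both sides, using $(-1)^3=-1$ and the binomial expansion $(a+b)^3=a^3+3a^2b+3ab^2+b^3$ applied to $a=\sqrt{B_4}\Lambda$, $b=\tfrac{B_3}{\E{Y}}\Delta$. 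This produces exactly the four terms $B_4^{3/2}\Lambda^3$, $3\tfrac{B_3B_4}{\E{Y}}\Delta\Lambda^2$, $3\tfrac{B_3^2\sqrt{B_4}}{(\E{Y})^2}\Delta^2\Lambda$, $\tfrac{B_3^3}{(\E{Y})^3}\Delta^3$ appearing inside the bracket of the stated formula, with the overall minus sign and the overall coefficient $A_Y^3=(\E{Y}/\sqrt{\D{Y}B_1})^3$. Then the $-3\mathcal{Y}_{n,z}$ contribution is obtained by multiplying the original identity by $-3$, which flips the outer minus into a plus and yields the last line of the claim.

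The second identity is carried out identically, but now starting from
\[
\mathcal{T}_{n,z}(u+cv,x)=A_T\Big(\tfrac{B_2}{\E{T}}\,\Delta-\sqrt{B_4}\,\Lambda\Big),\qquad A_T=\frac{\E{T}}{\sqrt{\D{T}B_1}},
\]
so the outer sign is $+$ rather than $-$, and the binomial expansion is applied to $a=\tfrac{B_2}{\E{T}}\Delta$, $b=-\sqrt{B_4}\Lambda$. The alternating signs in the claimed formula (pattern $+,-,+,-$ on the four cubic monomials) arise precisely from the odd powers of $-\sqrt{B_4}\Lambda$. Subtracting $3\mathcal{T}_{n,z}$ adds the final linear correction with the opposite sign, matching the last line of the second identity.

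The only real obstacle is clerical: keeping track of the interplay between the outer sign, the binomial signs, and the reorganization of the factors of $B_1,B_2,B_3,B_4,\E{Y},\E{T},\D{Y},\D{T}$ so that the final groupings agree with the presentation in the statement. No new estimates or limiting arguments are required; once the two linear substitutions are cubed termwise and combined with the $-3\alpha$ correction, the identities fall out.
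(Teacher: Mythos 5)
Your proposal is correct and matches the paper's route exactly: the paper simply declares Lemma~\ref{ewrtyjhrmt} ``straightforward from Lemma~\ref{ewrgfwdgsgw}'', i.e.\ cube the two linear representations of $\mathcal{Y}_{n,z}$ and $\mathcal{T}_{n,z}$ in $\Delta_{n,z}$, $\Lambda_{n,z}$ and add the $-3\mathcal{Y}_{n,z}$ (resp.\ $-3\mathcal{T}_{n,z}$) term, which is precisely your binomial-expansion bookkeeping.
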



Thus, using identities of Lemmas~\ref{ewrgfwdgsgw}, \ref{rtyujrtkj} and of
Lemma~\ref{ewrtyjhrmt}, along with fundamental identities of
Section~\ref{werwhteyjn}, we write
\begin{multline*}
\expaN{t}^{[1]}(u,c,v)=\frac{u+cv}{2\pi
c\sqrt{\D{T}\D{Y}}}\int_{0}^{\frac{c(t-v)}{u+cv}}
\frac{1}{1+x}\int_{0}^{(u+cv)(1+x)}\P\left\{Y>z\right\}\sum_{n=\EnOne}^{\infty}n^{-1}
\\
\times\exp\big\{-\tfrac{1}{2}\big[\Lambda_{n,z}^2(u+cv,x)+\Delta^2_{n,z}(u+cv,x)\big]\big\}dxdz,
\end{multline*}
\begin{multline*}
\expaN{t}^{[2]}(u,c,v)=\frac{(u+cv)\E(\tilde{T}^3)}{12\pi c\sqrt{\D{Y}\D{T}}}
\int_{0}^{\frac{c(t-v)}{u+cv}}
\frac{1}{1+x}\int_{0}^{(u+cv)(1+x)}\P\left\{Y>z\right\}\sum_{n=\EnOne}^{\infty}n^{-3/2}
\\[0pt]
\times\bigg\{\bigg(\frac{\E{T}}{\sqrt{\D{T}B_1}}\bigg)^3
\bigg(\frac{B_2^3}{(\E{T})^3}\Delta^3_{n,z}(u+cv,x)
-3\frac{B_2^2\sqrt{B_4}}{(\E{T})^2}\Delta^2_{n,z}(u+cv,x)\Lambda_{n,z}(u+cv,x)
\\
+3\frac{B_2B_4}{\E{T}}\Delta_{n,z}(u+cv,x)\Lambda^2_{n}(u+cv,x)
-B_4^{3/2}\Lambda^3_{n,z}(u+cv,x)\bigg)
\\
-3\frac{\E{T}}{\sqrt{\D{T}B_1}}
\bigg(\frac{B_2}{\E{T}}\Delta_{n,z}(u+cv,x)-\sqrt{B_4}\Lambda_{n,z}(u+cv,x)\bigg)\bigg\}
\\[4pt]
\times\exp\big\{-\tfrac{1}{2}\big[\Lambda_{n,z}^2(u+cv,x)+\Delta^2_{n,z}(u+cv,x)\big]\big\}dxdz,
\end{multline*}
\begin{multline*}
\expaN{t}^{[3]}(u,c,v)=-\frac{(u+cv)\E({\tilde{Y}^3})}{12\pi
c\sqrt{\D{Y}\D{T}}} \int_{0}^{\frac{c(t-v)}{u+cv}}
\frac{1}{1+x}\int_{0}^{(u+cv)(1+x)}\P\left\{Y>z\right\}\sum_{n=\EnOne}^{\infty}n^{-3/2}
\\[4pt]
\times\bigg\{\bigg(\frac{\E{Y}}{\sqrt{\D{Y}B_1}}\bigg)^3\Big(\frac{B^3_3}{(\E{Y})^3}\Delta_{n,z}^3(u+cv,x)
+3\frac{B^2_3\sqrt{B_4}}{(\E{Y})^2}\Delta_{n,z}^2(u+cv,x)\Lambda_{n,z}(u+cv,x)
\\
+3\frac{B_3B_4}{\E{Y}}\Delta_{n,z}(u+cv,x)\Lambda^2_{n,z}(u+cv,x)
+B_4^{3/2}\Lambda^3_{n,z}(u+cv,x)\Big)
\\
-3\frac{\E{Y}}{\sqrt{\D{Y}B_1}}\Big(\sqrt{B_4}\Lambda_{n,z}(u+cv,x)
+\frac{B_3}{\E{Y}}\Delta_{n,z}(u+cv,x)\Big)\bigg\}
\\[4pt]
\times\exp\big\{-\tfrac{1}{2}\big[\Lambda_{n,z}^2(u+cv,x)+\Delta^2_{n,z}(u+cv,x)\big]\big\}dxdz,
\end{multline*}
and
\begin{multline*}
\MainRemTerm{t}(u,c,v)=K(u+cv)\int_{0}^{\frac{c(t-v)}{u+cv}}\frac{1}{1+x}
\int_{0}^{(u+cv)(1+x)}\P\left\{Y>z\right\}\,
\\
\times\sum_{n=\EnOne}^{\infty}n^{-2}\big(1+\big[\Lambda_{n,z}^2(u+cv,x)+\Delta^2_{n,z}(u+cv,x)\big]^{1/2}\big)^{-4}dxdz.
\end{multline*}

\subsection{Processing of terms that contain $z$ in $\expaN{t}^{[1]}(u,c,v)$--$\expaN{t}^{[1]}(u,c,v)$ and in $\MainRemTerm{t}(u,c,v)$}\label{rytjukty}

The same way as in \citeNP{[Malinovskii 2017]}, we will discard the terms
containing $z$, i.e., defect of the random walk $\sum_{i=1}^{n}Y_i$,
$n=1,2,\dots$, holding the allowed accuracy of approximation. We rewrite
\eqref{wergeghwerh} as
\begin{equation*}
\Delta_{n,z}(u+cv,x)=\Delta_{n}(u+cv,x) +\frac{z\E{T}}{\sqrt{B_1n}}, \quad
\Lambda_{n,z}(u+cv,x)=\Lambda_{n}(u+cv,x) +\frac{zB_2}{\sqrt{B_1B_4n}},
\end{equation*}
where
\begin{equation*}
\begin{aligned}
\Delta_{n}(u+cv,x)&=(u+cv)\dfrac{(x/c)\E{Y}-(1+x)\E{T}}{\sqrt{B_1n}},
\\
\Lambda_{n}(u+cv,x)
&=\dfrac{B_1n-(B_2(u+cv)(1+x)+B_3(u+cv)(x/c))}{\sqrt{B_1B_4n}}.
\end{aligned}
\end{equation*}

\begin{lemma}[Taylor's formula for exponential term]\label{dfgfmgmh}
We have\footnote{Here and in some subsequent stages of the proof certain
cumbersome but evident formulas are skipped and replaced by ellipsis. This
refers mainly to remainder terms as here, and is done because of the volume
restrictions. The reader will easily restore the skipped formulas.}
\begin{multline*}
\exp\big\{-\tfrac{1}{2}\big[\Lambda_{n,z}^2(u+cv,x)+\Delta^2_{n,z}(u+cv,x)\big]\big\}
=\exp\big\{-\tfrac{1}{2}\big[\Lambda_{n}^2(u+cv,x)+\Delta^2_{n}(u+cv,x)\big]\big\}
\\[4pt]
\times\big(1+z(\D{Y}B_1n)^{-1/2}\big(\E{T}{\sqrt{\D{Y}}}\Delta_{n}(u+cv,x)
+\E{Y}\sqrt{\D{T}}\Lambda_{n}(u+cv,x)\big)+\dots\big).
\end{multline*}
\end{lemma}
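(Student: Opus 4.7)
The plan is to perform a Taylor expansion in $z$ of the exponent $-\tfrac{1}{2}[\Lambda_{n,z}^{2}(u+cv,x)+\Delta_{n,z}^{2}(u+cv,x)]$ around $z=0$, and then apply $e^{w}=1+w+\tfrac{1}{2}w^{2}+\cdots$ to the resulting perturbation. First I would substitute the shift identities stated just above the lemma,
\begin{equation*}
\Delta_{n,z}(u+cv,x)=\Delta_{n}(u+cv,x)+\frac{z\,\E{T}}{\sqrt{B_{1}n}},\qquad
\Lambda_{n,z}(u+cv,x)=\Lambda_{n}(u+cv,x)+\frac{z\,B_{2}}{\sqrt{B_{1}B_{4}n}},
\end{equation*}
and expand the squares, collecting by powers of $z$:
\begin{equation*}
\Lambda_{n,z}^{2}+\Delta_{n,z}^{2}=\Lambda_{n}^{2}+\Delta_{n}^{2}+2z\,\xi_{n}(u+cv,x)+z^{2}\eta_{n},
\end{equation*}
where $\xi_{n}(u+cv,x)=B_{2}\Lambda_{n}/\sqrt{B_{1}B_{4}n}+\E{T}\,\Delta_{n}/\sqrt{B_{1}n}$ and $\eta_{n}=B_{2}^{2}/(B_{1}B_{4}n)+(\E{T})^{2}/(B_{1}n)$ does not depend on $x$.

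Using $B_{2}=\E{Y}\D{T}$ and $B_{4}=\D{Y}\D{T}$, the linear coefficient rearranges into
\begin{equation*}
\xi_{n}(u+cv,x)=(\D{Y}\,B_{1}n)^{-1/2}\bigl(\E{T}\sqrt{\D{Y}}\,\Delta_{n}(u+cv,x)+\E{Y}\sqrt{\D{T}}\,\Lambda_{n}(u+cv,x)\bigr),
\end{equation*}
which is (up to sign) the bracket displayed in the lemma. I would then factor the exponential as
\begin{equation*}
\exp\bigl\{-\tfrac{1}{2}[\Lambda_{n,z}^{2}+\Delta_{n,z}^{2}]\bigr\}=\exp\bigl\{-\tfrac{1}{2}[\Lambda_{n}^{2}+\Delta_{n}^{2}]\bigr\}\,\exp\bigl\{-z\,\xi_{n}(u+cv,x)-\tfrac{1}{2}z^{2}\eta_{n}\bigr\},
\end{equation*}
and apply $e^{w}=1+w+\tfrac{1}{2}w^{2}e^{\theta w}$ (Lagrange form, $\theta\in[0,1]$) to the second factor. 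Keeping the $O(z)$ contribution produces the identity claimed, with the ellipsis absorbing $-\tfrac{1}{2}z^{2}\eta_{n}$ together with the Taylor remainder of order $z^{2}$ and higher.

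The main obstacle is not the algebra, which is elementary, but the subsequent bookkeeping: one must verify that the terms hidden in the ellipsis can be absorbed into $\MainRemTerm{t}(u,c,v)$ after integration in $z$ against $\P\{Y>z\}$ and summation over $n\geqslant\EnOne$. Higher Taylor coefficients introduce positive polynomial powers of $\Delta_{n}(u+cv,x)$ and $\Lambda_{n}(u+cv,x)$ that are tamed by the Gaussian factor $\exp\{-\tfrac{1}{2}[\Lambda_{n}^{2}+\Delta_{n}^{2}]\}$, while the moment assumptions $\E(T^{4})<\infty$ and $\E(Y^{4})<\infty$ provide enough decay of $\P\{Y>z\}$ to integrate the accompanying powers of $z$. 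This is the same kind of non-uniform estimation carried out in the corresponding step of Malinovskii (2017), to which the authors defer for the full quantitative control.
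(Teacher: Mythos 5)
Your proposal is correct and takes essentially the same route as the paper, which likewise Taylor-expands $f(z)=\exp\{-\tfrac{1}{2}[\Lambda_{n,z}^2(u+cv,x)+\Delta^2_{n,z}(u+cv,x)]\}$ in $z$ about $z=0$ using the shift identities for $\Delta_{n,z}$ and $\Lambda_{n,z}$; your exact factorization of the exponential followed by expanding $e^{w}$ is just a more explicit way of writing $f(z)=f(0)+zf^{\prime}(0)+\dots$, and the remainder bookkeeping is deferred to the non-uniform estimates of Malinovskii (2017) exactly as you indicate. Your parenthetical sign remark is well taken: differentiating the exponent gives $f^{\prime}(z)=-f(z)(\D{Y}B_1n)^{-1/2}\big(\E{T}\sqrt{\D{Y}}\,\Delta_{n,z}+\E{Y}\sqrt{\D{T}}\,\Lambda_{n,z}\big)$, so the plus sign in the lemma (and in the paper's displayed $f^{\prime}$) appears to be a slip rather than a flaw in your argument.
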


\begin{proof}
For the proof, we apply Taylor's theorem $f(z)=f(0)+zf^{\prime}(0)+\dots$ to
the function
$f(z)=\exp\big\{-\tfrac{1}{2}\big[\Delta^2_{n,z}(u+cv,x)+\Lambda_{n,z}^2(u+cv,x)\big]\big\}$.
Bearing in mind that
\begin{equation*}
f^{\prime}(z)=f(z)(\D{Y}B_1n)^{-1/2}\big(\E{T}{\sqrt{\D{Y}}}\Delta_{n,z}
+\E{Y}\sqrt{\D{T}}\Lambda_{n,z}\big),
\end{equation*}
we have the result.
\end{proof}

\begin{lemma}\label{sdrtfhfgjmf}
We have
\begin{equation*}
\begin{gathered}
0\leqslant\E{Y}-\int_{0}^{(u+cv)(1+x)}\P\left\{Y>z\right\}dz
\leqslant\frac{\E(Y^4)}{3((u+cv)(1+x))^3},
\\
0\leqslant\frac{\E({Y^2})}{2}-\int_{0}^{(u+cv)(1+x)}z\P\left\{Y>z\right\}dz
\leqslant\frac{\E(Y^4)}{2((u+cv)(1+x))^2}.
\end{gathered}
\end{equation*}
\end{lemma}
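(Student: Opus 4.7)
The plan is to reduce both inequalities to tail-integral estimates for a non-negative random variable, and then control those tail integrals by a fourth-moment Markov bound.

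First I would invoke the layer-cake identities for $Y\geqslant 0$, namely
\begin{equation*}
\E Y=\int_{0}^{\infty}\P\{Y>z\}\,dz,\qquad \tfrac{1}{2}\E(Y^{2})=\int_{0}^{\infty}z\,\P\{Y>z\}\,dz,
\end{equation*}
which both follow from Fubini's theorem. Writing $A=(u+cv)(1+x)>0$, the two differences in the lemma become exactly the tail integrals
\begin{equation*}
\E Y-\int_{0}^{A}\P\{Y>z\}\,dz=\int_{A}^{\infty}\P\{Y>z\}\,dz,
\end{equation*}
\begin{equation*}
\tfrac{1}{2}\E(Y^{2})-\int_{0}^{A}z\,\P\{Y>z\}\,dz=\int_{A}^{\infty}z\,\P\{Y>z\}\,dz.
\end{equation*}
The lower bounds $\geqslant 0$ are then immediate from the non-negativity of the integrand.

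For the upper bounds I would use the Markov inequality applied to $Y^{4}$, i.e.\ $\P\{Y>z\}=\P\{Y^{4}>z^{4}\}\leqslant\E(Y^{4})/z^{4}$ for $z>0$, which is available because $\E(Y^{4})<\infty$ by the hypothesis of Theorem~\ref{srdthjrf}. Substituting gives
\begin{equation*}
\int_{A}^{\infty}\P\{Y>z\}\,dz\leqslant \E(Y^{4})\int_{A}^{\infty}\frac{dz}{z^{4}}=\frac{\E(Y^{4})}{3A^{3}},
\end{equation*}
and
\begin{equation*}
\int_{A}^{\infty}z\,\P\{Y>z\}\,dz\leqslant \E(Y^{4})\int_{A}^{\infty}\frac{dz}{z^{3}}=\frac{\E(Y^{4})}{2A^{2}},
\end{equation*}
which are precisely the two claimed bounds after restoring $A=(u+cv)(1+x)$.

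There is no real obstacle: the argument is a routine tail estimate, and the only choice to be made is which moment bound to feed into Markov. Using the fourth moment is the minimal input needed so that both resulting tail integrals converge and so that the decay rates $A^{-3}$ and $A^{-2}$ come out with the explicit constants $1/3$ and $1/2$ appearing in the statement; any higher moment would yield sharper rates but is not needed later in the proof of Theorem~\ref{srdthjrf}, while any lower moment would be insufficient to make the first tail integral converge.
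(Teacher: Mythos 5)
Your proof is correct and coincides with the paper's own argument: the paper also rewrites each difference as the tail integral $\int_{(u+cv)(1+x)}^{\infty}\P\{Y>z\}\,dz$ (respectively with the factor $z$) and then applies the fourth-moment Chebychev--Markov bound $\P\{Y>z\}\leqslant\E(Y^4)/z^4$ before integrating. Nothing is missing; your explicit evaluation of the resulting integrals matches the stated constants $1/3$ and $1/2$.
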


\begin{proof}
The proof of Lemma~\ref{sdrtfhfgjmf} is straightforward from easy equalities
\begin{equation*}
\begin{gathered}
\E{Y}-\int_{0}^{(u+cv)(1+x)}\P\left\{Y>z\right\}dz=\int_{(u+cv)(1+x)}^{\infty}
\P\left\{Y>z\right\}dz,
\\
\frac{\E({Y^2})}{2}-\int_{0}^{(u+cv)(1+x)}z\P\left\{Y>z\right\}dz
=\int_{(u+cv)(1+x)}^{\infty}z\P\left\{Y>z\right\}dz,
\end{gathered}
\end{equation*}
and from Chebychev's inequality
$\P\left\{Y>z\right\}\leqslant\dfrac{\E(Y^4)}{z^{4}}$.
\end{proof}


Applying Lemmas~\ref{dfgfmgmh} and~\ref{sdrtfhfgjmf} to
$\expaN{t}^{[1]}(u,c,v)$, we reduce it to the sum
$\MainApprox{t}^{[1]}(u,c,v)+\CoR{t}^{[1]}(u,c,v)$, where
\begin{equation}\label{wsrdtgejhr}
\begin{aligned}
\MainApprox{t}^{[1]}(u,c,v)=\frac{(u+cv)\E{Y}}{2\pi
c\sqrt{\D{T}\D{Y}}}&\int_{0}^{\frac{c(t-v)}{u+cv}}
\frac{1}{1+x}\sum_{n=\EnOne}^{\infty}n^{-1}
\\[0pt]
&\times\exp\Big\{-\tfrac{1}{2}\Big[\Lambda_{n}^2(u+cv,x)+\Delta^2_{n}(u+cv,x)\Big]\Big\}dx
\end{aligned}
\end{equation}
and
\begin{equation}\label{2345ty45uj4r}
\CoR{t}^{[1]}(u,c,v)=\CoR{t}^{[1,1]}(u,c,v)+\CoR{t}^{[1,2]}(u,c,v),
\end{equation}
where
\begin{equation*}
\begin{aligned}
\CoR{t}^{[1,1]}(u,c,v)=K^{[1,1]}&\int_{0}^{\frac{c(t-v)}{u+cv}}
\frac{1}{1+x}\sum_{n=\EnOne}^{\infty}n^{-3/2}\Delta_{n}(u+cv,x)
\\[0pt]
&\times\exp\Big\{-\tfrac{1}{2}\Big[\Lambda_{n}^2(u+cv,x)+\Delta^2_{n}(u+cv,x)\Big]\Big\}dx,
\\[0pt]
\CoR{t}^{[1,2]}(u,c,v)=K^{[1,2]}&\int_{0}^{\frac{c(t-v)}{u+cv}}
\frac{1}{1+x}\sum_{n=\EnOne}^{\infty}n^{-3/2}\Lambda_{n}(u+cv,x)
\\[0pt]
&\times\exp\Big\{-\tfrac{1}{2}\Big[\Lambda_{n}^2(u+cv,x)+\Delta^2_{n}(u+cv,x)\Big]\Big\}dx
\end{aligned}
\end{equation*}
with
\begin{equation*}
K^{[1,1]}=\frac{(u+cv)\E({Y^2})}{4\pi\D{Y}
c\sqrt{\D{T}B_1}}\E{T}{\sqrt{\D{Y}}}, \quad
K^{[1,2]}=\frac{(u+cv)\E({Y^2})}{4\pi\D{Y} c\sqrt{\D{T}B_1}}\E{Y}\sqrt{\D{T}}.
\end{equation*}

Applying Lemmas~\ref{dfgfmgmh} and~\ref{sdrtfhfgjmf} to
$\expaN{t}^{[2]}(u,c,v)$, we reduce it to the sum
\begin{equation*}
\CoR{t}^{[2]}(u,c,v) =\CoR{t}^{[2,1]}(u,c,v)+\dots+\CoR{t}^{[2,6]}(u,c,v),
\end{equation*}
where
\begin{equation*}
\begin{aligned}
\CoR{t}^{[2,1]}(u,c,v)&=K^{[2,1]}\int_{0}^{\frac{c(t-v)}{u+cv}}\frac{1}{1+x}
\sum_{n=\EnOne}^{\infty}n^{-3/2}\Delta^3_{n}(u+cv,x)
\\[0pt]
&\hskip
70pt\times\exp\Big\{-\tfrac{1}{2}\Big[\Lambda_{n}^2(u+cv,x)+\Delta^2_{n}(u+cv,x)\Big]\Big\}dx,
\\[0pt]
\CoR{t}^{[2,2]}(u,c,v)&=-3 K^{[2,2]}
\int_{0}^{\frac{c(t-v)}{u+cv}}\frac{1}{1+x}\sum_{n=\EnOne}^{\infty}n^{-3/2}\Delta^2_{n}(u+cv,x)\Lambda_{n}(u+cv,x)
\\[0pt]
&\hskip
70pt\times\exp\Big\{-\tfrac{1}{2}\Big[\Lambda_{n}^2(u+cv,x)+\Delta^2_{n}(u+cv,x)\Big]\Big\}dx,
\\[0pt]
\CoR{t}^{[2,3]}(u,c,v)&=3K^{[2,3]}
\int_{0}^{\frac{c(t-v)}{u+cv}}\frac{1}{1+x}\sum_{n=\EnOne}^{\infty}n^{-3/2}\Delta_{n}(u+cv,x)\Lambda^2_{n}(u+cv,x)
\\[0pt]
&\hskip
70pt\times\exp\Big\{-\tfrac{1}{2}\Big[\Lambda_{n}^2(u+cv,x)+\Delta^2_{n}(u+cv,x)\Big]\Big\}dx,
\\[0pt]
\end{aligned}
\end{equation*}
\begin{equation*}
\begin{aligned}
\CoR{t}^{[2,4]}(u,c,v)&=-K^{[2,4]}\int_{0}^{\frac{c(t-v)}{u+cv}}
\frac{1}{1+x}\sum_{n=\EnOne}^{\infty}n^{-3/2}\Lambda^3_{n}(u+cv,x)
\\[0pt]
&\hskip
70pt\times\exp\Big\{-\tfrac{1}{2}\Big[\Lambda_{n}^2(u+cv,x)+\Delta^2_{n}(u+cv,x)\Big]\Big\}dx,
\\[0pt]
\CoR{t}^{[2,5]}(u,c,v)&=-3K^{[2,5]}
\int_{0}^{\frac{c(t-v)}{u+cv}}\frac{1}{1+x}\sum_{n=\EnOne}^{\infty}n^{-3/2}\Delta_{n}(u+cv,x)
\\[0pt]
&\hskip
70pt\times\exp\Big\{-\tfrac{1}{2}\Big[\Lambda_{n}^2(u+cv,x)+\Delta^2_{n}(u+cv,x)\Big]\Big\}dx,
\\[0pt]
\CoR{t}^{[2,6]}(u,c,v)&=3K^{[2,6]}
\int_{0}^{\frac{c(t-v)}{u+cv}}\frac{1}{1+x}\sum_{n=\EnOne}^{\infty}n^{-3/2}\Lambda_{n}(u+cv,x)
\\[0pt]
&\hskip
70pt\times\exp\Big\{-\tfrac{1}{2}\Big[\Lambda_{n}^2(u+cv,x)+\Delta^2_{n}(u+cv,x)\Big]\Big\}dx,
\end{aligned}
\end{equation*}
with
\begin{equation*}
\begin{aligned}
K^{[2,1]}&=K^{[2]}\bigg(\frac{\E{T}}{\sqrt{\D{T}B_1}}\bigg)^3\frac{B_2^3}{(\E{T})^3},
&K^{[2,2]}&=K^{[2]}\bigg(\frac{\E{T}}{\sqrt{\D{T}B_1}}\bigg)^3\frac{B_2^2\sqrt{B_4}}{(\E{T})^2},
\\[0pt]
K^{[2,3]}&=K^{[2]}\bigg(\frac{\E{T}}{\sqrt{\D{T}B_1}}\bigg)^3\frac{B_2B_4}{\E{T}},
&K^{[2,4]}&=K^{[2]}\bigg(\frac{\E{T}}{\sqrt{\D{T}B_1}}\bigg)^3B_4^{3/2},
\\[0pt]
K^{[2,5]}&=K^{[2]}\frac{B_2}{\sqrt{\D{T}B_1}},\quad
&K^{[2,6]}&=K^{[2]}\frac{\E{T}\sqrt{B_4}}{\sqrt{\D{T}B_1}},
\end{aligned}
\end{equation*}
and $K^{[2]}=\dfrac{(u+cv)\E(\tilde{T}^3)\E{Y}}{12\pi c\sqrt{\D{Y}\D{T}}}$.
Applying Lemmas~\ref{dfgfmgmh} and~\ref{sdrtfhfgjmf} to
$\expaN{t}^{[3]}(u,c,v)$, we reduce it to the sum
\begin{equation*}
\CoR{t}^{[3]}(u,c,v) =\CoR{t}^{[3,1]}(u,c,v)+\dots+\CoR{t}^{[3,6]}(u,c,v),
\end{equation*}
where
\begin{equation*}
\begin{aligned}
\CoR{t}^{[3,1]}(u,c,v)&=-K^{[3,1]}\int_{0}^{\frac{c(t-v)}{u+cv}}
\frac{1}{1+x}\sum_{n=\EnOne}^{\infty}n^{-3/2}\Delta_{n}^3(u+cv,x)
\\[0pt]
&\hskip 70pt\times
\exp\Big\{-\tfrac{1}{2}\Big[\Lambda_{n}^2(u+cv,x)+\Delta^2_{n}(u+cv,x)\Big]\Big\}dx,
\\[0pt]
\CoR{t}^{[3,2]}(u,c,v)&=-3K^{[3,2]}\int_{0}^{\frac{c(t-v)}{u+cv}}
\frac{1}{1+x}\sum_{n=\EnOne}^{\infty}n^{-3/2}\Delta_{n}^2(u+cv,x)\Lambda_{n}(u+cv,x)
\\[0pt]
&\hskip
70pt\times\exp\Big\{-\tfrac{1}{2}\Big[\Lambda_{n}^2(u+cv,x)+\Delta^2_{n}(u+cv,x)\Big]\Big\}dx,
\\[0pt]
\CoR{t}^{[3,3]}(u,c,v)&=-3K^{[3,3]}\int_{0}^{\frac{c(t-v)}{u+cv}}
\frac{1}{1+x}\sum_{n=\EnOne}^{\infty}n^{-3/2}\Delta_{n}(u+cv,x)\Lambda^2_{n}(u+cv,x)
\\[0pt]
&\hskip
70pt\times\exp\Big\{-\tfrac{1}{2}\Big[\Lambda_{n}^2(u+cv,x)+\Delta^2_{n}(u+cv,x)\Big]\Big\}dx,
\\[0pt]
\CoR{t}^{[3,4]}(u,c,v)&=-K^{[3,4]}\int_{0}^{\frac{c(t-v)}{u+cv}}
\frac{1}{1+x}\sum_{n=\EnOne}^{\infty}n^{-3/2}\Lambda^3_{n}(u+cv,x)
\\[4pt]
&\hskip
70pt\times\exp\Big\{-\tfrac{1}{2}\Big[\Lambda_{n}^2(u+cv,x)+\Delta^2_{n}(u+cv,x)\Big]\Big\}dx,
\end{aligned}
\end{equation*}
\begin{equation*}
\begin{aligned}
\CoR{t}^{[3,5]}(u,c,v)&=3{K^{[3,5]}}\int_{0}^{\frac{c(t-v)}{u+cv}}
\frac{1}{1+x}\sum_{n=\EnOne}^{\infty}n^{-3/2}\Delta_{n}(u+cv,x)
\\[0pt]
&\hskip
70pt\times\exp\Big\{-\tfrac{1}{2}\Big[\Lambda_{n}^2(u+cv,x)+\Delta^2_{n}(u+cv,x)\Big]\Big\}dx,
\\[0pt]
\CoR{t}^{[3,6]}(u,c,v)&=3K^{[3,6]}\int_{0}^{\frac{c(t-v)}{u+cv}}
\frac{1}{1+x}\sum_{n=\EnOne}^{\infty}n^{-3/2}\Lambda_{n}(u+cv,x)
\\[0pt]
&\hskip
70pt\times\exp\Big\{-\tfrac{1}{2}\Big[\Lambda_{n}^2(u+cv,x)+\Delta^2_{n}(u+cv,x)\Big]\Big\}dx,
\end{aligned}
\end{equation*}
with
\begin{equation*}
\begin{aligned}
{K^{[3,1]}}&=K^{[3]}\bigg(\frac{\E{Y}}{\sqrt{\D{Y}B_1}}\bigg)^3\frac{B^3_3}{(\E{Y})^3},
&K^{[3,2]}&=K^{[3]}\bigg(\frac{\E{Y}}{\sqrt{\D{Y}B_1}}\bigg)^3\frac{B^2_3\sqrt{B_4}}{(\E{Y})^2},
\\[0pt]
K^{[3,3]}&=K^{[3]}\bigg(\frac{\E{Y}}{\sqrt{\D{Y}B_1}}\bigg)^3\frac{B_3B_4}{\E{Y}},
&K^{[3,4]}&=K^{[3]}\bigg(\frac{\E{Y}}{\sqrt{\D{Y}B_1}}\bigg)^3B_4^{3/2},
\\[0pt]
K^{[3,5]}&=K^{[3]}\frac{B_3}{\sqrt{\D{Y}B_1}}, \quad
&K^{[3,6]}&=K^{[3]}\frac{\E{Y}\sqrt{B_4}}{\sqrt{\D{Y}B_1}},
\end{aligned}
\end{equation*}
and $K^{[3]}=\dfrac{(u+cv)\E({\tilde{Y}^3})\E{Y}}{12\pi c\sqrt{\D{Y}\D{T}}}$.
The rest of the proof consists in elaboration of all these summands, when
discarded are the terms of allowed order of smallness.

\subsection{Results needed for elaboration of $\MainApprox{t}^{[1]}(u,c,v)$ and
$\CoR{t}^{[1]}(u,c,v)$--$\CoR{t}^{[3]}(u,c,v)$}\label{fghtrgfjfgj}

Before continuing the exposition, we get together some auxiliary results.

\begin{lemma}[First decomposition of the factor $n^{-1/2}$]\label{drthyrtgfjhfg}
We have
\begin{equation*}
\frac{1}{\sqrt{n}}=\frac{\sqrt{B_4}}{\sqrt{B_1}}
\big(\Lambda_{n+1}(u+cv,x)-\Lambda_{n}(u+cv,x)\big)
+\frac{\sqrt{B_4}}{\sqrt{B_1}}\Lambda_{n+1}(u+cv,x)
\bigg(\frac{1}{2n}-\frac{1}{8n^2}+\dots\bigg).
\end{equation*}
\end{lemma}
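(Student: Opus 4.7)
The plan is to treat $\Lambda_n(u+cv,x)$ as a simple two-term function of $n$ (with the other variables as parameters) and exhibit a clean algebraic identity, then read off the claimed series expansion from the binomial series for $\sqrt{1+1/n}$. Writing $A = B_2(u+cv)(1+x) + B_3(u+cv)(x/c)$ and setting $a = \sqrt{B_1/B_4}$, $b = A/\sqrt{B_1 B_4}$, the definition gives
\begin{equation*}
\Lambda_n(u+cv,x) \;=\; \frac{B_1 n - A}{\sqrt{B_1 B_4\,n}} \;=\; a\sqrt{n} - \frac{b}{\sqrt{n}}.
\end{equation*}
Introducing $r = \sqrt{1 + 1/n}$, so that $\sqrt{n+1} = r\sqrt{n}$ and $1/\sqrt{n+1} = 1/(r\sqrt{n})$, we also have $\Lambda_{n+1} = a r \sqrt{n} - b/(r\sqrt{n})$.

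Next I would compute $\Lambda_{n+1} - \Lambda_n$ and $\Lambda_{n+1}(r-1)$ separately and add them. Direct manipulation gives
\begin{equation*}
\Lambda_{n+1} - \Lambda_n \;=\; a\sqrt{n}\,(r-1) + \tfrac{b}{\sqrt{n}}\bigl(1 - \tfrac{1}{r}\bigr) \;=\; (r-1)\Bigl(a\sqrt{n} + \tfrac{b}{r\sqrt{n}}\Bigr),
\end{equation*}
while $\Lambda_{n+1}(r-1) = (r-1)\bigl(a r\sqrt{n} - b/(r\sqrt{n})\bigr)$. The $b$-terms cancel when the two are summed, leaving
\begin{equation*}
(\Lambda_{n+1} - \Lambda_n) + \Lambda_{n+1}(r-1) \;=\; a\sqrt{n}\,(r-1)(1+r) \;=\; a\sqrt{n}\,(r^2 - 1) \;=\; \frac{a}{\sqrt{n}},
\end{equation*}
since $r^2 = 1 + 1/n$. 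Dividing through by $a$, and noting $1/a = \sqrt{B_4/B_1}$, yields the exact identity
\begin{equation*}
\frac{1}{\sqrt{n}} \;=\; \sqrt{\tfrac{B_4}{B_1}}\,\bigl(\Lambda_{n+1} - \Lambda_n\bigr) + \sqrt{\tfrac{B_4}{B_1}}\,\Lambda_{n+1}\,(r - 1).
\end{equation*}

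Finally I would insert the binomial expansion
\begin{equation*}
r - 1 \;=\; \sqrt{1 + 1/n}\; - \;1 \;=\; \tfrac{1}{2n} - \tfrac{1}{8n^2} + \tfrac{1}{16 n^3} - \dots,
\end{equation*}
valid for $n \geqslant 1$, which delivers the series in the statement. There is essentially no obstacle here: the only thing to watch is that the $b$-dependent (equivalently $A$-dependent, hence $u,c,v,x$-dependent) contributions really do cancel out cleanly in the sum $(\Lambda_{n+1} - \Lambda_n) + \Lambda_{n+1}(r-1)$, which is the whole point of pairing the difference $\Lambda_{n+1} - \Lambda_n$ with the specific correction factor $\Lambda_{n+1}(r-1)$ rather than, say, $\Lambda_n(r-1)$.
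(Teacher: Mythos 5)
Your proposal is correct and follows essentially the same route as the paper: the paper states the exact identity $\frac{1}{\sqrt{n}}=\sqrt{B_4/B_1}\,(\Lambda_{n+1}-\Lambda_{n})-\sqrt{B_4/B_1}\,\Lambda_{n+1}\big(1-\sqrt{1+1/n}\big)$ as ``easily verified'' and then inserts the binomial expansion of $\sqrt{1+1/n}-1$, which is exactly what you do. Your only addition is to carry out the verification explicitly by writing $\Lambda_n=a\sqrt{n}-b/\sqrt{n}$ and checking the cancellation of the $b$-terms, which is a fine way to fill in the step the paper leaves to the reader.
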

\begin{proof}
Bearing in mind that
$1-\sqrt{1+\frac{1}{n}}=-\frac{1}{2n}+\frac{1}{8n^2}-\dots$, it is
straightforward from the identity
\begin{equation*}
\frac{1}{\sqrt{n}}=\frac{\sqrt{B_4}}{\sqrt{B_1}}\big(\Lambda_{n+1}(u+cv,x)-\Lambda_{n}(u+cv,x)\big)
-\frac{\sqrt{B_4}}{\sqrt{B_1}}\,\Lambda_{n+1}(u+cv,x)
\bigg(1-\sqrt{1+\frac{1}{n}}\,\bigg),
\end{equation*}
which is easily verified.
\end{proof}

\begin{lemma}[Second decomposition of the factor $n^{-1/2}$]\label{sdfgtrhnj}
We have
\begin{multline*}
\frac{1}{\sqrt{n}}=\frac{\sqrt{\E{Y}}}{\sqrt{(u+cv)(1+x)}}\bigg[1
-\frac{1}{2\sqrt{B_1n}}\Big(\sqrt{B_4}\Lambda_{n}(u+cv,x)
+\frac{B_3}{\E{Y}}\Delta_{n}(u+cv,x)\Big)\bigg]
\\
-\frac{\sqrt{\E{Y}}}{\sqrt{(u+cv)(1+x)}}\bigg[\frac{1}{8B_1n}\Big(\sqrt{B_4}\Lambda_{n}(u+cv,x)
+\frac{B_3}{\E{Y}}\Delta_{n}(u+cv,x)\Big)^2
\\
\times\bigg(1+\frac{\sqrt{(u+cv)(1+x)}}{\sqrt{\E{Y}}}\frac{1}{\sqrt{n}}\,\bigg)^{-1}+\dots\bigg].
\end{multline*}
\end{lemma}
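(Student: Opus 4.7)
The statement is an exact algebraic identity dressed up as a Taylor-type decomposition, and the whole thing reduces to computing one particular linear combination of $\Lambda_{n}(u+cv,x)$ and $\Delta_{n}(u+cv,x)$. My plan is to run the computation in three stages: (i) identify the magic linear combination and simplify it; (ii) invert the resulting relation to get $(1/n)$ in closed form; and (iii) take a square root and apply a one-line identity for $\sqrt{1-\alpha}$ to separate off the leading correction with a remainder in the required shape.

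\textbf{Step 1 (the key algebraic simplification).} Using the definitions
\begin{equation*}
\Delta_{n}(u+cv,x)=\frac{(u+cv)\bigl((x/c)\E{Y}-(1+x)\E{T}\bigr)}{\sqrt{B_1n}},\quad
\Lambda_{n}(u+cv,x)=\frac{B_1n-(u+cv)\bigl(B_2(1+x)+B_3(x/c)\bigr)}{\sqrt{B_1B_4n}},
\end{equation*}
I compute $\sqrt{B_4}\,\Lambda_{n}+(B_3/\E{Y})\,\Delta_{n}$ directly. The terms involving $x/c$ cancel, and the remaining coefficient of $(u+cv)(1+x)$ becomes $B_2+B_3\E{T}/\E{Y}$. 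Since $B_2=\E{Y}\D{T}$ and $B_3=\E{T}\D{Y}$, one has $B_2+B_3\E{T}/\E{Y}=\bigl((\E{Y})^2\D{T}+(\E{T})^2\D{Y}\bigr)/\E{Y}=B_1/\E{Y}$. Thus
\begin{equation*}
\sqrt{B_4}\,\Lambda_{n}(u+cv,x)+\frac{B_3}{\E{Y}}\,\Delta_{n}(u+cv,x)=\sqrt{B_1n}\,\Bigl(1-\frac{(u+cv)(1+x)}{n\,\E{Y}}\Bigr).
\end{equation*}

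\textbf{Step 2 (solving for $1/\sqrt{n}$ exactly).} Dividing the last display by $\sqrt{B_1n}$ and rearranging gives $(u+cv)(1+x)/(n\E{Y})=1-\alpha$, where
\begin{equation*}
\alpha=\frac{1}{\sqrt{B_1n}}\Bigl(\sqrt{B_4}\,\Lambda_{n}(u+cv,x)+\frac{B_3}{\E{Y}}\,\Delta_{n}(u+cv,x)\Bigr).
\end{equation*}
Taking the square root,
\begin{equation*}
\frac{1}{\sqrt{n}}=\frac{\sqrt{\E{Y}}}{\sqrt{(u+cv)(1+x)}}\,\sqrt{1-\alpha}.
\end{equation*}
Observe that, with this choice of $\alpha$, the quantity $\sqrt{1-\alpha}$ equals $\sqrt{(u+cv)(1+x)/(n\E{Y})}=\sqrt{(u+cv)(1+x)}/(\sqrt{\E{Y}}\sqrt{n})$, which is exactly the $\xi$ appearing inside the parenthesis $(1+\xi)^{-1}$ in the target formula.

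\textbf{Step 3 (separating the Taylor-type correction).} Apply the elementary identity
\begin{equation*}
\sqrt{1-\alpha}=1-\frac{\alpha}{1+\sqrt{1-\alpha}},
\end{equation*}
which follows from $(1+\sqrt{1-\alpha})(1-\sqrt{1-\alpha})=\alpha$. Writing $\alpha/(1+\xi)=\alpha/2+\alpha\bigl(1/(1+\xi)-1/2\bigr)$ and using $1-\xi=\alpha/(1+\xi)$ to simplify the bracket, the leading correction splits off cleanly as $\alpha/2$, and the residual is a multiple of $\alpha^2$ carrying the advertised $(1+\xi)^{-1}$ factor; iterating or Taylor-expanding the resulting remainder once more produces the ``$+\dots$'' terms. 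Substituting $\xi=\sqrt{(u+cv)(1+x)/\E{Y}}/\sqrt{n}$ and reinstating $\alpha$ in terms of $\Lambda_{n}$ and $\Delta_{n}$ delivers the claimed decomposition.

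\textbf{Main obstacle.} The only non-routine point is Step~1: one has to guess that the right linear combination to form is $\sqrt{B_4}\,\Lambda_{n}+(B_3/\E{Y})\,\Delta_{n}$ and then notice the collapse $B_2+B_3\E{T}/\E{Y}=B_1/\E{Y}$ that turns the mess into $\sqrt{B_1n}\,(1-(u+cv)(1+x)/(n\E{Y}))$. Once this is in hand, Steps~2 and~3 are pure one-variable algebra driven by the square-root identity $1-\sqrt{1-\alpha}=\alpha/(1+\sqrt{1-\alpha})$, and the explicit form of the remainder term with the factor $(1+\sqrt{(u+cv)(1+x)/\E{Y}}/\sqrt{n})^{-1}$ falls out automatically because that factor is precisely $1+\sqrt{1-\alpha}$.
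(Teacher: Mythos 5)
Your argument is correct and essentially reproduces the paper's own proof: your Step~1 is the identity \eqref{srdthrjrt} (resting on the collapse $B_2+B_3\E{T}/\E{Y}=B_1/\E{Y}$), and your square-root identity $\sqrt{1-\alpha}=1-\alpha/(1+\sqrt{1-\alpha})$ with $\xi=\sqrt{1-\alpha}$ is precisely the representation \eqref{werfge}, which the paper then substitutes into itself where you instead split $\alpha/(1+\xi)$ exactly. The only (harmless) divergence is in the displayed second-order term --- your exact residual is $\tfrac{1}{2}\alpha^{2}(1+\xi)^{-2}$ whereas the lemma shows $\tfrac{1}{8}\alpha^{2}(1+\xi)^{-1}$ --- but this difference is of the same order as that term itself and is absorbed in the ``$\dots$'', which is all that the lemma's later use (only the $\alpha/2$ correction survives) requires.
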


\begin{proof}
The proof applies the following iterative process. We start with the identity
\begin{multline}\label{srdthrjrt}
\frac{1}{n}=\frac{\E{Y}}{(u+cv)(1+x)}-\frac{\E{Y}}{(u+cv)(1+x)\sqrt{B_1n}}
\\
\times\Big(\sqrt{B_4}\Lambda_{n}(u+cv,x)
+\frac{B_3}{\E{Y}}\Delta_{n}(u+cv,x)\Big)
\end{multline}
which is easy to verify straightforwardly. We rewrite it as
\begin{equation*}
1-\frac{(u+cv)(1+x)}{n\E{Y}}=\frac{1}{\sqrt{B_1n}}\Big(\sqrt{B_4}\Lambda_{n}(u+cv,x)
+\frac{B_3}{\E{Y}}\Delta_{n}(u+cv,x)\Big),
\end{equation*}
or
\begin{multline*}
\frac{\sqrt{(u+cv)(1+x)}}{\sqrt{n\E{Y}}}=1-\frac{1}{\sqrt{B_1n}}\Big(\sqrt{B_4}\Lambda_{n}(u+cv,x)
+\frac{B_3}{\E{Y}}\Delta_{n}(u+cv,x)\Big)
\\
\times\bigg(1+\frac{\sqrt{(u+cv)(1+x)}}{\sqrt{n\E{Y}}}\bigg)^{-1}.
\end{multline*}
It yields the following representation for $n^{-1/2}$:
\begin{multline}\label{werfge}
\frac{1}{\sqrt{n}}=\frac{\sqrt{\E{Y}}}{\sqrt{(u+cv)(1+x)}}
\bigg[1-\frac{1}{\sqrt{B_1n}}\Big(\sqrt{B_4}\Lambda_{n}(u+cv,x)
+\frac{B_3}{\E{Y}}\Delta_{n}(u+cv,x)\Big)
\\[-2pt]
\times\underbrace{\bigg(1+\frac{\sqrt{(u+cv)(1+x)}}{\sqrt{\E{Y}}}\frac{1}{\sqrt{n}}\,\bigg)^{-1}}_{\in[0,1]}\bigg].
\end{multline}
We put it in the expression marked with curly braces in the right-hand side of
\eqref{werfge}. This substitution of the expression for $n^{-1/2}$ into itself
yields the result, since
$\frac{1}{1+\sqrt{1-x}}=\frac12+\frac{x}{8}+\frac{x^2}{8}+\dots$,
as $x\to 0$.
\end{proof}


\begin{lemma}[Processing of exponential factor]\label{qwerhyjnrt}
We have
\begin{multline*}
\exp\Big\{-\tfrac{1}{2}\Delta^2_{n}(u+cv,x)\Big\}
=\exp\Bigg\{-\frac{1}{2}\Bigg(\dfrac{x-\frac{\E{T}}{\E{Y}}c(1+x)}{\frac{c\sqrt{B_1}}{(\E{Y})^{3/2}}
\sqrt{\frac{1+x}{u+cv}}}\Bigg)^2\Bigg\}
\\[4pt]
+\frac{\sqrt{B_4}}{2\sqrt{\E{Y}}\sqrt{B_1(u+cv)(1+x)}}
\Lambda_{n}(u+cv,x)\Bigg(\dfrac{x-\frac{\E{T}}{\E{Y}}c(1+x)}{\frac{c\sqrt{B_1}}{(\E{Y})^{3/2}}
\sqrt{\frac{1+x}{u+cv}}}\Bigg)^2
\exp\Bigg\{-\frac{1}{2}\Bigg(\dfrac{x-\frac{\E{T}}{\E{Y}}c(1+x)}{\frac{c\sqrt{B_1}}{(\E{Y})^{3/2}}
\sqrt{\frac{1+x}{u+cv}}}\Bigg)^2\Bigg\}
\\[4pt]
+\frac{B_3}{2\sqrt{\E{Y}}\sqrt{B_1(u+cv)(1+x)}}
\Bigg(\dfrac{x-\frac{\E{T}}{\E{Y}}c(1+x)}{\frac{c\sqrt{B_1}}{(\E{Y})^{3/2}}
\sqrt{\frac{1+x}{u+cv}}}\Bigg)^3\exp\Bigg\{-\frac{1}{2}
\Bigg(\dfrac{x-\frac{\E{T}}{\E{Y}}c(1+x)}{\frac{c\sqrt{B_1}}{(\E{Y})^{3/2}}
\sqrt{\frac{1+x}{u+cv}}}\Bigg)^2\Bigg\}+\,\cdots.
\end{multline*}
\end{lemma}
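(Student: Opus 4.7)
The plan is to Taylor-expand $\exp\{-\tfrac12\Delta_n^2(u+cv,x)\}$ about the value obtained by ``freezing'' $1/n$ at its leading approximation. The key preliminary observation is that $\Delta_n^2(u+cv,x)=A/n$, where $A:=(u+cv)^2\bigl((x/c)\E Y-(1+x)\E T\bigr)^2/B_1$ is independent of $n$, so the entire $n$-dependence enters linearly through the factor $1/n$. Applying identity \eqref{srdthrjrt} (the starting identity in the proof of Lemma~\ref{sdfgtrhnj}) to this single $1/n$ yields $\Delta_n^2 = \alpha_0 - (\alpha_0/\sqrt{B_1 n})\bigl(\sqrt{B_4}\,\Lambda_n(u+cv,x)+(B_3/\E Y)\Delta_n(u+cv,x)\bigr)+\dots$, where $\alpha_0:=A\,\E Y/[(u+cv)(1+x)]$ coincides, after a direct algebraic rearrangement, with the squared ratio appearing in the leading exponent of the lemma.

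Next, the Taylor formula $e^{-a-b}=e^{-a}(1-b+b^2/2-\dots)$ is applied with $a=\alpha_0/2$ and $b$ equal to the correction in $\Delta_n^2$ divided by $2$; this pulls out the factor $\exp\{-\alpha_0/2\}$ and leaves a power series in $(B_1 n)^{-1/2}$ whose coefficients are polynomials in $\Lambda_n(u+cv,x)$ and $\Delta_n(u+cv,x)$. To put each correction term in the explicit form displayed in the statement, I would then substitute the leading piece of Lemma~\ref{sdfgtrhnj}, namely $1/\sqrt n=\sqrt{\E Y/[(u+cv)(1+x)]}+\dots$, into every surviving factor of $1/\sqrt n$; and replace $\Delta_n$ by its leading value, which a short calculation identifies with $\sqrt{\alpha_0}$ bearing the sign of $x-\tfrac{\E T}{\E Y}c(1+x)$. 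The $\Lambda_n$-linear correction then contributes the second displayed term (carrying a factor of $\alpha_0=\bigl(\cdots\bigr)^2$), while the $\Delta_n$-linear correction contributes the third (where the extra $\sqrt{\alpha_0}$ promotes $\bigl(\cdots\bigr)^2$ to $\bigl(\cdots\bigr)^3$).

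The only substantive difficulty is error bookkeeping: the Taylor expansion spawns a cascade of cross terms of the form $\Lambda_n^j\Delta_n^k n^{-(j+k)/2}$, and one must check that every contribution beyond the three displayed can be absorbed into the ellipsis without degrading the global accuracy $\underline O(\ln(u+cv)/(u+cv)^2)$ demanded by Theorem~\ref{srdthjrf}. This control is provided by the Gaussian factor $\exp\{-\tfrac12(\Lambda_n^2+\Delta_n^2)\}$ (in the symmetric form supplied by Lemma~\ref{rtyujrtkj}), which, after the subsequent summation over $n\geqslant\EnOne$, yields the required estimates by the same technique as in \citeNP{[Malinovskii 2017]}.
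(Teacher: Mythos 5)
Your proposal is correct and is essentially the paper's own argument: both expand about the central index $n_0=(u+cv)(1+x)/\E{Y}$ via identity \eqref{srdthrjrt}, replace $n^{-1/2}$ by its leading value $\sqrt{\E{Y}/\big((u+cv)(1+x)\big)}$ and $\Delta_{n}(u+cv,x)$ by the displayed ratio, and relegate all remaining cross terms to the ellipsis, to be controlled by the Gaussian factor after the summation over $n$. Your variant of expanding the exponent directly (using that $\Delta^2_{n}(u+cv,x)$ is exactly linear in $1/n$) instead of Taylor-expanding $e^{-x^2/2}$ in $x=\Delta_{n}$ about $\Delta_{n_0}$ and invoking \eqref{ftyjhgmh} is only a cosmetic reorganization --- the factor $\tfrac12$ that the paper obtains from $\big(1+\sqrt{n_0/n}\,\big)^{-1}$ you obtain from the $\tfrac12$ in the exponent --- and it reproduces the same three terms.
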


\begin{proof}
Using Taylor's formula\footnote{Plainly, it writes as
$\exp\big\{-\tfrac{1}{2}x^2\big\}=\exp\big\{-\tfrac{1}{2}x_0^2\big\}-(x-x_0)x_0
\exp\big\{-\tfrac{1}{2}x_0^2\big\}+\dots$.}, we have
\begin{multline*}
\exp\Big\{-\tfrac{1}{2}\Delta^2_{n}(u+cv,x)\Big\}
=\exp\Big\{-\tfrac{1}{2}\Delta^2_{\frac{(u+cv)(1+x)}{\E{Y}}}(u+cv,x)\Big\}
\\
-\Big(\Delta_{n}(u+cv,x)-\Delta_{\frac{(u+cv)(1+x)}{\E{Y}}}(u+cv,x)\Big)
\Delta_{\frac{(u+cv)(1+x)}{\E{Y}}}(u+cv,x)
\\
\times\exp\Big\{-\tfrac{1}{2}\Delta^2_{\frac{(u+cv)(1+x)}{\E{Y}}}(u+cv,x)\Big\}+\dots,
\end{multline*}
where by definition of $\Delta_{n}(u+cv,x)$ and equation \eqref{werfge}, we
have
\begin{multline}\label{ftyjhgmh}
\Delta_{n}(u+cv,x)-\Delta_{\frac{(u+cv)(1+x)}{\E{Y}}}(u+cv,x)=
-\Delta_{\frac{(u+cv)(1+x)}{\E{Y}}}(u+cv,x)\frac{1}{\sqrt{B_1n}}
\\
\times\big(\sqrt{B_4}\Lambda_{n}(u+cv,x)
+\frac{B_3}{\E{Y}}\Delta_{n}(u+cv,x)\big)
\bigg(1+\sqrt{\frac{(u+cv)(1+x)}{\E{Y}}}\frac{1}{\sqrt{n}}\,\bigg)^{-1}.
\end{multline}
That yields
\begin{multline*}
\exp\Big\{-\tfrac{1}{2}\Delta^2_{n}(u+cv,x)\Big\}
-\exp\Big\{-\tfrac{1}{2}\Delta^2_{\frac{(u+cv)(1+x)}{\E{Y}}}(u+cv,x)\Big\}
\\
=\frac{\sqrt{B_4}}{2\sqrt{\E{Y}}\sqrt{B_1(u+cv)(1+x)}}\Lambda_{n}(u+cv,x)
\Delta^2_{\frac{(u+cv)(1+x)}{\E{Y}}}(u+cv,x)
\\
\times\exp\Big\{-\tfrac{1}{2}\Delta^2_{\frac{(u+cv)(1+x)}{\E{Y}}}(u+cv,x)\Big\}
\\
+\frac{B_3}{2\sqrt{\E{Y}}\sqrt{B_1(u+cv)(1+x)}}\Delta^3_{\frac{(u+cv)(1+x)}{\E{Y}}}(u+cv,x)
\\
\times\exp\Big\{-\tfrac{1}{2}\Delta^2_{\frac{(u+cv)(1+x)}{\E{Y}}}(u+cv,x)\Big\}+\dots,
\end{multline*}
as required.
\end{proof}

\subsection{Elaboration of $\MainApprox{t}^{[1]}(u,c,v)$}\label{ergwerghwergh}

Let us formulate the main result of this section\footnote{We can easily prove
that the remainder term in Lemma~\ref{wertgrherw} is of order
$\underline{O}((u+cv)^{-2})$. But it is not essential since
Lemma~\ref{wertgrherw}, as well as Lemmas~\ref{tyujrtjjk}--\ref{wrthyjkt}
formulated below are used as components in the proof of the fundamental
Theorem~\ref{srdthjrf}. The rate
$\underline{O}\big(\frac{\ln(u+cv)}{(u+cv)^2}\big)$ in this theorem is due to
our estimation of the remainder terms, which details are given in
\citeNP{[Malinovskii 2017]}.}.

\begin{lemma}\label{wertgrherw}
We have
\begin{equation*}
\sup_{t>v}\Big|\,\MainApprox{t}^{[1]}(u,c,v)
-\Int{t}{M}(u,c,v)+\frac{\E{T}\D{Y}}{2cD^2(\E{Y})^2}(\Int{t}{F}(u,c,v)-
\Int{t}{S}(u,c,v))\Big|=\underline{O}\bigg(\frac{\ln(u+cv)}{(u+cv)^2}\bigg),
\end{equation*}
as $u+cv\to\infty$.
\end{lemma}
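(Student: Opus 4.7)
The plan is to convert the inner sum in \eqref{wsrdtgejhr} into a Gaussian integral in the variable $\Lambda_n(u+cv,x)$, identify the leading contribution as $\Int{t}{M}(u,c,v)$, and show that the only two subleading pieces which survive the symmetric $\Lambda$-integration combine into $-\frac{\E{T}\D{Y}}{2cD^2(\E{Y})^2}(\Int{t}{F}-\Int{t}{S})$.

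I would split $n^{-1}=n^{-1/2}\cdot n^{-1/2}$ and treat the two factors asymmetrically. To the first I apply Lemma \ref{drthyrtgfjhfg}, which produces the telescoping increment $(\Lambda_{n+1}-\Lambda_n)\sqrt{B_1/B_4}$, so that the summation in $n$ becomes a Riemann sum in $\Lambda$; I replace it by $\sqrt{B_4/B_1}\int_{-\infty}^{\infty}d\Lambda$, incurring only a discretisation error of the required order together with an exponentially small truncation error from $n\geqslant\EnOne$. To the second $n^{-1/2}$ I apply Lemma \ref{sdfgtrhnj}: its leading term $\sqrt{\E{Y}/((u+cv)(1+x))}$ is $n$-independent and exits the sum, while the subleading pieces carry either a $\Lambda_n$ factor (odd, killed by $\int\Lambda e^{-\Lambda^2/2}\,d\Lambda=0$) or the coefficient $\frac{B_3}{\E{Y}}\Delta_n$. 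Finally, to decouple $\exp\{-\tfrac12\Delta_n^2\}$ from $n$, I invoke Lemma \ref{qwerhyjnrt}, which writes this factor as $\exp\{-\tfrac12\Delta_{*}^2\}$ plus a $\Lambda_n$-linear correction (again odd, killed) plus a pure $\Delta_*^3$ correction, where $\Delta_*$ denotes the $n$-independent Gaussian exponent displayed there.

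After the $\Lambda$-integration $\int_{-\infty}^{\infty}e^{-\Lambda^2/2}d\Lambda=\sqrt{2\pi}$, the identities $B_1=D^2(\E{Y})^3$ and $B_4=\D{T}\D{Y}$ collapse the outer constants to unity, and the leading contribution becomes
\begin{equation*}
\int_{0}^{\frac{c(t-v)}{u+cv}}\frac{1}{1+x}\,\Ugauss{cM(1+x)}{\frac{c^2D^2(1+x)}{u+cv}}(x)\,dx=\Int{t}{M}(u,c,v).
\end{equation*}
Exactly two corrections of order $(u+cv)^{-1}$ survive. The first comes from the $\frac{B_3}{\E{Y}}\Delta_n$-piece of Lemma \ref{sdfgtrhnj}; using $B_3=\E{T}\D{Y}$ and the explicit form of $\Delta_*$, it evaluates to $-\frac{\E{T}\D{Y}}{2cD^2(\E{Y})^2}\Int{t}{F}(u,c,v)$. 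The second comes from the $\Delta_*^3$-piece of Lemma \ref{qwerhyjnrt}, and the same bookkeeping produces $+\frac{\E{T}\D{Y}}{2cD^2(\E{Y})^2}\Int{t}{S}(u,c,v)$. Summed, they yield exactly the correction claimed.

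The main obstacle is uniform-in-$t$ control of the residual. One must simultaneously absorb into $\underline{O}(\ln(u+cv)/(u+cv)^2)$: the Riemann discretisation error from step one, the ellipsis tails of Lemmas \ref{sdfgtrhnj} and \ref{qwerhyjnrt}, the approximate rather than exact cancellation of the odd-in-$\Lambda$ contributions over the truncated Riemann grid, and the truncation error from restricting $n\geqslant\EnOne$. The last two rely on the non-uniform Berry--Esseen type bounds and large-deviation estimates imported from \citeNP{[Malinovskii 2017]}, which use the moment assumptions $\E T^4<\infty$ and $\E Y^4<\infty$; the first two are handled by standard Gaussian-tail estimates. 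The genuinely new content beyond \citeNP{[Malinovskii 2017]} is the bookkeeping that isolates precisely these two surviving corrections and verifies that their prefactors match with the correct relative sign.
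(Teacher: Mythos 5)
Your proposal is correct and follows essentially the same route as the paper: the same three decompositions (Lemma~\ref{qwerhyjnrt} for the $\Delta$-exponential, Lemma~\ref{sdfgtrhnj} and the telescoping Lemma~\ref{drthyrtgfjhfg} for the two halves of $n^{-1}$, then the integral-sum approximation of Lemma~\ref{qaerfgwehg}), with the odd-in-$\Lambda$ terms vanishing and exactly the two surviving corrections $-\frac{\E{T}\D{Y}}{2cD^2(\E{Y})^2}\Int{t}{F}$ and $+\frac{\E{T}\D{Y}}{2cD^2(\E{Y})^2}\Int{t}{S}$ with the correct signs and constants, and the residuals controlled as in \citeNP{[Malinovskii 2017]}.
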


\begin{remark}[Notation agreement]
For brevity, we will use simplified notation with sign $\Rightarrow$ for the
approximation, like
$\MainApprox{t}^{[1]}(u,c,v)\Rightarrow\Int{t}{M}(u,c,v)-\frac{\E{T}\D{Y}}{2cD^2(\E{Y})^2}(\Int{t}{F}(u,c,v)
-\Int{t}{S}(u,c,v))$ in the statement of Lemma~\ref{wertgrherw}.
\end{remark}

\begin{proof}
First stage of the proof consists in processing the factor
$\exp\big\{-\tfrac{1}{2}\Delta^2_{n}(u+cv,x)\big\}$ in
$\MainApprox{t}^{[1]}(u,c,v)$ by means of Lemma~\ref{qwerhyjnrt}. It is easy to
verify that holding the required accuracy, we have
$\MainApprox{t}^{[1]}(u,c,v)$ approximated by the sum
\begin{equation*}
\MainApprox{t}^{[1,1]}(u,c,v)+\MainApprox{t}^{[1,2]}(u,c,v)
+\MainApprox{t}^{[1,3]}(u,c,v),
\end{equation*}
where
\begin{equation*}
\begin{aligned}
\MainApprox{t}^{[1,1]}(u,c,v)&=\frac{(u+cv)\E{Y}}{2\pi
c\sqrt{\D{T}\D{Y}}}\int_{0}^{\frac{c(t-v)}{u+cv}} \frac{1}{1+x}
\\[0pt]
&\times\exp\Bigg\{-\frac{1}{2}\Bigg(\dfrac{x-\frac{\E{T}}{\E{Y}}c(1+x)}{\frac{c\sqrt{B_1}}{(\E{Y})^{3/2}}
\sqrt{\frac{1+x}{u+cv}}}\Bigg)^2\Bigg\}\sum_{n=\EnOne}^{\infty}n^{-1}\exp\Big\{-\tfrac{1}{2}
\Lambda_{n}^2(u+cv,x)\Big\}dx,
\\[0pt]
\MainApprox{t}^{[1,2]}(u,c,v)&=\frac{(u+cv)\E{Y}}{2\pi
c\sqrt{\D{T}\D{Y}}}\frac{\sqrt{B_4}}{2\sqrt{\E{Y}}\sqrt{B_1(u+cv)}}\int_{0}^{\frac{c(t-v)}{u+cv}}
\frac{1}{(1+x)^{3/2}}
\\[0pt]
&\times\Bigg(\dfrac{x-\frac{\E{T}}{\E{Y}}c(1+x)}{\frac{c\sqrt{B_1}}{(\E{Y})^{3/2}}
\sqrt{\frac{1+x}{u+cv}}}\Bigg)^2\exp\Bigg\{-\frac{1}{2}\Bigg(\dfrac{x-\frac{\E{T}}{\E{Y}}c(1+x)}{\frac{c\sqrt{B_1}}{(\E{Y})^{3/2}}
\sqrt{\frac{1+x}{u+cv}}}\Bigg)^2\Bigg\}
\\[0pt]
&\times\sum_{n=\EnOne}^{\infty}n^{-1}\Lambda_{n}(u+cv,x)
\exp\Big\{-\tfrac{1}{2}\Lambda_{n}^2(u+cv,x)\Big\}dx,
\end{aligned}
\end{equation*}
\begin{equation*}
\begin{aligned}
\MainApprox{t}^{[1,3]}(u,c,v)&=\frac{(u+cv)\E{Y}}{2\pi
c\sqrt{\D{T}\D{Y}}}\frac{B_3}{2\sqrt{\E{Y}}\sqrt{B_1(u+cv)}}\int_{0}^{\frac{c(t-v)}{u+cv}}
\frac{1}{(1+x)^{3/2}}
\Bigg(\dfrac{x-\frac{\E{T}}{\E{Y}}c(1+x)}{\frac{c\sqrt{B_1}}{(\E{Y})^{3/2}}
\sqrt{\frac{1+x}{u+cv}}}\Bigg)^3
\\[0pt]
&\times\exp\Bigg\{-\frac{1}{2}\Bigg(\dfrac{x-\frac{\E{T}}{\E{Y}}c(1+x)}{\frac{c\sqrt{B_1}}{(\E{Y})^{3/2}}
\sqrt{\frac{1+x}{u+cv}}}\Bigg)^2\Bigg\}
\sum_{n=\EnOne}^{\infty}n^{-1}\exp\Big\{-\tfrac{1}{2}
\Lambda_{n}^2(u+cv,x)\Big\}dx.
\end{aligned}
\end{equation*}
It is noteworthy, using identity \eqref{srdthrjrt} and Lemma~\ref{qaerfgwehg},
that
\begin{equation}\label{srdtghnf}
\begin{aligned}
\sum_{n=\EnOne}^{\infty}&n^{-1}\Lambda_{n}(u+cv,x)\exp\Big\{-\tfrac{1}{2}\Lambda_{n}^2(u+cv,x)\Big\}
\Rightarrow 0,
\\
\sum_{n=\EnOne}^{\infty}&n^{-1}\exp\Big\{-\tfrac{1}{2}\Lambda_{n}^2(u+cv,x)\Big\}
\Rightarrow\sqrt{2\pi}
\frac{\sqrt{\E{Y}}}{\sqrt{(u+cv)(1+x)}}\frac{\sqrt{B_4}}{\sqrt{B_1}},
\end{aligned}
\end{equation}
and that
\begin{multline}\label{qwerfgwer}
\int_{0}^{\frac{c(t-v)}{u+cv}}\frac{1}{(1+x)^2}
\Bigg(\dfrac{x-\frac{\E{T}}{\E{Y}}c(1+x)}{\frac{c\sqrt{B_1}}{(\E{Y})^{3/2}}
\sqrt{\frac{1+x}{u+cv}}}\Bigg)^3\underbrace{\frac{1}{\sqrt{2\pi}}\exp\Bigg\{-\frac{1}{2}
\Bigg(\dfrac{x-\frac{\E{T}}{\E{Y}}c(1+x)}{\frac{c\sqrt{B_1}}{(\E{Y})^{3/2}}
\sqrt{\frac{1+x}{u+cv}}}\Bigg)^2\Bigg\}}_{\sqrt{\frac{c^2
D^2(1+x)}{u+cv}}\Ugauss{cM(1+x)}{\frac{c^2D^2(1+x)}{u+cv}}(x)}dx
\\[0pt]
=\frac{(u+cv)}{(cD)^2}\int_{0}^{\frac{c(t-v)}{u+cv}}
\dfrac{(x-\frac{\E{T}}{\E{Y}}c(1+x))^3}{(1+x)^3}\Ugauss{cM(1+x)}{\frac{c^2D^2(1+x)}{u+cv}(x)}dx
=\Int{t}{S}(u,c,v).
\end{multline}

We have
\begin{equation}\label{ewrth4y5hjrtj}
\begin{aligned}
&\MainApprox{t}^{[1,1]}(u,c,v)\Rightarrow\text{see next stage},
\\[0pt]
&\MainApprox{t}^{[1,2]}(u,c,v)\Rightarrow 0,
\\[0pt]
&\MainApprox{t}^{[1,3]}(u,c,v)\Rightarrow
\frac{}{}\frac{B_3\E{Y}\sqrt{B_4}}{2c\sqrt{\D{T}\D{Y}}B_1}\,\Int{t}{S}(u,c,v)
\eqOK\frac{\E{T}\D{Y}}{2cD^2(\E{Y})^2}\,\Int{t}{S}(u,c,v).
\end{aligned}
\end{equation}

Second stage is transformation of the factor $n^{1/2}$ in the summand
$\MainApprox{t}^{[1,1]}(u,c,v)$ by use of Lemma~\ref{sdfgtrhnj}. We have
\begin{equation*}
\MainApprox{t}^{[1,1]}(u,c,v)=\MainApprox{t}^{[1,1,1]}(u,c,v)+\MainApprox{t}^{[1,1,2]}(u,c,v)
+\MainApprox{t}^{[1,1,3]}(u,c,v)+\dots,
\end{equation*}
where
\begin{equation*}
\begin{aligned}
\MainApprox{t}^{[1,1,1]}(u,c,v)&=\frac{(u+cv)\E{Y}}{2\pi
c\sqrt{\D{T}\D{Y}}}\frac{\sqrt{\E{Y}}}{\sqrt{(u+cv)}}\int_{0}^{\frac{c(t-v)}{u+cv}}
\frac{1}{(1+x)^{3/2}}
\\[0pt]
&\times
\exp\Bigg\{-\frac{1}{2}\Bigg(\dfrac{x-\frac{\E{T}}{\E{Y}}c(1+x)}{\frac{c\sqrt{B_1}}{(\E{Y})^{3/2}}
\sqrt{\frac{1+x}{u+cv}}}\Bigg)^2\Bigg\} \sum_{n=\EnOne}^{\infty}n^{-1/2}
\exp\Big\{-\tfrac{1}{2}\Lambda_{n}^2(u+cv,x)\Big\}dx,
\\[0pt]
\MainApprox{t}^{[1,1,2]}(u,c,v)&=-\frac{\sqrt{B_4}}{2\sqrt{B_1}}\frac{(u+cv)\E{Y}}{2\pi
c\sqrt{\D{T}\D{Y}}}\frac{\sqrt{\E{Y}}}{\sqrt{(u+cv)}}\int_{0}^{\frac{c(t-v)}{u+cv}}
\frac{1}{(1+x)^{3/2}}
\\[0pt]
&\times\exp\Bigg\{-\frac{1}{2}\Bigg(\dfrac{x-\frac{\E{T}}{\E{Y}}c(1+x)}
{\frac{c\sqrt{B_1}}{(\E{Y})^{3/2}} \sqrt{\frac{1+x}{u+cv}}}\Bigg)^2\Bigg\}
\sum_{n=\EnOne}^{\infty}n^{-1}\Lambda_{n}(u+cv,x)
\\[0pt]
&\times \exp\Big\{-\tfrac{1}{2}\Lambda_{n}^2(u+cv,x)\Big\}dx,
\end{aligned}
\end{equation*}
\begin{equation*}
\begin{aligned}
\MainApprox{t}^{[1,1,3]}(u,c,v)&=-\frac{1}{2\sqrt{B_1}}\frac{B_3}{\E{Y}}\frac{(u+cv)\E{Y}}{2\pi
c\sqrt{\D{T}\D{Y}}}\frac{\sqrt{\E{Y}}}{\sqrt{(u+cv)}}\int_{0}^{\frac{c(t-v)}{u+cv}}
\frac{1}{(1+x)^{3/2}}
\\[0pt]
&\times\Bigg(\dfrac{x-\frac{\E{T}}{\E{Y}}c(1+x)}{\frac{c\sqrt{B_1}}{(\E{Y})^{3/2}}
\sqrt{\frac{1+x}{u+cv}}}\Bigg)\exp\Bigg\{-\frac{1}{2}\Bigg(\dfrac{x-\frac{\E{T}}{\E{Y}}c(1+x)}{\frac{c\sqrt{B_1}}{(\E{Y})^{3/2}}
\sqrt{\frac{1+x}{u+cv}}}\Bigg)^2\Bigg\}
\\[0pt]
&\times\sum_{n=\EnOne}^{\infty}n^{-1}\exp\Big\{-\tfrac{1}{2}\Lambda_{n}^2(u+cv,x)\Big\}dx.
\end{aligned}
\end{equation*}

Bearing in mind \eqref{srdtghnf}, it is noteworthy that
\begin{multline}\label{wqergthrt}
\int_{0}^{\frac{c(t-v)}{u+cv}}
\frac{1}{(1+x)^{2}}\Bigg(\dfrac{x-\frac{\E{T}}{\E{Y}}c(1+x)}{\frac{c\sqrt{B_1}}{(\E{Y})^{3/2}}
\sqrt{\frac{1+x}{u+cv}}}\Bigg)\underbrace{\frac{1}{\sqrt{2\pi}}\exp\Bigg\{-\frac{1}{2}\Bigg(\dfrac{x-\frac{\E{T}}{\E{Y}}c(1+x)}{\frac{c\sqrt{B_1}}{(\E{Y})^{3/2}}
\sqrt{\frac{1+x}{u+cv}}}\Bigg)^2\Bigg\}}_{\sqrt{\frac{c^2
D^2(1+x)}{u+cv}}\Ugauss{cM(1+x)}{\frac{c^2D^2(1+x)}{u+cv}}(x)}dx
\\[-6pt]
=\int_{0}^{\frac{c(t-v)}{u+cv}}
\dfrac{x-\frac{\E{T}}{\E{Y}}c(1+x)}{(1+x)^{2}}\Ugauss{cM(1+x)}{\frac{c^2D^2(1+x)}{u+cv}}(x)dx
\eqOK\Int{t}{F}(u,c,v).
\end{multline}

We have
\begin{equation}\label{wertgrejh}
\begin{aligned}
&\MainApprox{t}^{[1,1,1]}(u,c,v)\Rightarrow\text{see next stage},
\\[0pt]
&\MainApprox{t}^{[1,1,2]}(u,c,v)\Rightarrow 0,
\\[0pt]
&\MainApprox{t}^{[1,1,3]}(u,c,v)=
-\frac{B_3\E{Y}}{2cB_1}\,\Int{t}{F}(u,c,v)\eqOK-\frac{\E{T}\D{Y}}{2cD^2(\E{Y})^2}\,\Int{t}{F}(u,c,v).
\end{aligned}
\end{equation}

Third stage is transformation of the factor $n^{1/2}$ in the summand
$\MainApprox{t}^{[1,1,1]}(u,c,v)$ by use of Lemma~\ref{drthyrtgfjhfg}. We have
\begin{equation*}
\MainApprox{t}^{[1,1,1]}(u,c,v)=\MainApprox{t}^{[1,1,1,1]}(u,c,v)+\MainApprox{t}^{[1,1,1,2]}(u,c,v)+\dots,
\end{equation*}
where
\begin{equation*}
\begin{aligned}
\MainApprox{t}^{[1,1,1,1]}(u,c,v)&=\frac{\sqrt{B_4}}{\sqrt{B_1}}\frac{(u+cv)\E{Y}}{2\pi
c\sqrt{\D{T}\D{Y}}}\frac{\sqrt{\E{Y}}}{\sqrt{(u+cv)}}\int_{0}^{\frac{c(t-v)}{u+cv}}
\frac{1}{(1+x)^{3/2}}
\\[0pt]
&\times\exp\Bigg\{-\frac{1}{2}\Bigg(\dfrac{x-\frac{\E{T}}{\E{Y}}c(1+x)}{\frac{c\sqrt{B_1}}{(\E{Y})^{3/2}}
\sqrt{\frac{1+x}{u+cv}}}\Bigg)^2\Bigg\}
\\[-2pt]
&\times\sum_{n=\EnOne}^{\infty}
\exp\Big\{-\tfrac{1}{2}\Lambda_{n}^2(u+cv,x)\Big\}
\big(\Lambda_{n+1}(u+cv,x)-\Lambda_{n}(u+cv,x)\big)dx,
\end{aligned}
\end{equation*}
\begin{equation*}
\begin{aligned}
\MainApprox{t}^{[1,1,1,2]}(u,c,v)&=\frac{1}{2}\frac{\sqrt{B_4}}{\sqrt{B_1}}\frac{(u+cv)\E{Y}}{2\pi
c\sqrt{\D{T}\D{Y}}}\frac{\sqrt{\E{Y}}}{\sqrt{(u+cv)}}\int_{0}^{\frac{c(t-v)}{u+cv}}
\frac{1}{(1+x)^{3/2}}
\\[0pt]
&\times\exp\Bigg\{-\frac{1}{2}\Bigg(\dfrac{x-\frac{\E{T}}{\E{Y}}c(1+x)}{\frac{c\sqrt{B_1}}{(\E{Y})^{3/2}}
\sqrt{\frac{1+x}{u+cv}}}\Bigg)^2\Bigg\}
\\[0pt]
&\times\sum_{n=\EnOne}^{\infty}
\bigg(\frac{1}{n}-\frac{1}{4n^2}+\dots\bigg)\Lambda_{n+1}(u+cv,x)
\exp\Big\{-\tfrac{1}{2}\Lambda_{n}^2(u+cv,x)\Big\}dx.
\end{aligned}
\end{equation*}

We have
\begin{equation}\label{wretrjr}
\begin{aligned}
&\MainApprox{t}^{[1,1,1,1]}(u,c,v)\Rightarrow\text{see next stage},
\\[0pt]
&\MainApprox{t}^{[1,1,2,1]}(u,c,v)\Rightarrow 0.
\end{aligned}
\end{equation}

Fourth stage is approximation of integral sum in
$\MainApprox{t}^{[1,1,1,1]}(u,c,v)$ by use of Lemma~\ref{qaerfgwehg} applied to
the factor
\begin{equation*}
\begin{aligned}
\sum_{n=\EnOne}^{\infty}
\exp\Big\{-\tfrac{1}{2}\Lambda_{n}^2(u+cv,x)\Big\}\big(\Lambda_{n+1}(u+cv,x)-\Lambda_{n}(u+cv,x)\big)dx
\end{aligned}
\end{equation*}
in the expression for $\MainApprox{t}^{[1,1,1,1]}(u,c,v)$. We bear in mind that
\begin{multline*}
\int_{0}^{\frac{c(t-v)}{u+cv}}
\frac{1}{(1+x)^{3/2}}\underbrace{\frac{1}{\sqrt{2\pi}}\exp\Bigg\{-\frac{1}{2}\Bigg(\dfrac{x-\frac{\E{T}}{\E{Y}}c(1+x)}{\frac{c\sqrt{B_1}}{(\E{Y})^{3/2}}
\sqrt{\frac{1+x}{u+cv}}}\Bigg)^2\Bigg\}}_{\sqrt{\frac{c^2
D^2(1+x)}{u+cv}}\Ugauss{cM(1+x)}{\frac{c^2D^2(1+x)}{u+cv}}(x)}dx
\\[-4pt]
=\frac{c
D}{\sqrt{u+cv}}\int_{0}^{\frac{c(t-v)}{u+cv}}\frac{1}{(1+x)}\Ugauss{cM(1+x)}{\frac{c^2D^2(1+x)}{u+cv}}(x)dx
\eqOK\frac{c D}{\sqrt{u+cv}}\Int{t}{M}(u,c,v).
\end{multline*}

We have
$\MainApprox{t}^{[1,1,1,1]}(u,c,v)\Rightarrow\Int{t}{M}(u,c,v)$.
Together with \eqref{ewrth4y5hjrtj}, \eqref{wertgrejh}, \eqref{wretrjr}, it
gives the desired result.
\end{proof}

\subsection{Elaboration of $\CoR{t}^{[1]}(u,c,v)$}\label{tyujetijer}


\begin{lemma}\label{tyujrtjjk}
We have
\begin{equation*}
\sup_{t>v}\Big|\,\CoR{t}^{[1]}(u,c,v)
-\Big(\frac{\E{T}\D{Y}}{2cD^2(\E{Y})^2}+\frac{\E{T}}{2cD^2}\Big)\Int{t}{F}(u,c,v)\Big|
=\underline{O}\bigg(\frac{\ln(u+cv)}{(u+cv)^2}\bigg),
\end{equation*}
as $u+cv\to\infty$.
\end{lemma}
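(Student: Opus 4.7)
The plan is to adapt the multi-stage reduction used in the proof of Lemma~\ref{wertgrherw}. Since $\CoR{t}^{[1]}(u,c,v)=\CoR{t}^{[1,1]}(u,c,v)+\CoR{t}^{[1,2]}(u,c,v)$ (see \eqref{2345ty45uj4r}), I would handle the two pieces separately. Both carry the prefactor $n^{-3/2}$; the first contains $\Delta_{n}(u+cv,x)$ and the second contains $\Lambda_{n}(u+cv,x)$. I expect that the whole contribution to the main term of Lemma~\ref{tyujrtjjk} comes from $\CoR{t}^{[1,1]}$, while $\CoR{t}^{[1,2]}\Rightarrow 0$.

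For $\CoR{t}^{[1,1]}$, first I would use Lemma~\ref{qwerhyjnrt} to replace $\exp\{-\tfrac{1}{2}\Delta_n^2(u+cv,x)\}$ by the ``frozen'' exponential $\exp\{-\tfrac{1}{2}\Delta_{(u+cv)(1+x)/\E{Y}}^2(u+cv,x)\}$, which, combined with the prefactor $\sqrt{2\pi c^2 D^2 (1+x)/(u+cv)}$, becomes the Gaussian density $\Ugauss{cM(1+x)}{c^2 D^2(1+x)/(u+cv)}(x)$ appearing in the definition of $\Int{t}{F}(u,c,v)$. Correction terms produced by Taylor expansion are either odd in $\Lambda$ (hence vanish at leading order by \eqref{srdtghnf}) or of higher order. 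Next, exploit the explicit $n$-dependence $\Delta_n=C(x)/\sqrt{n}$, with $C(x)=(u+cv)[x\E{Y}/c-(1+x)\E{T}]/\sqrt{B_1}$, to rewrite $n^{-3/2}\Delta_n$ as $C(x)\,n^{-2}$. The resulting sum $\sum_{n\geqslant\EnOne}n^{-2}\exp\{-\tfrac{1}{2}\Lambda_n^2(u+cv,x)\}$ is then approximated by its integral analogue via Lemma~\ref{qaerfgwehg}, yielding $\sqrt{2\pi\,B_4/B_1}\,((u+cv)(1+x)/\E{Y})^{-3/2}$ at leading order. After collecting constants, using $B_1=(\E{Y})^3 D^2$, $B_4=\D{Y}\D{T}$, and $\E(Y^2)=\D{Y}+(\E{Y})^2$, the main term of $\CoR{t}^{[1,1]}$ becomes
\begin{equation*}
\frac{\E{T}\,\E(Y^2)}{2cD^2(\E{Y})^2}\,\Int{t}{F}(u,c,v)
=\Big(\frac{\E{T}\D{Y}}{2cD^2(\E{Y})^2}+\frac{\E{T}}{2cD^2}\Big)\Int{t}{F}(u,c,v),
\end{equation*}
which is exactly the expression claimed in the lemma.

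For $\CoR{t}^{[1,2]}$ the same freezing of $\exp\{-\tfrac{1}{2}\Delta_n^2\}$ applies, but the remaining sum is $\sum_{n\geqslant\EnOne}n^{-3/2}\Lambda_n(u+cv,x)\exp\{-\tfrac{1}{2}\Lambda_n^2(u+cv,x)\}$, whose leading Riemann-sum approximation is $n_0^{-3/2}\sqrt{B_4/B_1}\int_{-\infty}^{\infty}\Lambda\,e^{-\Lambda^2/2}d\Lambda=0$ (in the spirit of the second line of \eqref{srdtghnf}). Hence $\CoR{t}^{[1,2]}\Rightarrow 0$, and the piece $\CoR{t}^{[1,1]}$ accounts for the entire main term. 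The main obstacle is controlling the remainders uniformly in $t>v$: at each substitution step (Lemmas~\ref{qwerhyjnrt}, \ref{sdfgtrhnj}, \ref{drthyrtgfjhfg}, Riemann-sum approximation), the discarded terms must be bounded by $\underline{O}(\ln(u+cv)/(u+cv)^2)$. These bounds are obtained exactly as in \citeNP{[Malinovskii 2017]}: using the assumed finiteness of $\E(T^4)$ and $\E(Y^4)$ to dominate the tail in $n$, together with Gaussian decay in $\Lambda$ and $\Delta$; the logarithmic factor arises from integrating the resulting majorants against $1/(1+x)^k$ across the full range $0<x<c(t-v)/(u+cv)$ including $t=\infty$.
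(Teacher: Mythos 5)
Your proposal is correct and follows essentially the same route as the paper's own proof: the same split \eqref{2345ty45uj4r} of $\CoR{t}^{[1]}$ into $\CoR{t}^{[1,1]}+\CoR{t}^{[1,2]}$, freezing of the exponential factor via Lemma~\ref{qwerhyjnrt}, passage from the sum over $n$ to the Gaussian integral via Lemma~\ref{qaerfgwehg}, identification of the $x$-integral with $\Int{t}{F}(u,c,v)$ through \eqref{wqergthrt}, the conclusion $\CoR{t}^{[1,1]}\Rightarrow\frac{\E{Y}\E{T}\E(Y^2)}{2cB_1}\Int{t}{F}(u,c,v)$ (which equals the stated constant since $B_1=(\E{Y})^3D^2$) together with $\CoR{t}^{[1,2]}\Rightarrow 0$, and the same deferral of the uniform remainder bounds to \citeNP{[Malinovskii 2017]}. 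Your only deviation---using the exact factorization $n^{-3/2}\Delta_{n}(u+cv,x)=C(x)\,n^{-2}$ in place of the paper's processing of $n^{-1}$ via identity \eqref{srdthrjrt} and of $n^{-1/2}$ via Lemma~\ref{drthyrtgfjhfg}---is a harmless bookkeeping variant yielding the same leading term (the vanishing of $\CoR{t}^{[1,2]}$ rests on the first, not the second, line of \eqref{srdtghnf}, a purely cosmetic slip).
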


\begin{proof}
We start with equation \eqref{2345ty45uj4r}. Using Lemma~\ref{qwerhyjnrt} for
processing of exponential factor, equation \eqref{ftyjhgmh}, identity
\eqref{srdthrjrt} for processing the factor $n^{-1}$, Lemma~\ref{drthyrtgfjhfg}
to switch to the integral sum and Lemma~\ref{qaerfgwehg} to approximate it by
respective integral, and bearing in mind \eqref{wqergthrt}, and arguing the
same as above, we have
\begin{equation*}
\begin{aligned}
\CoR{t}^{[1,1]}(u,c,v)&=K^{[1,1]}\frac{\E{Y}}{(u+cv)}\frac{\sqrt{B_4}}{\sqrt{B_1}}2\pi\int_{0}^{\frac{c(t-v)}{u+cv}}
\frac{1}{(1+x)^2}\Bigg(\dfrac{x-\frac{\E{T}}{\E{Y}}c(1+x)}{\frac{c\sqrt{B_1}}{(\E{Y})^{3/2}}
\sqrt{\frac{1+x}{u+cv}}}\Bigg)
\\
&\times\frac{1}{\sqrt{2\pi}}\exp\Bigg\{-\frac{1}{2}\Bigg(\dfrac{x-\frac{\E{T}}{\E{Y}}c(1+x)}{\frac{c\sqrt{B_1}}{(\E{Y})^{3/2}}
\sqrt{\frac{1+x}{u+cv}}}\Bigg)^2\Bigg\}dx
\\
&=2\pi
K^{[1,1]}\frac{\E{Y}}{(u+cv)}\frac{\sqrt{B_4}}{\sqrt{B_1}}\Int{t}{F}(u,c,v)\eqOK\frac{\E{Y}\E{T}\E({Y^2})}{2cB_1}\Int{t}{F}(u,c,v).
\end{aligned}
\end{equation*}


Similar investigation of $\CoR{t}^{[1,2]}(u,c,v)$ yields the following result.
We have
\begin{equation*}
\begin{aligned}
&\CoR{t}^{[1,1]}(u,c,v)\Rightarrow
\frac{\E{Y}\E{T}\E({Y^2})}{2cB_1}\Int{t}{F}(u,c,v)\eqOK\bigg(\frac{\E{T}\D{Y}}{2cD^2(\E{Y})^2}
+\frac{\E{T}}{2cD^2}\bigg)\Int{t}{F}(u,c,v),
\\[0pt]
&\CoR{t}^{[1,2]}(u,c,v)\Rightarrow 0.
\end{aligned}
\end{equation*}
The proof is complete.
\end{proof}

\subsection{Elaboration of $\CoR{t}^{[2]}(u,c,v)$}\label{ertgrehr}

\begin{lemma}\label{rt56ytrujrt}
We have
\begin{multline*}
\sup_{t>v}\bigg|\,\CoR{t}^{[2]}(u,c,v)-\frac{\E(T-\E{T})^3}{6cD^4\E{Y}}\Int{t}{S}(u,c,v)
\\
-\frac{\E(T-\E{T})^3}{2cD^2\D{T}}\bigg(\dfrac{(\E{T})^2\D{Y}}{D^2(\E{Y})^3}-1\bigg)\Int{t}{F}(u,c,v)\bigg|
=\underline{O}\bigg(\frac{\ln(u+cv)}{(u+cv)^2}\bigg),
\end{multline*}
as $u+cv\to\infty$.
\end{lemma}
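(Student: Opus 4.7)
The plan closely parallels the proofs of Lemmas~\ref{wertgrherw} and~\ref{tyujrtjjk}. I would treat each of the six pieces $\CoR{t}^{[2,1]}(u,c,v),\dots,\CoR{t}^{[2,6]}(u,c,v)$ separately and then sum the surviving contributions. For each piece the workflow is: first, apply Lemma~\ref{qwerhyjnrt} to replace $\exp\{-\tfrac12\Delta^2_n(u+cv,x)\}$ by its $n$-free Gaussian counterpart in $x$, with the correction pieces absorbed into the remainder; second, invoke identity~\eqref{srdthrjrt} together with Lemma~\ref{sdfgtrhnj} to extract the leading factor $\E{Y}/((u+cv)(1+x))$ from $n^{-1}$; third, approximate the resulting sum in $n$ by an integral in $\Lambda_n$ via Lemmas~\ref{drthyrtgfjhfg} and~\ref{qaerfgwehg}, so that each polynomial prefactor in $\Lambda_n$ is integrated against the Gaussian kernel.

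Next I would classify the six summands by parity and order. The pieces $\CoR{t}^{[2,2]}(u,c,v)$, $\CoR{t}^{[2,4]}(u,c,v)$ and $\CoR{t}^{[2,6]}(u,c,v)$, which carry the odd-in-$\Lambda_n$ factors $\Lambda_n$, $\Lambda_n^3$ and $\Lambda_n$ respectively, vanish at leading order after integration against the Gaussian in $\Lambda_n$, exactly as in the first line of~\eqref{srdtghnf}. The surviving contributions are: $\CoR{t}^{[2,1]}(u,c,v)$ with $\Delta_n^3$, which by~\eqref{qwerfgwer} yields $\Int{t}{S}(u,c,v)$; $\CoR{t}^{[2,5]}(u,c,v)$ with $\Delta_n$, which by~\eqref{wqergthrt} yields $\Int{t}{F}(u,c,v)$; and $\CoR{t}^{[2,3]}(u,c,v)$ with $\Delta_n\Lambda_n^2$, whose $\Lambda_n^2$ integrates against $e^{-\Lambda_n^2/2}$ to a finite positive constant while $\Delta_n$ again converts to an $\Int{t}{F}(u,c,v)$ contribution.

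Collecting constants, I would substitute the definitions of $K^{[2]}$, $K^{[2,1]}$, $K^{[2,3]}$, $K^{[2,5]}$, and $B_1,\dots,B_4$, using $\E\tilde{T}^3=\E(T-\E{T})^3/(\D{T})^{3/2}$ and the identity $B_1=D^2(\E{Y})^3$. The $\Int{t}{S}$-coefficient comes entirely from $\CoR{t}^{[2,1]}$ and reduces to $\E(T-\E{T})^3/(6cD^4\E{Y})$. The $\Int{t}{F}$-coefficient is assembled from $\CoR{t}^{[2,3]}$, which supplies the $\tfrac{(\E{T})^2\D{Y}}{D^2(\E{Y})^3}$ term, and $\CoR{t}^{[2,5]}$, which supplies the $-1$; the common prefactor $\E(T-\E{T})^3/(2cD^2\D{T})$ emerges after the algebraic cancellations.

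The main obstacle is bookkeeping rather than new analytic input: six summands each spawn several sub-pieces through the three-step decomposition, and one must verify that every piece not matching the two stated leading contributions is genuinely of order $\underline{O}(\ln(u+cv)/(u+cv)^2)$. The non-uniform residual bounds themselves are not new and follow the template established in \citeNP{[Malinovskii 2017]}, as already flagged in the footnote to Lemma~\ref{wertgrherw}; what is genuinely new here is the algebraic identification of the two explicit constants in the statement, for which a careful computer-algebra check on the prefactors would be prudent.
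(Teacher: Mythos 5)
Your proposal is correct and follows essentially the same route as the paper: the six summands $\CoR{t}^{[2,1]},\dots,\CoR{t}^{[2,6]}$ are handled exactly as in the elaboration of $\CoR{t}^{[1]}$ (Lemma~\ref{qwerhyjnrt} for the exponential factor, identity \eqref{srdthrjrt} for $n^{-1}$, Lemmas~\ref{drthyrtgfjhfg} and~\ref{qaerfgwehg} for the sum-to-integral step), the odd-in-$\Lambda_n$ pieces $\CoR{t}^{[2,2]}$ (whose factor is $\Delta_n^2\Lambda_n$, not just $\Lambda_n$, though the parity argument is unchanged), $\CoR{t}^{[2,4]}$, $\CoR{t}^{[2,6]}$ drop out, and $\CoR{t}^{[2,1]}$, $\CoR{t}^{[2,3]}$, $\CoR{t}^{[2,5]}$ deliver, via \eqref{qwerfgwer} and \eqref{wqergthrt}, precisely the $\Int{t}{S}$ and two $\Int{t}{F}$ contributions with the constants you identify. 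This matches the paper's proof, which likewise defers the residual estimates to the template of \citeNP{[Malinovskii 2017]}.
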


\begin{proof}
Arguing the same as above, we have
\begin{equation*}
\begin{aligned}
&\CoR{t}^{[2,1]}(u,c,v)\Rightarrow
K^{[2,1]}\frac{2\pi\E{Y}}{(u+cv)}\frac{\sqrt{B_4}}{\sqrt{B_1}}
\Int{t}{S}(u,c,v)\eqOK\frac{\E(T-\E{T})^3}{6cD^4\E{Y}}\Int{t}{S}(u,c,v),
\\[0pt]
&\CoR{t}^{[2,2]}(u,c,v)\Rightarrow 0,
\\[0pt]
&\CoR{t}^{[2,3]}(u,c,v)\Rightarrow
3K^{[2,3]}\frac{2\pi\E{Y}}{(u+cv)}\frac{\sqrt{B_4}}{\sqrt{B_1}}
\Int{t}{F}(u,c,v)\eqOK\dfrac{(\E{T})^2\D{Y}\E(T-\E{T})^3}{2
cD^4(\E{Y})^3\D{T}}\Int{t}{F}(u,c,v),
\\[0pt]
&\CoR{t}^{[2,4]}(u,c,v)\Rightarrow 0,
\\[0pt]
&\CoR{t}^{[2,5]}(u,c,v)\Rightarrow
-3K^{[2,5]}\frac{2\pi\E{Y}}{(u+cv)}\frac{\sqrt{B_4}}{\sqrt{B_1}}
\Int{t}{F}(u,c,v)\eqOK-\dfrac{\E(T-\E{T})^3}{2 cD^2\D{T}}\Int{t}{F}(u,c,v),
\\[0pt]
&\CoR{t}^{[2,6]}(u,c,v)\Rightarrow 0.
\end{aligned}
\end{equation*}
It gives the desired result.
\end{proof}

\subsection{Elaboration of $\CoR{t}^{[3]}(u,c,v)$}\label{sdfghjtyk}

\begin{lemma}\label{wrthyjkt}
We have
\begin{multline*}
\sup_{t>v}\bigg|\,\CoR{t}^{[3]}(u,c,v)+\dfrac{(\E{T})^3\E(Y-\E{Y})^3}{6cD^4(\E{Y})^4}\Int{t}{S}(u,c,v)
\\
+\frac{\E{T}\E(Y-\E{Y})^3}{2cD^2\E{Y}\D{Y}}\bigg(\dfrac{\D{T}}{D^2\E{Y}}
-1\bigg)\Int{t}{F}(u,c,v)\bigg|
=\underline{O}\bigg(\frac{\ln(u+cv)}{(u+cv)^2}\bigg),
\end{multline*}
as $u+cv\to\infty$.
\end{lemma}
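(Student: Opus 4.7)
The plan is to mimic, term by term, the argument used in the proof of Lemma~\ref{rt56ytrujrt}, exploiting the complete structural symmetry between the decomposition of $\expaN{t}^{[3]}$ and that of $\expaN{t}^{[2]}$. Indeed, inspection of Section~\ref{rytjukty} shows that $\CoR{t}^{[3]}(u,c,v)=\CoR{t}^{[3,1]}+\dots+\CoR{t}^{[3,6]}$ has exactly the same six integrals as $\CoR{t}^{[2]}$, with the two differences that the overall sign is flipped, and the constants $K^{[3,i]}$ are built from $\E(\tilde{Y}^{3})$, $B_3=\E{T}\D{Y}$ and $\E{Y}$ in place of $\E(\tilde{T}^{3})$, $B_2=\E{Y}\D{T}$ and $\E{T}$. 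Hence the whole machinery carries over and the only task is to collect the right constants at the end.

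The heart of the argument, applied separately to each $\CoR{t}^{[3,i]}$, proceeds in the same four steps as in Section~\ref{ergwerghwerh}. First I would replace $\exp\{-\tfrac12\Delta_n^{2}\}$ via Lemma~\ref{qwerhyjnrt}, keeping terms up to the required order. Second, I would use identity~\eqref{srdthrjrt} and Lemma~\ref{sdfgtrhnj} to rewrite powers of $n^{-1/2}$ in terms of $\Delta_n(u+cv,x)$, $\Lambda_n(u+cv,x)$ and a leading deterministic factor $\sqrt{\E{Y}/((u+cv)(1+x))}$. Third, I would invoke Lemma~\ref{drthyrtgfjhfg} to convert the surviving sums of the form $\sum_n h(\Lambda_n)$ into integral sums $\sum_n h(\Lambda_n)(\Lambda_{n+1}-\Lambda_n)$. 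Fourth, Lemma~\ref{qaerfgwehg} replaces these integral sums by the corresponding Gaussian integrals, which in turn match the definitions of $\Int{t}{F}(u,c,v)$ and $\Int{t}{S}(u,c,v)$ via \eqref{qwerfgwer} and \eqref{wqergthrt}.

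Once this is in place, the parity structure already encountered in Lemma~\ref{rt56ytrujrt} makes the bookkeeping easy: the three terms $\CoR{t}^{[3,2]}$, $\CoR{t}^{[3,4]}$, $\CoR{t}^{[3,6]}$ contain an odd power of $\Lambda_n$ alone (no extra $\Delta_n$ factor) and therefore reduce to sums $\sum_n n^{-1}\Lambda_n\exp\{-\tfrac12\Lambda_n^{2}\}$, which by \eqref{srdtghnf} are of allowed order of smallness and hence $\Rightarrow 0$. The remaining three terms survive, and a direct computation using $K^{[3,1]}$, $K^{[3,3]}$, $K^{[3,5]}$ together with $B_3=\E{T}\D{Y}$ and $B_1=(\E{T})^{2}\D{Y}+(\E{Y})^{2}\D{T}=D^{2}(\E{Y})^{3}$ gives
\begin{equation*}
\CoR{t}^{[3,1]}(u,c,v)\Rightarrow -\dfrac{(\E{T})^3\E(Y-\E{Y})^3}{6cD^4(\E{Y})^4}\,\Int{t}{S}(u,c,v),
\end{equation*}
while $\CoR{t}^{[3,3]}+\CoR{t}^{[3,5]}$ yields precisely the $\Int{t}{F}(u,c,v)$ contribution stated in the lemma. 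Summing these three surviving pieces produces the assertion.

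The main obstacle is purely computational rather than conceptual: it is the careful identification of the surviving leading constants from the chain
$K^{[3,i]}\cdot 2\pi\E{Y}/(u+cv)\cdot\sqrt{B_4}/\sqrt{B_1}$ (and its cubic analogue for the $\Int{t}{S}$ piece) with the expressions involving $\E(Y-\E{Y})^{3}$, $\D{T}$, $\D{Y}$ and $D^{2}$ appearing in the statement. All the residual-term bookkeeping is, as noted in the footnote to Lemma~\ref{wertgrherw} and in \citeNP{[Malinovskii 2017]}, dominated by $\underline{O}(\ln(u+cv)/(u+cv)^{2})$, so the rate in the lemma follows automatically from the previous steps.
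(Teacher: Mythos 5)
Your proposal is correct and follows essentially the same route as the paper, which proves Lemma~\ref{wrthyjkt} by ``arguing the same as above'': each $\CoR{t}^{[3,i]}$ is elaborated with Lemmas~\ref{qwerhyjnrt}, \ref{sdfgtrhnj}, \ref{drthyrtgfjhfg}, \ref{qaerfgwehg} together with \eqref{srdtghnf}, \eqref{qwerfgwer}, \eqref{wqergthrt}, the terms with an odd power of $\Lambda_{n}$ (i.e. $\CoR{t}^{[3,2]}$, $\CoR{t}^{[3,4]}$, $\CoR{t}^{[3,6]}$) reduce to $0$, and the constants $K^{[3,1]}$, $K^{[3,3]}$, $K^{[3,5]}$ combine via $B_1=D^2(\E{Y})^3$ into the stated coefficients of $\Int{t}{S}$ and $\Int{t}{F}$. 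Only a minor wording slip: $\CoR{t}^{[3,2]}$ does carry a factor $\Delta_{n}^{2}\Lambda_{n}$ rather than $\Lambda_{n}$ alone, but this does not affect the vanishing conclusion.
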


\begin{proof}
Arguing the same as above, we have
\begin{equation*}
\begin{aligned}
&\CoR{t}^{[3,1]}(u,c,v)\Rightarrow
-K^{[3,1]}\frac{2\pi\E{Y}}{(u+cv)}\frac{\sqrt{B_4}}{\sqrt{B_1}}\Int{t}{S}(u,c,v)
\eqOK-\dfrac{(\E{T})^3\E(Y-\E{Y})^3}{6cD^4(\E{Y})^4}\Int{t}{S}(u,c,v),
\\[0pt]
&\CoR{t}^{[3,2]}(u,c,v)\Rightarrow 0,
\\[0pt]
&\CoR{t}^{[3,3]}(u,c,v)\Rightarrow
-3K^{[3,3]}\frac{2\pi\E{Y}}{(u+cv)}\frac{\sqrt{B_4}}{\sqrt{B_1}}\Int{t}{F}(u,c,v)
\eqOK-\dfrac{\E{T}\D{T}\E(Y-\E{Y})^3}{2cD^4\D{Y}(\E{Y})^2}\Int{t}{F}(u,c,v),
\\[0pt]
&\CoR{t}^{[3,4]}(u,c,v)\Rightarrow 0,
\\[0pt]
&\CoR{t}^{[3,5]}(u,c,v)\Rightarrow
3K^{[3,5]}\frac{2\pi\E{Y}}{(u+cv)}\frac{\sqrt{B_4}}{\sqrt{B_1}}
\Int{t}{F}(u,c,v)\eqOK\dfrac{\E{T}\E(Y-\E{Y})^3}{2cD^2\E{Y}\D{Y}}\Int{t}{F}(u,c,v),
\\[0pt]
&\CoR{t}^{[3,6]}(u,c,v)\Rightarrow 0.
\end{aligned}
\end{equation*}
It gives the desired result.
\end{proof}

The proof of Theorem~\ref{srdthjrf} follows from collecting the results of
Lemmas~\ref{wertgrherw}--\ref{wrthyjkt}, and is complete.

\begin{proof}[Proof of Theorem~\ref{dthyjrt}]
For $T$ exponential with parameter $\paramT$, we have
\begin{equation}\label{sadfgrfhber}
f_{T}^{*n}(z-v)=\paramT\frac{(\paramT (z-v))^{n-1}}{(n-1)!}e^{-\paramT
(z-v)},\quad n=1,2,\dots.
\end{equation}
For $Y$ exponential with parameter $\paramY$, we have
\begin{equation}\label{saerfgewgh}
\P\big\{M(u+cz)=n\big\}=\frac{(\paramY(u+cz))^n}{n!}e^{-\paramY(u+cz)},\quad
n=1,2,\dots.
\end{equation}

Bearing in mind that modified Bessel function of the first kind of order $1$ is
\begin{equation}\label{APPEND=AFormula=10}
\BesselI{1}(z)=\sum_{k=0}^{\infty}\frac{1}{k!\,(k+1)!}\left(\frac{z}{2}\right)^{2k+1}
=\sum_{n=1}^{\infty}\frac{1}{n!\,(n-1)!}\left(\frac{z}{2}\right)^{2n-1},
\end{equation}
we put \eqref{sadfgrfhber} and \eqref{saerfgewgh} in \eqref{qwretgjhmg}. We
have
\begin{multline*}
\int_{v}^{t}\frac{u+cv}{u+cz}e^{-\paramY(u+cz)}\sum_{n=1}^{\infty}\frac{(\paramY(u+cz))^n}{n!}
\paramT\frac{(\paramT (z-v))^{n-1}}{(n-1)!}e^{-\paramT (z-v)}dz
\\
=\paramY\paramT\int_{v}^{t}(u+cv)
\sum_{n=1}^{\infty}\frac{\paramY^{n-1}\paramT^{n-1}(u+cz)^{n-1}(z-v)^{n-1}}{n!(n-1)!}e^{-\paramY(u+cz)}e^{-\paramT
(z-v)}dz
\\
=\sqrt{\paramY\paramT c}\,(v+u/c)e^{-\paramY u}e^{-\paramY cv}
\\
\times\int_{0}^{t-v}\frac{\BesselI{1}(2\sqrt{\paramY\paramT
c(y+v+u/c)y})}{\sqrt{(y+v+u/c)y}} e^{-(\paramY c+\paramT)y}dy,
\end{multline*}
as required. In the last equation we made the change of variables: $z-v=y$.
\end{proof}

\section{Main technicalities and auxiliary results}\label{wqedtfyjhtgfj}

\subsection{Non-uniform Berry-Esseen bounds in local CLT}\label{srdthrj}

Let the random vectors $\xi_{i}$, $i=1,2,\dots$, assuming values in $\R^{m}$ be
i.i.d. with c.d.f. $P$, with zero mean and with identity covariance matrix $I$.
Put $S_n=\frac{1}{\sqrt{n}}\sum_{i=1}^{n}\xi_{i}$, $\P_{n}(A)=\P\{S_{n}\in
A\}$, $A\subset\R^{m}$, $\p_n(x)=\frac{\partial^m}{\partial x_1,\dots,\partial
x_m}\P\{S_n\leqslant x\}$, $x=(x_1,\dots,x_m)\in\R^{m}$.

The Berry-Esseen bounds in one-dimensional, as $m=1$, central limit theorem
(CLT) are well known. The following theorem follows from Theorem~11 in \S~2 of
\citeNP{[Petrov 1975]} proved for non-identically distributed random variables
$\xi_{i}$, $i=1,2,\dots$.
\begin{theorem}[\citeNP{[Petrov 1975]}]\label{qwretyuhjrk}
Let $\E\xi_1^2>0$, $\E|\xi_1|^3<\infty$, and
$\int_{|t|>\epsilon}|\E{e^{it\xi_1}}|^ndt=\underline{O}(n^{-1})$ for any fixed
$\epsilon>0$. Then for all sufficiently large $n$ a bounded p.d.f. $\p_n(x)$
exists and
\begin{equation*}
\sup_{x\in\R}\left|\,\p_n(x)-\Ugauss{0}{1}(x)\right|=\underline{O}(n^{-1/2}),\quad
n\to\infty.
\end{equation*}
\end{theorem}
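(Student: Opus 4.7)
The plan is to establish this local central limit theorem by Fourier inversion, splitting the inverse transform into a central region where the third-moment Taylor expansion of the characteristic function is effective, and a tail region where the hypothesis $\int_{|t|>\epsilon}|\E e^{it\xi_1}|^n dt=\underline{O}(n^{-1})$ is used directly. Write $\phi(t)=\E e^{it\xi_1}$, so that $S_n$ has characteristic function $\phi_n(t)=\phi(t/\sqrt n)^n$. The integrability hypothesis guarantees that for all sufficiently large $n$ the function $\phi_n$ is absolutely integrable on $\R$, so by Fourier inversion $S_n$ has a bounded continuous density
\begin{equation*}
\p_n(x)=\frac{1}{2\pi}\int_{-\infty}^{\infty}e^{-itx}\phi(t/\sqrt n)^n\,dt,
\end{equation*}
and subtracting the analogous representation of $\Ugauss{0}{1}$ gives
\begin{equation*}
\p_n(x)-\Ugauss{0}{1}(x)=\frac{1}{2\pi}\int_{-\infty}^{\infty}e^{-itx}\bigl[\phi(t/\sqrt n)^n-e^{-t^2/2}\bigr]\,dt.
\end{equation*}
Thus it suffices to bound the $L^1$-norm of $\phi_n(t)-e^{-t^2/2}$ by $\underline{O}(n^{-1/2})$, uniformly in $x$.

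Choose $A>0$ small enough that on $|s|\leqslant A$ the Taylor expansion $\phi(s)=1-s^2/2+r(s)$, with $|r(s)|\leqslant C|s|^3$, yields $|\phi(s)|\leqslant 1-s^2/4\leqslant e^{-s^2/4}$. In the central region $|t|\leqslant A\sqrt n$, the substitution $s=t/\sqrt n$ and the bound $|a^n-b^n|\leqslant n\max(|a|,|b|)^{n-1}|a-b|$ applied to $a=\phi(t/\sqrt n)$ and $b=e^{-t^2/(2n)}$, together with the exponential envelope $e^{-t^2/4}$, give a pointwise estimate of the form
\begin{equation*}
\bigl|\phi(t/\sqrt n)^n-e^{-t^2/2}\bigr|\leqslant C\,\frac{|t|^3}{\sqrt n}\,e^{-t^2/4},
\end{equation*}
whose integral over $|t|\leqslant A\sqrt n$ is $\underline{O}(n^{-1/2})$.

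In the tail region $|t|>A\sqrt n$, the substitution $s=t/\sqrt n$ converts the integral of $|\phi(t/\sqrt n)|^n$ into
\begin{equation*}
\int_{|t|>A\sqrt n}\bigl|\phi(t/\sqrt n)\bigr|^n\,dt=\sqrt n\int_{|s|>A}|\phi(s)|^n\,ds=\sqrt n\cdot\underline{O}(n^{-1})=\underline{O}(n^{-1/2}),
\end{equation*}
where the hypothesis is applied with $\epsilon=A$. The Gaussian piece $\int_{|t|>A\sqrt n}e^{-t^2/2}\,dt$ decays faster than any polynomial. Summing the central and tail contributions completes the argument.

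The step requiring the most care is the central-region estimate, because the Taylor remainder in $\phi$ must be propagated through the $n$th power without losing the exponential factor $e^{-t^2/4}$; this is the standard device behind Berry--Esseen style local bounds, but its interaction with the non-uniform remainder (when one seeks sharper polynomial decay in $x$) is typically the delicate point. Here, since only uniform $\underline{O}(n^{-1/2})$ is claimed, the argument reduces to clean $L^1$-bounds, and the hypothesis on $\int|\phi|^n$ handles the tail in one line, avoiding the usual refinements based on Cram\'er's condition $\limsup_{|t|\to\infty}|\phi(t)|<1$.
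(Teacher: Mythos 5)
Your argument is correct, and it is essentially the classical characteristic-function proof of the local CLT with rate $\underline{O}(n^{-1/2})$: Fourier inversion (legitimate because the hypothesis $\int_{|t|>\epsilon}|\E e^{it\xi_1}|^n dt=\underline{O}(n^{-1})$ makes $\phi(t/\sqrt n)^n$ integrable for large $n$, which also yields existence and boundedness of $\p_n$), reduction to an $L^1$ bound on $\phi(t/\sqrt n)^n-e^{-t^2/2}$, a third-moment Taylor estimate with the exponential envelope on $|t|\leqslant A\sqrt n$, and the integrability hypothesis (applied with $\epsilon=A$, after the substitution $s=t/\sqrt n$ produces the factor $\sqrt n$) on the tail. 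The comparison point is that the paper does not prove this statement at all: it is quoted as an auxiliary result, with the remark that it follows from Theorem~11 in \S 2 of Petrov (1975), which is proved there (for not necessarily identically distributed summands) by precisely this Fourier-analytic scheme; so your proposal in effect supplies, in the i.i.d. special case, the argument the paper delegates to the reference. Two cosmetic remarks: in the central region the inequality $|a^n-b^n|\leqslant n\max(|a|,|b|)^{n-1}|a-b|$ with $|a|,|b|\leqslant e^{-t^2/(4n)}$ gives the envelope $e^{-(n-1)t^2/(4n)}$, i.e.\ $e^{-t^2/8}$ for $n\geqslant 2$ rather than $e^{-t^2/4}$ (any fixed positive constant in the exponent suffices, so nothing is lost); and the bound $|\phi(s)-e^{-s^2/2}|\leqslant C|s|^3$ on $|s|\leqslant A$ should be stated as combining the moment expansion of $\phi$ with the Taylor remainder of $e^{-s^2/2}$, whose $s^4$ term is absorbed into $C|s|^3$ on the bounded interval. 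Note also that the comparison with $\Ugauss{0}{1}$ tacitly uses the standing normalization of Section~5.1 ($\E\xi_1=0$, $\E\xi_1^2=1$), which you assumed implicitly and should state.
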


The non-uniform Berry-Esseen bounds in integral rather than local
one-dimensional CLT
may be found in \citeNP{[Petrov 1995]} (see, e.g., Theorems 15 and 14 in Ch. 5,
\S~6 in \citeNP{[Petrov 1995]}).

A detailed study of normal approximations and asymptotic expansions in the CLT
in $\R^{m}$, as $m>1$, is conducted in \citeNP{[Bhattacharya Rao 1976]} (see
particularly Theorem 19.2 in \citeNP{[Bhattacharya Rao 1976]}. The non-uniform
Berry-Esseen bounds in $\R^{m}$, $m>1$, that is used in
Section~\ref{sdfyguklyi} as auxiliary result, is Theorem~4 in \S~3 of
\citeNP{[Dubinskaite 1982]} with $k=m$ and $s=2$. We first formulate the
following conditions.

\emph{Condition} ($P_m$): there exists $N\geqslant 1$ such that
$\sup_{x\in\R^m}p_N(x)\leqslant C<\infty$ and
\begin{equation*}
\int_{\|x\|>\sqrt{n}}\|x\|^{2}P(dx)+\frac{1}{n}\int_{\|x\|\leqslant\sqrt{n}}\|x\|^{4}P(dx)
+\frac{1}{\sqrt{n}}\sup_{\|e\|=1}\Big|\int_{\|x\|\leqslant\sqrt{n}}(x,e)^{3}P(dx)\Big|
=\underline{O}(\epsilon_n),
\end{equation*}
$n\to\infty$, where $\epsilon_n$ is a sequence of positive numbers such that
$\epsilon_n\to 0$, as $n\to\infty$, and $\epsilon_n\geqslant 1/\sqrt{n}$.

\emph{Condition} ($A_2$): $\beta_{2}=\E\|\xi_1\|^2<\infty$,
$\alpha_1(t)=\E(\xi_1,t)<\infty$.

\begin{theorem}[\citeNP{[Dubinskaite 1982]}]\label{w4e5y46yu43}
To have
\begin{equation}\label{ewyjhfyhm}
\sup_{x\in\R^{m}}(1+\|x\|)^3\left|\,\p_n(x)-\Ugauss{0}{I}(x)\right|=\underline{O}(n^{-1/2}),\quad
n\to\infty,
\end{equation}
it is necessary and sufficient that conditions $(P_m)$, $(A_2)$, and
\begin{equation*}
z\int_{\|x\|>z}\|x\|^{2}P(dx)+\sup_{\|e\|=1}\Big|\int_{\|x\|\leqslant
z}(x,e)^{3}P(dx)\Big|=\underline{O}(1),\quad z\to\infty,
\end{equation*}
be satisfied.
\end{theorem}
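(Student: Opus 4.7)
The plan is to establish both directions of this equivalence via Fourier-inversion analysis, since both the density $\p_n$ and its Gaussian limit are characterised by their characteristic functions. Writing $\hat{p}(t)=\E e^{i(t,\xi_1)}$ for the characteristic function of $\xi_1$, Fourier inversion gives
$$
\p_n(x)-\Ugauss{0}{I}(x)=(2\pi)^{-m}\int_{\R^m}e^{-i(t,x)}\Bigl(\hat{p}(t/\sqrt{n})^n-e^{-|t|^2/2}\Bigr)\,dt.
$$
To produce the non-uniform factor $(1+\|x\|)^3$, I would multiply by monomials $x^\alpha$ with $|\alpha|\leqslant 3$ and use the identity $x^\alpha e^{-i(t,x)}=i^{|\alpha|}\partial_t^\alpha e^{-i(t,x)}$, then integrate by parts. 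The problem then reduces to bounding, in $L^1(dt)$, every mixed partial derivative up to order three of $\hat{p}(t/\sqrt{n})^n-e^{-|t|^2/2}$ uniformly at the rate $n^{-1/2}$.

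For the sufficiency direction I would split the $t$-integral into a bulk region $|t|\leqslant\epsilon\sqrt{n}$ and a complementary tail region. On the bulk I would Taylor-expand $\log\hat{p}(t/\sqrt{n})$ to third order: condition $(A_2)$ controls the quadratic term, while the truncated third-moment condition in the statement is precisely what is required to dominate the cubic remainder, uniformly in $n$, after rescaling by $\sqrt{n}$ and accounting for the three derivatives falling on it. On the tail, condition $(P_m)$ supplies both the sub-multiplicative bound $\sup_{|t|\geqslant\epsilon}|\hat{p}(t)|<1$ and the integrability of $\hat{p}^N$; raising to the power $n$ produces exponential decay in $n$ that absorbs the three derivatives and leaves a negligible contribution. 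Condition $(P_m)$ also guarantees that $\p_n$ is smooth and bounded for large $n$, legitimising the differentiations under the integral sign.

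For necessity I would read the stated bound backwards. Integrating $(1+\|x\|)^2|\p_n(x)-\Ugauss{0}{I}(x)|$ against suitable test functions of compact support forces $(A_2)$; testing against indicators $\mathbf{1}_{\|x\|\leqslant z}$ and against odd trilinear cut-offs $(x,e)^3\mathbf{1}_{\|x\|\leqslant z}$, together with the tail weight $(1+\|x\|)^3$, extracts the two summands in the truncated third-moment condition, while $(P_m)$ itself follows at once from \eqref{ewyjhfyhm} combined with boundedness of $\Ugauss{0}{I}$. The main obstacle will be the necessity direction: matching exactly the exponents ($3$ in the weight, $2$ in the tail integral, $3$ in the truncated trilinear form) requires a delicate selection of test functions, because the two summands in the condition are coupled through the very bulk-tail decomposition used in sufficiency, and one must separate their contributions cleanly in order to recover each as a sharp necessary condition rather than a combined one.
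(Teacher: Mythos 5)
This statement is not proved in the paper at all: it is quoted verbatim as an external auxiliary result, namely Theorem~4 in \S~3 of Dubinskaite (1982) with $k=m$ and $s=2$, so there is no in-paper proof to compare your attempt against. Judged on its own terms, what you have written is a plan rather than a proof. The Fourier-inversion strategy (multiply by $x^{\alpha}$, $|\alpha|\leqslant 3$, integrate by parts, bound derivatives of $\hat{p}(t/\sqrt{n})^{n}-e^{-\|t\|^{2}/2}$ in $L^{1}$, split into bulk and tail) is indeed the standard route for non-uniform local limit theorems of this type, so the sufficiency half is headed in a reasonable direction. But even there you under-use condition $(P_m)$: it is not merely the smoothing/boundedness hypothesis plus the Cram\'er-type tail bound on the characteristic function --- it also packages the truncated fourth-moment and truncated third-moment terms at the rate $\underline{O}(\epsilon_n)$, and those are exactly what controls the cubic and quartic remainders in the bulk Taylor expansion. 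Your sketch attributes that control to the displayed third-moment condition alone, which is the $z\to\infty$ condition and by itself does not give the $n$-dependent rate.

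The more serious gap is the necessity direction. The theorem is an equivalence, and you concede that you do not know how to carry out the necessity argument, describing it as ``the main obstacle'' and noting that the two summands of the truncated third-moment condition are coupled in a way you cannot yet disentangle. A proof proposal that leaves one entire implication of an if-and-only-if statement at the level of ``this will be delicate'' is not a proof of the statement. Since this is a deep published result whose proof occupies a substantial part of Dubinskaite's paper, the honest conclusion is that your proposal identifies the right toolbox but does not establish the theorem; for the purposes of the present paper the correct move is simply to cite Dubinskaite (1982), which is what the author does.
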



Under similar conditions, with modified ($P_m$), asymptotical expansions in
Theorem~\ref{w4e5y46yu43} is (see \citeNP{[Dubinskaite 1982]})
\begin{equation*}
\sup_{x\in\R^{m}}(1+\|x\|)^4\Big|\,\p_n(x)-\Big(\Ugauss{0}{I}(x)+n^{-1/2}
P_1(-\Ugauss{0}{I}:\{\chi_{\nu}\})(x)\Big)\Big|=\underline{O}(n^{-1}),\quad
n\to\infty,
\end{equation*}
where (see Equation (7.20) in \citeNP{[Bhattacharya Rao 1976]})
\begin{multline*}
P_1(-\Ugauss{0}{I}:\{\chi_{\nu}\})(x)=\Big\{-\frac{1}{6}\Big[\chi_{(3,0,\dots,0)}(-x_1^3+3x_1)+\dots
+\chi_{(0,0,\dots,3)}(-x_m^3+3x_m)\Big]
\\
-\frac{1}{2}\Big[\chi_{(2,1,0,\dots,0)}(-x_{1}^2x_{2}+x_{2})+\dots
+\chi_{(0,\dots,0,1,2)}(-x_{m}^2x_{m-1}+3x_{m-1})\Big]
\\
-\Big[\chi_{(1,1,1,0,\dots,0)}(-x_{1}x_{2}x_{3}+x_{2})+\dots
+\chi_{(0,\dots,0,0,1,1)}(-x_{m}x_{m-1}x_{m-2})\Big]\Big\}\Ugauss{0}{I}(x)
\end{multline*}
for $x=(x_1,x_2,\dots,x_m)\in\R^{m}$. In the particular case when
$\chi_{(2,1,0,\dots,0)}=\dots=\chi_{(0,\dots,0,1,2)}=0$ and
$\chi_{(1,1,1,0,\dots,0)}=\dots=\chi_{(0,\dots,0,0,1,1)}=0$, we have
\begin{multline*}
P_1(-\Ugauss{0}{I}:\{\chi_{\nu}\})(x)=-\frac{1}{6}\Big[\chi_{(3,0,\dots,0)}(-x_1^3+3x_1)+\dots
+\chi_{(0,0,\dots,3)}(-x_m^3+3x_m)\Big]\Ugauss{0}{I}(x).
\end{multline*}
In the case $m=2$, for $x=(x_1,x_2)$ we have
\begin{multline*}
P_1(-\Ugauss{0}{I}:\{\chi_{\nu}\})(x)=\frac{1}{6}\Big[\chi_{(3,0)}(x_1^3-3x_1)+
\chi_{(0,3)}(x_2^3-3x_2)\Big]\Ugauss{0}{1}(x_1)\Ugauss{0}{1}(x_2).
\end{multline*}

\subsection{Fundamental identities}\label{werwhteyjn}

These identities were established and used in \citeNP{[Malinovskii 2017]}. For
$B_1=(\E{T})^2\D{Y}+(\E{Y})^2\D{T}$, $B_2=\E{Y}\D{T}$, $B_3=\E{T}\D{Y}$, and
$B_4=\D{Y}\D{T}$, we use notation
\begin{equation}\label{wqerertjr4}
\begin{aligned}
\mathcal{Y}_{n}(\OneVar)&=\frac{\OneVar-n\E{Y}}{\sqrt{n\D{Y}}},&\mathcal{T}_{n}(\TwoVar)&=\frac{\TwoVar-n\E{T}}{\sqrt{n\D{T}}},
\\
\Delta_{n}(\OneVar,\TwoVar)&=\dfrac{\TwoVar\E{Y}-\OneVar\E{T}}{\sqrt{B_1n}},&
\Lambda_{n}(\OneVar,\TwoVar)&=\dfrac{B_1n-(B_2\OneVar+B_3\TwoVar)}{\sqrt{B_1B_4n}}.
\end{aligned}
\end{equation}

\begin{lemma}\label{erwtjytk}
We have the identity
\begin{equation*}
\mathcal{Y}^2_{n}(\OneVar)+\mathcal{T}^2_{n}(\TwoVar)=\Delta^2_{n}(\OneVar,\TwoVar)+\Lambda_n^2(\OneVar,\TwoVar).
\end{equation*}
\end{lemma}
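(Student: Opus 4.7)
The identity is purely algebraic, involving four quadratic forms in the variables $\OneVar$ and $\TwoVar$, so the plan is to clear denominators on each side and match monomials.

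First I would bring the left-hand side to the common denominator $n\D{Y}\D{T} = nB_4$:
\begin{equation*}
\mathcal{Y}^2_{n}(\OneVar)+\mathcal{T}^2_{n}(\TwoVar)
= \frac{\D{T}(\OneVar-n\E{Y})^2 + \D{Y}(\TwoVar-n\E{T})^2}{nB_4}.
\end{equation*}
Expanding and collecting coefficients, the quadratic terms give $\D{T}\OneVar^2 + \D{Y}\TwoVar^2$, the linear-in-$n$ terms give $-2n(\E{Y}\D{T}\OneVar + \E{T}\D{Y}\TwoVar) = -2n(B_2\OneVar + B_3\TwoVar)$, and the constant-in-$(\OneVar,\TwoVar)$ term gives $n^2((\E{Y})^2\D{T} + (\E{T})^2\D{Y}) = n^2 B_1$. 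Thus the numerator of the LHS is $\D{T}\OneVar^2 + \D{Y}\TwoVar^2 - 2n(B_2\OneVar + B_3\TwoVar) + n^2 B_1$.

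Next I would bring the right-hand side to the common denominator $nB_1B_4$:
\begin{equation*}
\Delta^2_{n}(\OneVar,\TwoVar) + \Lambda^2_{n}(\OneVar,\TwoVar)
= \frac{B_4(\TwoVar\E{Y}-\OneVar\E{T})^2 + (B_1n-(B_2\OneVar+B_3\TwoVar))^2}{nB_1B_4}.
\end{equation*}
The key subcomputation is showing that $B_4(\TwoVar\E{Y}-\OneVar\E{T})^2 + (B_2\OneVar+B_3\TwoVar)^2 = B_1(\D{T}\OneVar^2 + \D{Y}\TwoVar^2)$. This reduces to checking three coefficients: the $\OneVar^2$ coefficient is $\D{Y}\D{T}(\E{T})^2 + (\E{Y})^2(\D{T})^2 = \D{T}\bigl((\E{T})^2\D{Y} + (\E{Y})^2\D{T}\bigr) = \D{T}B_1$; symmetrically the $\TwoVar^2$ coefficient equals $\D{Y}B_1$; and the cross term $\OneVar\TwoVar$ produces $-2\D{Y}\D{T}\E{Y}\E{T} + 2\E{Y}\D{T}\E{T}\D{Y} = 0$. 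The remaining terms from expanding $(B_1n-(B_2\OneVar+B_3\TwoVar))^2$ contribute $B_1^2n^2 - 2B_1n(B_2\OneVar+B_3\TwoVar)$, so the numerator factors as $B_1$ times $\D{T}\OneVar^2 + \D{Y}\TwoVar^2 + B_1n^2 - 2n(B_2\OneVar+B_3\TwoVar)$, and after cancelling the factor $B_1$ with the denominator we obtain exactly the LHS expression.

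There is no real obstacle here, only careful bookkeeping; the non-obvious piece is the vanishing of the cross term $\OneVar\TwoVar$ in $B_4(\TwoVar\E{Y}-\OneVar\E{T})^2 + (B_2\OneVar+B_3\TwoVar)^2$, which is precisely the algebraic reason that the orthogonal change of coordinates from $(\mathcal{Y}_n,\mathcal{T}_n)$ to $(\Delta_n,\Lambda_n)$ preserves the sum of squares. Conceptually, one could also present the identity as the statement that the linear map sending $(\OneVar-n\E{Y},\TwoVar-n\E{T})/\sqrt{n}$ to $(\Delta_n,\Lambda_n)\sqrt{B_4/B_1}$ scaled appropriately is an isometry of $\mathbb{R}^2$, but for the present lemma the direct verification above is the shortest route.
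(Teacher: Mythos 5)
Your verification is correct, and it is essentially the paper's own argument: the paper disposes of this identity by noting it follows from completing the square or from a straightforward algebraic check, which is exactly the coefficient-matching computation you carry out (including the crucial vanishing of the $\OneVar\TwoVar$ cross term). Nothing further is needed.
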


\begin{proof}
Getting of this identity is based on algebraic manipulations with the left-hand
side, aimed at completing the square. Its proof may be done as well by means of
a straightforward check.
\end{proof}

\begin{lemma}\label{asdffgfh}
We have the identity
\begin{equation*}
\Lambda_{n+1}(\OneVar,\TwoVar)-\Lambda_{n}(\OneVar,\TwoVar)=\left(\frac{B_1}{B_4n}\right)^{1/2}
+\Lambda_{n+1}(\OneVar,\TwoVar)\big(1-\sqrt{1+1/n}\big).
\end{equation*}
\end{lemma}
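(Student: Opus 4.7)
The plan is to verify this identity by direct algebraic manipulation, substituting the definition of $\Lambda_n(\OneVar,\TwoVar)$ from \eqref{wqerertjr4}. No probabilistic input is needed; everything reduces to bookkeeping of square roots.

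Writing $A=B_2\OneVar+B_3\TwoVar$ and $C=\sqrt{B_1B_4}$ for brevity, the definition reads
\begin{equation*}
\Lambda_{n}(\OneVar,\TwoVar)=\frac{B_1n-A}{C\sqrt{n}},\qquad
\Lambda_{n+1}(\OneVar,\TwoVar)=\frac{B_1(n+1)-A}{C\sqrt{n+1}}.
\end{equation*}
The key observation is that $\sqrt{1+1/n}=\sqrt{(n+1)/n}$, so multiplying $\Lambda_{n+1}$ by this factor aligns the denominators: namely
\begin{equation*}
\Lambda_{n+1}(\OneVar,\TwoVar)\sqrt{1+\tfrac{1}{n}}
=\frac{B_1(n+1)-A}{C\sqrt{n+1}}\cdot\frac{\sqrt{n+1}}{\sqrt{n}}
=\frac{B_1n-A}{C\sqrt{n}}+\frac{B_1}{C\sqrt{n}}
=\Lambda_{n}(\OneVar,\TwoVar)+\sqrt{\frac{B_1}{B_4 n}},
\end{equation*}
where in the last step I used $B_1/C=B_1/\sqrt{B_1B_4}=\sqrt{B_1/B_4}$.

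Rearranging the displayed equality gives
\begin{equation*}
\Lambda_{n+1}(\OneVar,\TwoVar)-\Lambda_{n}(\OneVar,\TwoVar)
=\sqrt{\frac{B_1}{B_4 n}}+\Lambda_{n+1}(\OneVar,\TwoVar)\bigl(1-\sqrt{1+\tfrac{1}{n}}\bigr),
\end{equation*}
which is the claimed identity. There is essentially no obstacle here: the only point that requires a moment of attention is recognizing the rewriting $\sqrt{1+1/n}=\sqrt{(n+1)/n}$, after which the simplification of the denominator $\sqrt{n+1}$ is automatic and the numerator splits cleanly into the piece recovering $\Lambda_n$ and the extra term $B_1/(C\sqrt{n})=\sqrt{B_1/(B_4 n)}$.
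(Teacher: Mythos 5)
Your verification is correct: substituting the definition $\Lambda_{n}(\OneVar,\TwoVar)=\big(B_1n-(B_2\OneVar+B_3\TwoVar)\big)/\sqrt{B_1B_4n}$ and using $\sqrt{1+1/n}=\sqrt{n+1}/\sqrt{n}$ gives exactly $\Lambda_{n+1}\sqrt{1+1/n}=\Lambda_{n}+\sqrt{B_1/(B_4n)}$, which rearranges to the claimed identity. This is the same straightforward algebraic check the paper intends (it states the lemma without proof, treating it as an easily verified identity), so there is nothing to add.
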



\begin{lemma}\label{t7i7ityo7oi}
We have the identities
\begin{equation*}
1-\frac{\OneVar}{n\E{Y}}=\frac{\sqrt{B_4}}{\sqrt{B_1n}}\Lambda_{n}(\OneVar,\TwoVar)
+\frac{B_3}{\E{Y}\sqrt{B_1n}}\Delta_{n}(\OneVar,\TwoVar)
\end{equation*}
and
\begin{equation*}
1-\sqrt{\frac{\OneVar}{n\E{Y}}}=\bigg\{\frac{\sqrt{B_4}}{\sqrt{B_1n}}\Lambda_{n}(\OneVar,\TwoVar)
+\frac{B_3}{\E{Y}\sqrt{B_1n}}\Delta_{n}(\OneVar,\TwoVar)\bigg\}\bigg(1+\sqrt{\frac{\OneVar}{n\E{Y}}}\bigg).
\end{equation*}
\end{lemma}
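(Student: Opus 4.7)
The plan is a direct verification: the first identity is an algebraic rearrangement built around the single key relation $B_{2}\E{Y}+B_{3}\E{T}=B_{1}$, and the second identity is obtained from the first by multiplying through by the conjugate $1+\sqrt{\OneVar/(n\E{Y})}$.

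First I would substitute the definitions \eqref{wqerertjr4} into the right-hand side of the first identity and simplify. Writing out the right-hand side gives
\begin{equation*}
\frac{\sqrt{B_{4}}}{\sqrt{B_{1}n}}\cdot\frac{B_{1}n-(B_{2}\OneVar+B_{3}\TwoVar)}{\sqrt{B_{1}B_{4}n}}
+\frac{B_{3}}{\E{Y}\sqrt{B_{1}n}}\cdot\frac{\TwoVar\E{Y}-\OneVar\E{T}}{\sqrt{B_{1}n}}
=1-\frac{B_{2}\OneVar+B_{3}\TwoVar}{B_{1}n}+\frac{B_{3}(\TwoVar\E{Y}-\OneVar\E{T})}{\E{Y}B_{1}n}.
\end{equation*}
The two terms proportional to $\TwoVar$ cancel exactly, and what remains involves only $\OneVar$:
\begin{equation*}
1-\frac{\OneVar}{B_{1}n\E{Y}}\bigl(B_{2}\E{Y}+B_{3}\E{T}\bigr).
\end{equation*}
Using the definitions $B_{2}=\E{Y}\D{T}$ and $B_{3}=\E{T}\D{Y}$, we compute $B_{2}\E{Y}+B_{3}\E{T}=(\E{Y})^{2}\D{T}+(\E{T})^{2}\D{Y}=B_{1}$, so the bracketed factor is exactly $B_{1}$ and the expression collapses to $1-\OneVar/(n\E{Y})$, yielding the first identity.

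For the second identity, I would use the elementary factorisation $1-a=(1-\sqrt{a})(1+\sqrt{a})$ with $a=\OneVar/(n\E{Y})$. Applying this to the left-hand side of the first identity and then dividing by $1+\sqrt{\OneVar/(n\E{Y})}$ produces
\begin{equation*}
1-\sqrt{\frac{\OneVar}{n\E{Y}}}=\frac{1}{1+\sqrt{\OneVar/(n\E{Y})}}\bigg\{\frac{\sqrt{B_{4}}}{\sqrt{B_{1}n}}\Lambda_{n}(\OneVar,\TwoVar)
+\frac{B_{3}}{\E{Y}\sqrt{B_{1}n}}\Delta_{n}(\OneVar,\TwoVar)\bigg\},
\end{equation*}
which is exactly the second claim after multiplying both sides by $1+\sqrt{\OneVar/(n\E{Y})}$. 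There is no real obstacle here: the only substantive step is the recognition $B_{2}\E{Y}+B_{3}\E{T}=B_{1}$, which follows immediately from the definitions of $B_{1},B_{2},B_{3}$ and is precisely the reason the constants are packaged this way; everything else is bookkeeping.
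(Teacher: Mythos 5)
Your verification of the first identity is correct, and it is exactly the straightforward substitution the paper has in mind: the lemma is stated without proof (only the remark pointing to Malinovskii (1993)), and the whole content is the cancellation of the $\TwoVar$-terms together with $B_2\E{Y}+B_3\E{T}=(\E{Y})^2\D{T}+(\E{T})^2\D{Y}=B_1$, which you identify correctly.

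The second identity is where you need to be more careful. As printed, the identity cannot hold: writing $a=\OneVar/(n\E{Y})$ and $S=\frac{\sqrt{B_4}}{\sqrt{B_1n}}\Lambda_{n}(\OneVar,\TwoVar)+\frac{B_3}{\E{Y}\sqrt{B_1n}}\Delta_{n}(\OneVar,\TwoVar)$, the first identity gives $S=1-a$, so the printed claim $1-\sqrt{a}=S\,(1+\sqrt{a})$ would force $(1+\sqrt{a})^2=1$, i.e.\ $\OneVar=0$. The intended statement clearly carries the factor $\big(1+\sqrt{\OneVar/(n\E{Y})}\big)^{-1}$ (equivalently, the factor $1+\sqrt{a}$ belongs on the left-hand side), and that corrected form is precisely what the paper uses later, cf.\ the derivation of \eqref{werfge} in the proof of Lemma~\ref{sdfgtrhnj}. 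Your conjugate argument $1-a=(1-\sqrt{a})(1+\sqrt{a})$ does establish this corrected identity, $1-\sqrt{a}=S/(1+\sqrt{a})$. However, your closing sentence — that the stated claim then follows ``after multiplying both sides by $1+\sqrt{\OneVar/(n\E{Y})}$'' — is not right: that multiplication merely reproduces the first identity, and no manipulation can yield the equality as printed, since it is false. So you should either flag the misprint explicitly and prove the version with the inverse factor, or restate the second identity as $\big(1-\sqrt{\OneVar/(n\E{Y})}\big)\big(1+\sqrt{\OneVar/(n\E{Y})}\big)=S$; as written, your proof ends by asserting an identity that does not hold.
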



\begin{lemma}\label{asdfhjnmkgh}
We have the identity
\begin{equation*}
\begin{aligned}
1-\frac{\TwoVar}{n\E{T}}&=\frac{1}{\sqrt{B_1}\sqrt{n}}\bigg(\sqrt{B_4}\Lambda_{n}(\OneVar,\TwoVar)-\frac{B_2}{\E{T}}\Delta_{n}(\OneVar,\TwoVar)
\bigg).
\end{aligned}
\end{equation*}
\end{lemma}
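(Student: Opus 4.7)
The plan is a direct algebraic verification, entirely parallel to the proofs of Lemmas~\ref{t7i7ityo7oi} and~\ref{asdffgfh}. The statement is an identity in the formal variables $\OneVar$, $\TwoVar$ and $n$ once one substitutes the definitions \eqref{wqerertjr4} of $\Lambda_{n}$ and $\Delta_{n}$ together with $B_1=(\E{T})^{2}\D{Y}+(\E{Y})^{2}\D{T}$, $B_2=\E{Y}\D{T}$, $B_3=\E{T}\D{Y}$, $B_4=\D{Y}\D{T}$.

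First, I would compute the right-hand side by forming the specified linear combination. Substituting the definitions, one has
\begin{equation*}
\sqrt{B_{4}}\,\Lambda_{n}(\OneVar,\TwoVar)-\frac{B_{2}}{\E{T}}\,\Delta_{n}(\OneVar,\TwoVar)
=\frac{1}{\sqrt{B_{1}n}}\Bigl[B_{1}n-B_{2}\OneVar-B_{3}\TwoVar-\frac{B_{2}\E{Y}}{\E{T}}\TwoVar+B_{2}\OneVar\Bigr].
\end{equation*}
The $\OneVar$-terms cancel, so the task reduces to identifying the coefficient of $\TwoVar$. The key arithmetical observation, which is the only nontrivial point, is that
\begin{equation*}
B_{3}+\frac{B_{2}\E{Y}}{\E{T}}=\E{T}\D{Y}+\frac{(\E{Y})^{2}\D{T}}{\E{T}}=\frac{(\E{T})^{2}\D{Y}+(\E{Y})^{2}\D{T}}{\E{T}}=\frac{B_{1}}{\E{T}}.
\end{equation*}
Plugging this in, the bracket collapses to $B_{1}n-(B_{1}/\E{T})\TwoVar=B_{1}n\bigl(1-\TwoVar/(n\E{T})\bigr)$, so the whole expression equals $\sqrt{B_{1}}\sqrt{n}\bigl(1-\TwoVar/(n\E{T})\bigr)$. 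Dividing by $\sqrt{B_{1}}\sqrt{n}$ yields the stated identity.

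There is effectively no obstacle: the only conceptual content is the cancellation of the $\OneVar$-contribution and the factorisation of $B_{1}/\E{T}$ from the remaining $\TwoVar$-terms. This mirrors the companion identity in Lemma~\ref{t7i7ityo7oi}, which is obtained by the symmetric combination $\sqrt{B_{4}}\Lambda_{n}+(B_{3}/\E{Y})\Delta_{n}$ and the dual cancellation of $\TwoVar$. A one-line proof simply noting straightforward verification via the definitions \eqref{wqerertjr4} is therefore entirely adequate.
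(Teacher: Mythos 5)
Your verification is correct: substituting the definitions \eqref{wqerertjr4}, the $\OneVar$-terms cancel and the identity $B_3+B_2\E{Y}/\E{T}=B_1/\E{T}$ collapses the bracket to $B_1n\bigl(1-\TwoVar/(n\E{T})\bigr)$, exactly as claimed. The paper omits the proof precisely because it is this straightforward check, so your argument coincides with the intended one.
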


\begin{remark}
The identities of Lemmas \ref{erwtjytk}--\ref{asdfhjnmkgh} in a more general
form were proved and used first in \citeNP{[Malinovskii 1993]}.
\end{remark}


\begin{lemma}
We have the identities
\begin{equation*}
\begin{aligned}
\mathcal{Y}_{n}(\OneVar)&=-\frac{1}{n^{2}\E{Y}\sqrt{\D{Y}B_1}}
\bigg(\sqrt{B_4}\Lambda_{n}(\OneVar,\TwoVar)
+\frac{B_3}{\E{Y}}\Delta_{n}(\OneVar,\TwoVar)\bigg),
\\[6pt]
\mathcal{T}_{n}(\TwoVar)&=-\frac{1}{n^2\E{T}\sqrt{\D{T}B_1}}
\bigg(\sqrt{B_4}\Lambda_{n}(\OneVar,\TwoVar)-\frac{B_2}{\E{T}}\Delta_{n}(\OneVar,\TwoVar)
\bigg).
\end{aligned}
\end{equation*}
\end{lemma}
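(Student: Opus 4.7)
The identities are purely algebraic; no probabilistic machinery is needed beyond the definitions in \eqref{wqerertjr4}. The plan is to substitute the explicit expressions for $\Lambda_n(\OneVar,\TwoVar)$ and $\Delta_n(\OneVar,\TwoVar)$ into the right-hand sides and simplify until the standardized sums $\mathcal{Y}_n(\OneVar)$ and $\mathcal{T}_n(\TwoVar)$ emerge. In this respect the argument parallels Lemmas~\ref{erwtjytk} and~\ref{asdfhjnmkgh}, and the statement itself may be viewed as the $z=0$ specialization of Lemma~\ref{ewrgfwdgsgw}, a useful cross-check.

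For the first identity I would form the combination $\sqrt{B_4}\Lambda_n(\OneVar,\TwoVar)+(B_3/\E{Y})\Delta_n(\OneVar,\TwoVar)$. After extracting the common factor $1/\sqrt{B_1 n}$, the $\TwoVar$-terms cancel: the summand $-B_3\TwoVar$ coming from $\Lambda_n$ is exactly matched by $B_3\TwoVar$ coming from $(B_3/\E{Y})\Delta_n$. The surviving $\OneVar$-coefficient collapses by the algebraic identity
\[
B_2+\frac{(\E{T})^2\D{Y}}{\E{Y}}=\frac{\E{Y}\cdot\E{Y}\D{T}+(\E{T})^2\D{Y}}{\E{Y}}=\frac{B_1}{\E{Y}},
\]
which follows immediately from the definitions $B_1=(\E{T})^2\D{Y}+(\E{Y})^2\D{T}$ and $B_2=\E{Y}\D{T}$. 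What remains is a multiple of $(n\E{Y}-\OneVar)$, and since by definition $\OneVar-n\E{Y}=\sqrt{n\D{Y}}\,\mathcal{Y}_n(\OneVar)$, the claimed proportionality is read off after dividing by the stated coefficient.

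The second identity is obtained by the mirror-image computation: one forms $\sqrt{B_4}\Lambda_n(\OneVar,\TwoVar)-(B_2/\E{T})\Delta_n(\OneVar,\TwoVar)$; the $\OneVar$-contributions cancel, and the $\TwoVar$-coefficient contracts via the symmetric identity $B_3+(\E{Y})^2\D{T}/\E{T}=B_1/\E{T}$, leaving a constant multiple of $\TwoVar-n\E{T}=\sqrt{n\D{T}}\,\mathcal{T}_n(\TwoVar)$.

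The main obstacle is nothing deeper than careful bookkeeping of the many square roots and of the symbols $B_1,B_2,B_3,B_4$; once the two contraction identities above are noted, everything collapses in a few lines. As a sanity check one can compare the outcome with Lemma~\ref{ewrgfwdgsgw} at $z=0$, $\OneVar=(u+cv)(1+x)$, $\TwoVar=(u+cv)x/c$, which produces the same combinations on the right and confirms the coefficients.
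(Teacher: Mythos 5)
Your plan is the right one: the paper states this lemma without proof (like the neighbouring identities in Section~\ref{werwhteyjn}, it is meant as a straightforward check from the definitions \eqref{wqerertjr4}), and the two cancellations plus the two contraction identities $B_2+B_3\E{T}/\E{Y}=B_1/\E{Y}$ and $B_3+B_2\E{Y}/\E{T}=B_1/\E{T}$ are exactly what make the right-hand sides collapse. But you should not wave at the final step with ``read off after dividing by the stated coefficient'': carried to the end, your computation gives
\begin{equation*}
\sqrt{B_4}\,\Lambda_{n}(\OneVar,\TwoVar)+\frac{B_3}{\E{Y}}\Delta_{n}(\OneVar,\TwoVar)
=-\frac{B_1}{\E{Y}}\,\frac{\OneVar-n\E{Y}}{\sqrt{B_1n}}
=-\frac{\sqrt{\D{Y}B_1}}{\E{Y}}\,\mathcal{Y}_{n}(\OneVar),
\end{equation*}
and symmetrically $\sqrt{B_4}\,\Lambda_{n}(\OneVar,\TwoVar)-\frac{B_2}{\E{T}}\Delta_{n}(\OneVar,\TwoVar)=-\frac{\sqrt{\D{T}B_1}}{\E{T}}\,\mathcal{T}_{n}(\TwoVar)$. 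The prefactors are therefore $-\E{Y}/\sqrt{\D{Y}B_1}$ and $-\E{T}/\sqrt{\D{T}B_1}$, with no power of $n$ surviving; the factors $1/(n^{2}\E{Y})$ and $1/(n^{2}\E{T})$ in the printed statement are misprints. Your own sanity check would have exposed this: the $z=0$ specialization of Lemma~\ref{ewrgfwdgsgw} carries exactly the coefficients $-\E{Y}/\sqrt{\D{Y}B_1}$ and $\E{T}/\sqrt{\D{T}B_1}$ (the latter with the bracket written with opposite sign), not the ones printed here. So the method is sound and complete, but as written your argument claims to land on an identity that the algebra does not in fact produce; you should state the corrected coefficients explicitly rather than defer to the stated ones.
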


\subsection{Approximation of integral sum by the corresponding integral}\label{ertklyuty}

\begin{lemma}\label{qaerfgwehg}
Let the function $f$ be differentiable sufficient number of times. We have
\begin{equation*}
\sum_{i=1}^{\nu-1}f(\xi_{i})(\xi_{i+1}-\xi_{i})=\int_{\xi_{1}}^{\xi_{\nu}}f(z)dz
-\frac{1}{2}\sum_{i=1}^{\nu-1}f^{\prime}(\xi_{i})(\xi_{i+1}-\xi_{i})^2
-\frac{1}{6}\sum_{i=1}^{\nu-1}f^{\prime\prime}(\xi_{i})(\xi_{i+1}-\xi_{i})^3+\dots.
\end{equation*}
\end{lemma}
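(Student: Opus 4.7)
The plan is to reduce the identity to an elementary piecewise Taylor expansion followed by summation. On each subinterval $[\xi_i,\xi_{i+1}]$, under the assumed smoothness of $f$, I would expand the integrand at the left endpoint, writing $f(z)=f(\xi_i)+f'(\xi_i)(z-\xi_i)+\tfrac12 f''(\xi_i)(z-\xi_i)^2+\tfrac16 f'''(\xi_i)(z-\xi_i)^3+\cdots$ for $z\in[\xi_i,\xi_{i+1}]$.

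Next, I would integrate this expansion term by term over $[\xi_i,\xi_{i+1}]$. Since the Taylor coefficients $f^{(k)}(\xi_i)$ are constants with respect to $z$ and $\int_{\xi_i}^{\xi_{i+1}}(z-\xi_i)^k\,dz=(\xi_{i+1}-\xi_i)^{k+1}/(k+1)$, this yields the single-subinterval identity
\begin{equation*}
\int_{\xi_i}^{\xi_{i+1}} f(z)\,dz = f(\xi_i)(\xi_{i+1}-\xi_i)+\tfrac12 f'(\xi_i)(\xi_{i+1}-\xi_i)^2+\tfrac16 f''(\xi_i)(\xi_{i+1}-\xi_i)^3+\cdots.
\end{equation*}
Summing these equalities over $i=1,\dots,\nu-1$ and using additivity of the integral to collapse the left-hand side to $\int_{\xi_1}^{\xi_\nu} f(z)\,dz$, I would then isolate the zeroth-order sum $\sum_{i=1}^{\nu-1}f(\xi_i)(\xi_{i+1}-\xi_i)$ on one side, sending the higher-order sums to the other side with reversed signs. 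This produces exactly the claimed expansion.

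The only delicate point is to give rigorous meaning to the trailing $+\cdots$. For the intended use of this lemma in Section~\ref{ergwerghwergh}--\ref{sdfghjtyk} (where it is applied to smooth integrand-like expressions such as the integral sums appearing around \eqref{srdtghnf}), it suffices to truncate the Taylor expansion at any fixed order $N$ and keep a Lagrange-form remainder $R_N(z)=\frac{f^{(N+1)}(\zeta_i(z))}{(N+1)!}(z-\xi_i)^{N+1}$. After integration this contributes a term of order $(\xi_{i+1}-\xi_i)^{N+2}$ per subinterval, and after summation the total remainder is bounded by $\sup|f^{(N+1)}|$ times the $(N+2)$-th order sum of mesh increments. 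Under the smoothness hypothesis on $f$, this controls the tail, so the displayed equality is to be read as an asymptotic expansion in the mesh size. This verification is routine; the main (mild) obstacle is only a careful bookkeeping of this remainder when the lemma is invoked with $\xi_i=\Lambda_n(u+cv,x)$, where the increments $\xi_{i+1}-\xi_i$ are of order $n^{-1/2}$ by Lemma~\ref{drthyrtgfjhfg}, so that the higher-order corrections are indeed of the allowed order of smallness $\underline{O}(\ln(u+cv)/(u+cv)^2)$.
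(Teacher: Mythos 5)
Your proposal is correct and follows essentially the same route as the paper: a Taylor expansion of $f$ at the left endpoint of each subinterval, term-by-term integration giving the per-subinterval identity, summation over $i$ with additivity of the integral, and rearrangement to isolate the zeroth-order sum. Your additional remark on truncating at a fixed order with a Lagrange remainder merely makes precise the meaning of the trailing dots, which the paper leaves implicit.
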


\begin{proof}
By Taylor's formula, we have
\begin{equation*}
f(z)=f(\xi_{i})+f^{\prime}(\xi_{i})(z-\xi_{i})+\frac12f^{\prime\prime}(\xi_{i})(z-\xi_{i})^2+\dots.
\end{equation*}
Integrating it, we have
\begin{multline*}
\int_{\xi_{i}}^{\xi_{i+1}}f(z)dz
=f(\xi_{i})\underbrace{\int_{\xi_{i}}^{\xi_{i+1}}dz}_{(\xi_{i+1}-\xi_{i})}+f^{\prime}(\xi_{i})
\underbrace{\int_{\xi_{i}}^{\xi_{i+1}}(z-\xi_{i})dz}_{\frac12(\xi_{i+1}-\xi_{i})^2}
+\frac{1}{2}f^{\prime\prime}(\xi_{i})
\underbrace{\int_{\xi_{i}}^{\xi_{i+1}}(z-\xi_{i})^2dz}_{\frac{1}{3}(\xi_{i+1}-\xi_{i})^3}+\dots
\\
=f(\xi_{i})(\xi_{i+1}-\xi_{i})+\frac{1}{2}f^{\prime}(\xi_{i})(\xi_{i+1}-\xi_{i})^2
+\frac{1}{6}f^{\prime\prime}(\xi_{i})(\xi_{i+1}-\xi_{i})^3+\dots.
\end{multline*}
Finally, we obtain
\begin{multline*}
\int_{\xi_{1}}^{\xi_{n}}f(z)dz=\sum_{i=1}^{\nu-1}\int_{\xi_{i}}^{\xi_{i+1}}f(z)dz
=\sum_{i=1}^{\nu-1}f(\xi_{i})(\xi_{i+1}-\xi_{i})
\\
+\frac{1}{2}\sum_{i=1}^{\nu-1}f^{\prime}(\xi_{i})(\xi_{i+1}-\xi_{i})^2
+\frac{1}{6}\sum_{i=1}^{\nu-1}f^{\prime\prime}(\xi_{i})(\xi_{i+1}-\xi_{i})^3+\dots,
\end{multline*}
which is required.
\end{proof}

\subsection{Modified Bessel function of the second kind}\label{sdrthjrnm}

Modified Bessel function of the second kind of order $\nu$, or Macdonald
function\footnote{There exists a significant discrepancy in names of these
functions. For example, quoting \citeNP{[Magnus Oberhettinger 1953]}, we see
``modified Hankel function'' (p.~3), ``modified Bessel function of the third
kind or Basset's function (although the present definition is due to
Macdonald)'' (p.~5).}, is defined in \citeNP{[Magnus Oberhettinger 1953]} as
\begin{equation*}
\BesselK{\nu}(z)=\frac{\pi}{2\sin(\nu\pi)}[\BesselI{-\nu}(z)-\BesselI{\nu}(z)],
\end{equation*}
where $\BesselI{\nu}(z)$ is the modified Bessel function of the first kind of
order $\nu$. It immediately follows that $\BesselK{\nu}(z)=\BesselK{-\nu}(z)$.

It has an integral representation (see, e.g., \citeNP{[Gradshtein 1980]},
formula 8.432 (6))
\begin{equation*}
\BesselK{\nu}(z)=\frac{1}{2}\Big(\frac{z}{2}\Big)^{\nu}\int^{\infty}_{0}\frac{1}{t^{\nu+1}}
\exp\bigg\{-\Big(t+\frac{z^2}{4t}\Big)\bigg\}\,dt.
\end{equation*}
This equation is checked, e.g., in \citeNP{[Glasser 2012]}.

It is well known (see, e.g., \citeNP{[Gradshtein 1980]}, formula 8.432 (7) and
\citeNP{[Magnus Oberhettinger 1953]}, \S~7.12 formula (23) on p.~82) that for
$x>0$ and $z>0$
\begin{equation}\label{wergetrh}
\BesselK{\nu}(xz)=\frac{z^{\nu}}{2}\int_{0}^{\infty}\frac{1}{t^{\nu+1}}
\exp\Bigg\{-\frac{x}{2}\bigg(t+\frac{z^2}{t}\bigg)\Bigg\}\,dt.
\end{equation}
In particular, for $x>0$, $z>0$, and $\nu=\frac{1}{2}$, we have
\begin{equation}\label{ertgerhr}
\BesselK{\frac12}(xz)\eqOK\frac{z^{1/2}}{2}\int_{0}^{\infty}\frac{1}{t^{3/2}}
\exp\Bigg\{-\frac{x}{2}\bigg(t+\frac{z^2}{t}\bigg)\Bigg\}\,dt
\eqOK\sqrt{\frac{\pi}{2zx}}\exp\big\{-xz\big\}.
\end{equation}


\begin{lemma}[Hankel's power series expansion]\label{sdrjrnm}
We have
\begin{multline*}
\BesselK{\alpha}(z)=\frac{\sqrt{\pi}}{\sqrt{2z}}e^{-z}\Bigg(1+\frac{4\alpha^2-1}{8z}
+\frac{(4\alpha^2-1)(4\alpha^2-9)}{2!(8z)^2}
\\
+\frac{(4\alpha^2-1)(4\alpha^2-9)(4\alpha^2-25)}{3!(8z)^3}+\dots\Bigg).
\end{multline*}
\end{lemma}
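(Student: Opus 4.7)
\medskip

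\textbf{Proposal.} The plan is to derive the expansion by starting from an integral representation of $\BesselK{\alpha}(z)$ in which the leading behaviour $\sqrt{\pi/(2z)}\,e^{-z}$ has already been factored out, then expanding the remaining integrand as a binomial series in $u/(2z)$ and integrating term by term against $e^{-u}$.

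First, I would transform the integral representation \eqref{wergetrh} given above (or the equivalent form $\BesselK{\alpha}(z)=\int_0^\infty e^{-z\cosh\tau}\cosh(\alpha\tau)\,d\tau$) into the shape
\begin{equation*}
\BesselK{\alpha}(z)=\sqrt{\frac{\pi}{2z}}\,\frac{e^{-z}}{\Gamma(\alpha+\tfrac12)}\int_{0}^{\infty}e^{-u}\,u^{\alpha-1/2}\Big(1+\frac{u}{2z}\Big)^{\alpha-1/2}du,
\end{equation*}
valid for $\operatorname{Re}(\alpha+\tfrac12)>0$ and $\operatorname{Re}z>0$; the necessary change of variable in \eqref{wergetrh} is of the form $t=z(1+s)/2$ followed by $s=u/z$, which produces the factor $e^{-z}$ and the binomial $(1+u/(2z))^{\alpha-1/2}$. (The case $\operatorname{Re}(\alpha+\tfrac12)\leqslant 0$ follows from $\BesselK{\alpha}=\BesselK{-\alpha}$.)

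Second, I would apply Taylor's theorem to the binomial factor,
\begin{equation*}
\Big(1+\frac{u}{2z}\Big)^{\alpha-1/2}=\sum_{k=0}^{N-1}\binom{\alpha-1/2}{k}\Big(\frac{u}{2z}\Big)^{k}+R_{N}(u,z),
\end{equation*}
and integrate term by term, using $\int_{0}^{\infty}e^{-u}u^{\alpha-1/2+k}du=\Gamma(\alpha+\tfrac12+k)$. The $k$-th term then becomes
\begin{equation*}
\binom{\alpha-1/2}{k}\,\frac{1}{(2z)^{k}}\cdot\frac{\Gamma(\alpha+\tfrac12+k)}{\Gamma(\alpha+\tfrac12)}.
\end{equation*}
The combinatorial simplification is the routine identity
\begin{equation*}
\binom{\alpha-1/2}{k}\frac{\Gamma(\alpha+\tfrac12+k)}{\Gamma(\alpha+\tfrac12)}=\frac{1}{k!\,4^{k}}\prod_{j=1}^{k}\bigl(2\alpha-(2j-1)\bigr)\bigl(2\alpha+(2j-1)\bigr)=\frac{\prod_{j=1}^{k}(4\alpha^{2}-(2j-1)^{2})}{k!\,4^{k}},
\end{equation*}
which combined with the extra $(2z)^{-k}$ produces exactly the denominators $k!(8z)^{k}$ of the claimed expansion. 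For $k=0,1,2,3$ this gives the first four displayed coefficients.

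The main obstacle is that the stated series is (for generic $\alpha$) divergent and must be understood as an asymptotic expansion: the proof must therefore bound the remainder $R_{N}(u,z)$ after integration and show the truncation error is $\underline{O}(z^{-N})$ as $z\to\infty$. This is handled by the standard Taylor estimate $|R_{N}(u,z)|\leqslant C_{\alpha,N}(u/(2z))^{N}\bigl(1+u/(2z)\bigr)^{\operatorname{Re}\alpha-1/2-N}$ on $u\geqslant 0$, $\operatorname{Re}z>0$, which, multiplied by $e^{-u}u^{\alpha-1/2}$ and integrated, yields a bound of the form $C\,\Gamma(\operatorname{Re}\alpha+\tfrac12+N)/(2z)^{N}$, i.e.\ the desired $\underline{O}(z^{-N})$ behaviour uniform in $|\arg z|\leqslant\pi/2-\delta$. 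Once this remainder estimate is in place, the conclusion of the lemma follows by collecting the terms above. Note that when $\alpha-\tfrac12$ is a non-negative integer the binomial series terminates, so that the expansion becomes an exact finite identity; this recovers the closed forms of $\BesselK{1/2}$, $\BesselK{3/2}$, $\BesselK{5/2}$ used earlier in the paper and provides a direct consistency check.
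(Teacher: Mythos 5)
Your argument is correct in substance, but note first that the paper itself gives no proof of this lemma: Hankel's expansion is quoted as a classical fact (with Abramowitz--Stegun, Watson, and Whittaker--Watson as the implicit sources), and only the terminating cases $\alpha=\tfrac12,\tfrac32,\tfrac52$ are ever used. So there is nothing in the paper to compare against; what you supply is the standard textbook derivation, and its two computational cores are right: the coefficient identity
\begin{equation*}
\binom{\alpha-1/2}{k}\,\frac{\Gamma(\alpha+\tfrac12+k)}{\Gamma(\alpha+\tfrac12)}
=\frac{1}{k!\,4^{k}}\prod_{j=1}^{k}\bigl(4\alpha^{2}-(2j-1)^{2}\bigr),
\end{equation*}
which together with $(2z)^{-k}$ gives the denominators $k!\,(8z)^{k}$, and the Lagrange-form remainder bound. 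Your remark that the displayed ``$=$'' must be read as an asymptotic expansion for generic $\alpha$, becoming an exact finite identity precisely when $\alpha-\tfrac12\in\{0,1,2,\dots\}$ (the only case the paper exploits, reproducing $\BesselK{1/2}$, $\BesselK{3/2}$, $\BesselK{5/2}$), is also the right way to make the lemma precise.

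One step is described too loosely to work as stated. Applying $t=z(1+s)/2$ to the integral representation \eqref{wergetrh} turns the exponent into $-z-\tfrac{zs^{2}}{2(1+s)}$: it extracts the factor $e^{-z}$ but does not produce $e^{-u}$ times the binomial factor $(1+u/(2z))^{\alpha-1/2}$, and the subsequent substitution $s=u/z$ does not repair this; likewise $\cosh\tau=1+u/z$ in the $\cosh$-representation leaves a factor $\cosh(\alpha\tau)$ that is not of binomial form. The representation you actually need as a starting point is the Poisson-type integral
\begin{equation*}
\BesselK{\alpha}(z)=\frac{\sqrt{\pi}\,(z/2)^{\alpha}}{\Gamma(\alpha+\tfrac12)}
\int_{1}^{\infty}e^{-zt}\,(t^{2}-1)^{\alpha-1/2}\,dt,
\qquad \mathrm{Re}\,\bigl(\alpha+\tfrac12\bigr)>0,\ \mathrm{Re}\,z>0,
\end{equation*}
from which the single substitution $t=1+u/z$ yields exactly your displayed formula, since $(t^{2}-1)^{\alpha-1/2}=(2u/z)^{\alpha-1/2}(1+u/(2z))^{\alpha-1/2}$. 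This representation is classical but is not an elementary change of variables away from \eqref{wergetrh}; it should be cited or derived separately (e.g.\ from the Hankel loop integral, or by verifying that the right-hand side satisfies the modified Bessel equation with the correct behaviour at infinity). With that substitution in place of the one you describe, the rest of your argument goes through.
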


They yield $\BesselK{1/2}(x)=\frac{\sqrt{\pi}}{\sqrt{2x}}e^{-x}$ (cf.
\eqref{ertgerhr}),
$\BesselK{3/2}(z)\eqOK\frac{\sqrt{\pi}}{\sqrt{2z}}e^{-z}(1+z^{-1})$,
$\BesselK{5/2}(z)\eqOK\frac{\sqrt{\pi}}{\sqrt{2z}}e^{-z}(1+3z^{-1}+3z^{-2})$,
and so on.

\subsection{Incomplete modified Bessel function and inverse Gaussian distribution}\label{sdgtjhrtjrtf}

Considering the integral in \eqref{ertgerhr} with arbitrary limits of
integration $0<\A<\B$, we introduce incomplete modified Bessel function of the
second kind of order $\frac12$
\begin{equation}\label{ewthtrjt}
\BesselK{\frac12}(xz\mid
\A,\B)=\frac{z^{1/2}}{2}\int_{\A}^{\B}\frac{1}{t^{3/2}}
\exp\Bigg\{-\frac{x}{2}\bigg(t+\frac{z^2}{t}\bigg)\Bigg\}\,dt.
\end{equation}

It is noteworthy that the integral in \eqref{ewthtrjt} is the same in the
expression for c.d.f. of inverse Gaussian distribution \eqref{wqrdtgrehr} with
shape parameter $\lambdaIG>0$ and mean parameter $\muIG>0$, and in the
expression for the incomplete modified Bessel function of the second kind of
order $\frac12$ introduced above. Indeed, we easily have
\begin{equation}\label{qwertgehrt}
\begin{aligned}
\int_{0}^{x}\frac{1}{t^{3/2}}
\exp\bigg\{-\frac{\lambdaIG}{2\muIG^2}\bigg(t+\frac{\muIG^2}{t}\bigg)\bigg\}dt
&=\exp\bigg\{-\frac{\lambdaIG}{\muIG}\bigg\}\Big(\frac{2\pi}{\lambdaIG}\Big)^{1/2}F\big(x;\muIG,\lambdaIG,-\tfrac{1}{2}\big)
\\
&=\frac{2}{\muIG^{1/2}}\BesselK{\frac12}\Big(\frac{\lambdaIG}{\muIG^2}\muIG\,\Big|\,0,x\Big),
\end{aligned}
\end{equation}
and
\begin{equation*}
\BesselK{\frac12}\Big(\frac{\lambdaIG}{\muIG^2}\muIG\,\Big|\,0,x\Big)
=\exp\bigg\{-\frac{\lambdaIG}{\muIG}\bigg\}\Big(\frac{\pi\muIG}{2\lambdaIG}\Big)^{1/2}F\big(x;\muIG,\lambdaIG,-\tfrac{1}{2}\big),
\end{equation*}
or vice versa
\begin{equation*}
F\big(x;\muIG,\lambdaIG,-\tfrac{1}{2}\big)
=\Big(\frac{2\lambdaIG}{\pi\muIG}\Big)^{1/2}\exp\bigg\{\frac{\lambdaIG}{\muIG}\bigg\}
\BesselK{\frac12}\Big(\frac{\lambdaIG}{\muIG^2}\muIG\,\Big|\,0,x\Big).
\end{equation*}

Making the change of variables $t=\tau^2$, $\tau=\sqrt{t}$, $2\tau d\tau=dt$ in
this integral, we have
\begin{equation*}
2\int_{0}^{\sqrt{x}}\frac{1}{\tau^2}
\exp\bigg\{-\frac{\lambdaIG}{2\muIG^2}\bigg(\tau^2+\frac{\muIG^2}{\tau^2}\bigg)\bigg\}\,d\tau.
\end{equation*}
Such integrals were studied in \citeNP{[Binet 1841]}.

\subsection{Binet's integrals}\label{rtgwerghwer}

\begin{theorem}[\citeNP{[Binet 1841]}]\label{wqertheyjre}
For $0<\A<\B$, we have
\begin{multline}\label{dtfghjftgj}
\int_{\sqrt{\A}}^{\sqrt{\B}}\frac{1}{\tau^2}
\exp\bigg\{-\frac{x}{2}\bigg(\tau^2+\frac{z^2}{\tau^2}\bigg)\bigg\}\,d\tau
\eqOK
2e^{xz}z^{-1/2}\int_{\sqrt{\A}+\frac{z}{\sqrt{\A}}}^{\sqrt{\B}+\frac{z}{\sqrt{\B}}}
\frac{e^{-\frac{x}{2}\tau^2}}{(\tau+\sqrt{\tau^2-4z})^2}d\tau
\\
+2e^{-xz}z^{-1/2}\int_{\sqrt{\A}-\frac{z}{\sqrt{\A}}}^{\sqrt{\B}-\frac{z}{\sqrt{\B}}}
\frac{e^{-\frac{x}{2}\tau^2}}{(\sqrt{\tau^2+4z}+\tau)^2}d\tau.
\end{multline}
\end{theorem}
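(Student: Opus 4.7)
The proof is a direct change-of-variables argument, most cleanly executed by differentiating both sides with respect to the upper limit. My plan is as follows.

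First, I would exploit the two algebraic identities
\begin{equation*}
\tau^2+\frac{z^2}{\tau^2}=\left(\tau-\frac{z}{\tau}\right)^{\!2}+2z=\left(\tau+\frac{z}{\tau}\right)^{\!2}-2z,
\end{equation*}
which factor the exponential as $\exp\{-\tfrac{x}{2}(\tau^2+z^2/\tau^2)\}=e^{\mp xz}\exp\{-\tfrac{x}{2}(\tau\pm z/\tau)^2\}$. These identities motivate the substitutions $u=\tau-z/\tau$ and $v=\tau+z/\tau$, and they explain the appearance of the prefactors $e^{\mp xz}$ together with the shifted limits $\sqrt{\A}\pm z/\sqrt{\A}$ and $\sqrt{\B}\pm z/\sqrt{\B}$ in the right-hand side.

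Second, I would invert each substitution by solving the quadratic for $\tau$. From $\tau^2-u\tau-z=0$ we get $\tau=(u+\sqrt{u^2+4z})/2$, hence $\tau^2=(\sqrt{u^2+4z}+u)^2/4$; analogously, for $\tau\geqslant\sqrt{z}$ one has $\tau=(v+\sqrt{v^2-4z})/2$ and $\tau^2=(v+\sqrt{v^2-4z})^2/4$. These two identities account for the precise denominators $(\tau+\sqrt{\tau^2-4z})^2$ and $(\sqrt{\tau^2+4z}+\tau)^2$ that appear on the right-hand side.

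Third, I would verify the identity by differentiation with respect to the upper limit $\sqrt{\B}$ (both sides vanishing at $\sqrt{\B}=\sqrt{\A}$). The chain rule gives $dv/d\sqrt{\B}=1-z/\B$ and $du/d\sqrt{\B}=1+z/\B$ for the two terms on the right, while at $\tau=\sqrt{\B}$ both denominators collapse to $4\B$ thanks to the clean equality $v+\sqrt{v^2-4z}=2\tau=\sqrt{u^2+4z}+u$. The prefactors $e^{\pm xz}$ cancel against those extracted from $\exp\{-\tfrac{x}{2}v^2\}$ and $\exp\{-\tfrac{x}{2}u^2\}$ via the factorization of Step 1, and the two contributions combine using $(1-z/\B)+(1+z/\B)=2$ to reproduce the derivative of the left-hand side, namely $\B^{-1}\exp\{-\tfrac{x}{2}(\B+z^2/\B)\}$.

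The only mild technical point is that the map $\tau\mapsto v=\tau+z/\tau$ is not monotone on $(0,\infty)$ (it turns around at $\tau=\sqrt{z}$), and on the interval $\tau<\sqrt{z}$ one has $\sqrt{v^2-4z}=z/\tau-\tau$ rather than $\tau-z/\tau$. The differentiation-based proof bypasses this issue, since $\int_a^b h(g(s))g'(s)\,ds=\int_{g(a)}^{g(b)}h(w)\,dw$ holds regardless of monotonicity; one simply reads the two branches of the square root with consistent signs. Once this is done, the identity \eqref{dtfghjftgj} drops out of one line of algebra.
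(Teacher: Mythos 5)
Your argument is correct in its core and runs on the same algebra as the paper's: the paper proves the formula by Binet's simultaneous substitution, writing $y=\tfrac12(u+x)$ with $u=y+zy^{-1}$, $x=y-zy^{-1}$, so that $dy=\tfrac12(du+dx)$, $y^2+z^2y^{-2}=u^2-2z=x^2+2z$ and $y^{-2}=4(u+x)^{-2}$, which splits the integral directly into the two right-hand terms, whereas you use the same two completions of the square and the same inverse relations but close the argument by differentiating in the upper limit $\sqrt{\B}$ and checking agreement (both sides vanishing at $\B=\A$). That execution is legitimate, and your computation (denominators collapsing to $4\B$, the factors $1\mp z/\B$ summing to $2$) is right where it applies. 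Note, however, that what you actually verify is the identity \emph{without} the factors $z^{-1/2}$ displayed in \eqref{dtfghjftgj}: with those factors the right-hand derivative would be $z^{-1/2}\B^{-1}\exp\{-\tfrac{x}{2}(\B+z^2/\B)\}$ and would not match the left-hand derivative. The paper's own ``Binet's observation'' (take $q=x/2$, $r=z$) carries no such factor, so the $z^{-1/2}$ in the display is evidently a misprint, but you should have flagged that you are proving the corrected formula rather than silently proving a different statement.

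The more substantive problem is your final remark that the differentiation argument ``bypasses'' the branch issue at $\tau=\sqrt{z}$. It does not. If the upper limit satisfies $\sqrt{\B}<\sqrt{z}$, then $\sqrt{v^2-4z}=z/\sqrt{\B}-\sqrt{\B}$, the first denominator collapses to $4z^2/\B$ rather than $4\B$, and the two derivative contributions no longer reproduce the left-hand derivative: the discrepancy equals $\tfrac12\exp\{-\tfrac{x}{2}(\B+z^2/\B)\}\,(\B-z)(z^{-2}-\B^{-2})$, which is nonzero for $\B\neq z$. The change-of-variables formula for non-monotone $g$ is irrelevant here, because the integrand of the first right-hand integral is a fixed function of $\tau$ with the principal square root and cannot ``remember'' which branch produced it; consequently the identity as literally written fails whenever the range $[\sqrt{\A},\sqrt{\B}]$ extends below $\sqrt{z}$, and your proof (like the paper's, whose step $y^{-2}=4(u+\sqrt{u^2-4z})^{-2}$ is valid only for $y\geqslant\sqrt{z}$) tacitly requires $\A\geqslant z$. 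You correctly identified the delicate point, but the justification you gave for dismissing it is wrong; the honest conclusion is that the theorem needs either the restriction $\A\geqslant z$ or a reinterpretation of the square-root branch on the lower range.
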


Bearing in mind two standard indefinite integrals
\begin{multline*}
\int\frac{e^{-qu^2}}{(u+\sqrt{u^2-4r})^2}du=-\frac{e^{-qu^2}u}{16qr^2}
+\frac{e^{-4qr-q(-4r+u^2)}\sqrt{-4r+u^2}}{16qr^2}
\\
+\frac{\sqrt{\pi}(2\UGauss{0}{1}(\sqrt{2q}u)-1)}{32q^{3/2}r^2}
-\frac{\sqrt{\pi}(2\UGauss{0}{1}(\sqrt{2q}u)-1)}{8\sqrt{q}r}
\\
-\frac{e^{-4qr}\sqrt{\pi} (2\UGauss{0}{1}(\sqrt{2q}\sqrt{-4r+u^2})-1)
}{32q^{3/2}r^2},
\end{multline*}
\begin{multline*}
\int\frac{e^{-qx^2}}{(\sqrt{x^2+4r}+x)^2}dx=-\frac{e^{-q x^2} x}{16 q
r^2}+\frac{e^{4qr-q(4r+x^2)}\sqrt{4r+x^2}}{16qr^2}
\\
+\frac{\sqrt{\pi}(2\UGauss{0}{1}(\sqrt{2q}x)-1)}{32q^{3/2}r^2}
+\frac{\sqrt{\pi}(2\UGauss{0}{1}(\sqrt{2q}x)-1)}{8\sqrt{q}r}
\\
-\frac{e^{4qr}\sqrt{\pi}(2\UGauss{0}{1}(\sqrt{2q}\sqrt{4r+x^2})-1)}{32q^{3/2}r^2},
\end{multline*}
which may be verified by direct differentiation, we express \eqref{dtfghjftgj}
in terms of c.d.f. of a standard normal distribution.

The proof of Theorem~\ref{wqertheyjre} is based on Binet's observation that,
\begin{multline*}
\int_{A}^{B}\frac{1}{y^2}e^{-q(y^2+r^2y^{-2})}dy\eqOK2e^{2qr}\int_{A+rA^{-1}}^{B+rB^{-1}}
\frac{e^{-qu^2}}{(u+\sqrt{u^2-4r})^2}du
\\
+2e^{-2qr}\int_{A-rA^{-1}}^{B-rB^{-1}}\frac{e^{-qx^2}}{(\sqrt{x^2+4r}+x)^2}dx.
\end{multline*}
Indeed, writing $y=\frac{1}{2}(u+x)$, where $u=y+ry^{-1}$ and $x=y-ry^{-1}$,
one has
\begin{equation*}
y^2+r^2y^{-2}\eqOK x^2+2r\eqOK u^2-2r,
\end{equation*}
which yields $e^{-q(y^2+r^2y^{-2})}\eqOK e^{-q(x^2+2r)}\eqOK e^{-q(u^2-2r)}$.
Since $dy=\frac{1}{2}dx+\frac{1}{2}du$ and
\begin{equation*}
\frac{1}{y^2}=4(u+x)^{-2}=4(u+\sqrt{u^2-4r})^{-2}=4(\sqrt{x^2+4r}+x)^{-2},
\end{equation*}
the proof becomes obvious.

\end{document}